\newcolumntype{C}[1]{>{\centering\hspace{0pt}}p{#1}}
\newcommand{\GL}{\mathrm{GL}}
\newcommand{\SO}{\mathrm{SO}}
\newcommand{\Spin}{\mathrm{Spin}}
\newcommand{\U}{\mathrm{U}}
\newcommand{\SU}{\mathrm{SU}}
\newcommand{\Sp}{\mathrm{Sp}}
\newcommand{\G}{\mathrm{G}}
\newcommand{\Id}{\mathrm{Id}}
\newcommand{\Ker}{\mathrm{Ker}}
\newcommand{\End}{\mathrm{End}}
\newcommand{\Sym}{\mathrm{Sym}}
\newcommand{\Z}{\mathbb{Z}}
\newcommand{\R}{\mathbb{R}}
\newcommand{\C}{\mathbb{C}}
\newcommand{\HH}{\mathbb{H}}
\newcommand{\RP}{\mathbb{RP}}
\newcommand{\CP}{\mathbb{CP}}
\newcommand{\HP}{\mathbb{HP}}
\newcommand{\Gr}{\mathrm{Gr}}
\newcommand{\vol}{\mathrm{vol}}
\newcommand{\Hol}{\mathrm{Hol}}
\newtheorem{thm}{Theorem}[section]
\newtheorem{prop}[thm]{Proposition}
\newtheorem{lem}[thm]{Lemma}
\newtheorem{cor}[thm]{Corollary}
\theoremstyle{definition}
\newtheorem{defn}[thm]{Definition}
\newtheorem{example}[thm]{Example}
\newtheorem{rmk}[thm]{Remark}
\numberwithin{equation}{section}
\title{Associative Submanifolds of \\ Squashed $3$-Sasakian Manifolds}
\author{Gavin Ball and Jesse Madnick}
\date{September 9, 2022}
\newcommand{\Addresses}
{{  \bigskip
		\textsc{University of Wisconsin-Madison} \par\nopagebreak
		\textsc{Department of Mathematics}\par\nopagebreak
		\textsc{Madison, WI, USA}\par\nopagebreak
		\texttt{gball3@wisc.edu} \\
		
		\bigskip
		\textsc{National Center for Theoretical Sciences} \par\nopagebreak
		\textsc{National Taiwan University}\par\nopagebreak
		\textsc{Taipei, Taiwan}\par\nopagebreak
		\texttt{jmadnick@ncts.ntu.edu.tw}
}}
\begin{document}

\maketitle

\begin{abstract}
Every compact $3$-Sasakian $7$-manifold $M$ admits a canonical $2$-parameter family of co-closed $\G_2$-structures $\varphi_{a,b}$ for $a,b > 0$, as well as a foliation by $\varphi_{a,b}$-associative $3$-folds whose leaf space $X$ is a positive quaternion-K\"{a}hler $4$-orbifold.  We prove that associative $3$-folds in $(M,\varphi_{a,b})$ that are ruled by a certain type of geodesic are in correspondence with pseudo-holomorphic curves in the almost-complex $8$-manifold $Z \times S^2$, where $Z$ is the twistor space of $X$ equipped with its strict nearly-K\"{a}hler structure. \\
\indent As an application, we construct infinitely many topological types of non-trivial, compact associative $3$-folds in the squashed $7$-spheres $(S^7, \varphi_{a,b})$ and squashed exceptional Aloff-Wallach spaces $(N_{1,1}, \varphi_{a,b})$.  Topologically, our examples are circle bundles over a genus $g$ surface, for any $g \geq 0$.
\end{abstract}

\tableofcontents


\section{Introduction}

\indent \indent Let $(M,\varphi)$ be a $7$-manifold with a $\G_2$-structure $\varphi$.  An \emph{associative $3$-fold} is a $3$-dimensional submanifold $N \subset M$ for which
$$\varphi|_N = \vol_N.$$
Associative submanifolds are conjectured to play an important role in defining an enumerative invariant of torsion-free $\G_2$-structures counting $\G_2$-instantons \cite{DonTho, DonSeg, walpuski2017g2, Joyce18Count, Hay19}. In this direction, it is desirable to have a large supply of associative submanifolds, especially examples that exist in continuous families of coclosed $\G_2$-structures $\varphi_t$. In general, associative $3$-folds are difficult to construct, making conjectures hard to test.  In this work, we construct examples of compact $3$-folds in squashed 3-Sasakian 7-manifolds $M$ that are associative with respect to a $2$-parameter family of $\G_2$-structures. \\
\indent A second motivation for this work comes from $\Spin(7)$ geometry.  Among the co-closed $\G_2$-structures, there is a distinguished class of \emph{nearly-parallel $\G_2$-structures}, defined by the property that their metric cone is a $\Spin(7)$-manifold.  Associative submanifolds in nearly-parallel $7$-manifolds $M$ provide models for conically singular Cayley manifolds in the cone $(\mathrm{C}(M), g_{\mathrm{C}}) = (\R^+ \times M, dr^2 + r^2g)$. \\
\indent Examples of associative submanifolds in nearly-parallel $7$-manifolds have been constructed by Fox \cite{fox2007cayley}, Lotay \cite{lotay2012associative}, Kawai \cite{kawai2015some}, and the authors \cite{BMBerger}.  Gauge theory on nearly-parallel $7$-manifolds has been studied by Ball and Oliveira \cite{BaOlAloffWallach}, Waldron \cite{waldron2020mathrm}, Lotay and Oliveira \cite{LoOl22}, and Singhal \cite{Singhal22}.  \\

\indent A Riemannian $7$-manifold $(M, g)$ is \emph{$3$-Sasakian} if its metric cone $(\text{C}(M), g_{\mathrm{C}})$ is hyperk\"{a}hler.  Compact examples include the $7$-sphere $S^7$, the Aloff-Wallach space $N_{1,1} = \SU(3)/S^1_{(1,1)}$, and the $7$-manifolds $S(p_1, p_2, p_3)$ constructed by Boyer-Galicki-Mann \cite{mann1994geometry}.  Since $\mathrm{C}(M)$ is hyperk\"{a}hler, it admits a triple $(I_1, I_2, I_3)$ of $g_{\mathrm{C}}$-orthogonal complex structures satisfying $I_1I_2 = I_3$ for which the associated $2$-forms $(\omega_1, \omega_2, \omega_3)$ are closed.  Therefore, viewing $M$ as the link $\{1\} \times M \subset \mathrm{C}(M)$, we see that $TM$ splits as
$$TM = \mathsf{A} \oplus \mathsf{C}, \ \ \mathsf{A} := \text{span}(I_1(\partial_r), I_2(\partial_r), I_3(\partial_r)), \ \ \mathsf{C} := \mathsf{A}^\perp.$$
On $M$, we define $1$-forms $\alpha_j := \omega_j(\partial_r, \cdot)$ and $2$-forms $\Omega_j := \omega_j|_{\mathsf{C}}$.
These yield a canonical $2$-parameter family of co-closed $\G_2$-structures on $M$ via
\begin{equation*}
    \varphi_{a,b} := a^3\,\alpha_{123} - ab^2\sum \alpha_j \wedge \Omega_j
\end{equation*}
whose induced Riemannian metrics are
$$g_{a,b} := \left.a^2 g\right|_{\mathsf{A}}+ \left.b^2g\right|_{\mathsf{C}}.$$
That $\varphi_{a,b}$ is co-closed follows from calculations in \cite[$\S$5]{friedrich1997nearly}.  See also \cite{agricola20103}. \\
\indent We refer to $(M, \varphi_{a,b})$ as a \emph{squashed $3$-Sasakian $7$-manifold}.  The family of co-closed $\G_2$-structures $\varphi_{a,b}$ exhibits some interesting features.  For example, $\varphi_{a,b}$ is nearly-parallel if and only if $b^2 = 5a^2$, and the metric $g_{1/\sqrt{5},1}$ is properly nearly-parallel.  Moreover, while $\varphi_{1,1}$ is not nearly-parallel, it is nevertheless \emph{isometric} to a nearly-parallel $\G_2$-structure $\widehat{\varphi}$ as its induced metric $g = g_{1,1}$ is $3$-Sasakian.  See $\S$\ref{sec:Squashed3Sas} for details. \\

\indent In this work, we study associative $3$-folds in $(M, \varphi_{a,b})$, where $M$ is compact.  The simplest examples are the \emph{canonical leaves}, i.e., those associatives that are tangent to $\mathsf{A}$.  Indeed, the distribution $\mathsf{A} \subset TM$ is integrable, yielding a \emph{canonical foliation} $\mathcal{F}_A$ of $M$.  The canonical leaves turn out to be totally-geodesic, $\varphi_{a,b}$-associative $3$-folds in $M$ with constant positive sectional curvature.  It is known that the leaf space $X := M/\mathcal{F}_A$ is a positive quaternion-K\"{a}hler $4$-orbifold, and that the projection $h \colon M \to X$ is a principal $\SU(2)$- or $\SO(3)$-orbibundle \cite[$\S$13.3.13]{boyer2007sasakian}.  For example, when $M = S^7$, the leaf space $X = S^4$, the map $h \colon S^7 \to S^4$ is the quaternionic Hopf fibration, and the canonical leaves are the Hopf $3$-spheres. \\
\indent To obtain new examples, we consider a more general geometric ansatz, studying those associatives in $(M,\varphi_{a,b})$ that are ruled by \emph{Hopf circles}, those closed geodesics that lie in a canonical leaf.  In other words, a Hopf circle is an orbit of the locally free $S^1$-action on $M$ generated by the the Reeb field $A_w := w_1 I_1(\partial_r) + w_2I_2(\partial_r) + w_3I_3(\partial_r)$, for any $w = (w_1, w_2, w_3) \in S^2$.  Each quotient space $M/\langle A_w\rangle$ is diffeomorphic to the twistor space $Z$ of $X$, and the projection $p_w \colon M \to Z$ is a principal $S^1$-orbibundle \cite[$\S$7.5.1, $\S$13.3.1]{boyer2007sasakian}.  For example, when $M = S^7$, we have $Z = \CP^3$, and each $p_w \colon S^7 \to \CP^3$ is a complex Hopf fibration. \\
\indent The space of Hopf circles in $M$ is diffeomorphic to $Z \times S^2$, so any Hopf-ruled $3$-dimensional submanifold of $M$ (not necessarily associative) can be viewed as a surface in $Z \times S^2$.  Conversely, any surface $\Sigma \subset Z \times S^2$ gives a corresponding Hopf-ruled $3$-fold $\Gamma(\Sigma) \subset M$.  We then ask which surfaces in $Z \times S^2$ yield \emph{associative} Hopf-ruled $3$-folds in $M$.  The Correspondence Theorem \ref{thm:MainCorrespondence} provides the answer: it is the $J$-holomorphic curves in $Z \times S^2$.  Here, $J$ is the product almost-complex structure on $Z \times S^2$, where $Z$ carries its standard strict nearly-K\"{a}hler structure. \\
\indent Recalling that that $Z \to X$ is an $S^2$-bundle, we view $Z \times S^2 \to X$ as an $(S^2 \times S^2)$-bundle in the obvious way.  We can now state the result:

\begin{thm}[Correspondence Theorem] \label{thm:MainCorrespondence} Let $(M^7, \varphi_{a,b})$ be a compact squashed $3$-Sasakian $7$-manifold, where $a,b > 0$ are fixed.  Let $Z$ be the twistor space of $X := M/\mathcal{F}_A$.
\begin{enumerate}[label=(\alph*)]
\item Every Hopf-ruled associative $3$-fold in $(M^7, \varphi_{a,b})$ is locally of the form $\Gamma(\Sigma)$ for some $J$-holomorphic curve $\Sigma \subset Z \times S^2$.
\item Let $\Sigma \subset Z \times S^2$ be a $J$-holomorphic curve.
\begin{enumerate}[label=\arabic*.]
     \item If $\Sigma$ does not lie in an $(S^2 \times S^2)$-fiber, then there exists a discrete set $D \subset \Sigma$ such that $\Gamma(\Sigma - D) \subset M$ is a Hopf-ruled associative $3$-fold. 
    \item If $\Sigma$ lies in an $(S^2 \times S^2)$-fiber, then $\Gamma(\Sigma)$ is a subset of a canonical leaf.
\end{enumerate}
\end{enumerate}
\end{thm}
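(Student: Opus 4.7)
The strategy is to reduce associativity to a pointwise almost-complex condition on the $2$-plane transverse to the Hopf direction, then match it with the $J$-holomorphic condition on $Z \times S^2$. For the setup, consider the map $\pi \colon M \times S^2 \to Z \times S^2$, $(m, w) \mapsto (p_w(m), w)$: this is a principal $S^1$-bundle whose fibers are the oriented Hopf circles. Given a Hopf-ruled $3$-fold $N \subset M$, the requirement $A_{w(m)}(m) \in T_m N$ determines a locally smooth map $w : N \to S^2$, and the assignment $m \mapsto (p_{w(m)}(m), w(m))$ descends through the Hopf-leaf quotient to a local surface $\Sigma \subset Z \times S^2$. Conversely, a surface $\Sigma$ produces $\Gamma(\Sigma) := \mathrm{pr}_M(\pi^{-1}(\Sigma))$; generically $\mathrm{pr}_M|_{\pi^{-1}(\Sigma)}$ is an immersion, and the discrete set $D$ in part (b)(1) consists of the points where two distinct Hopf circles of $\Sigma$ coincide in $M$.

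Pointwise step: I would compute $\varphi_{a,b}(A_w, \cdot, \cdot)$ using the Introduction's formula for $\varphi_{a,b}$ together with $\alpha_j(A_k) = \delta_{jk}$, $\alpha_j|_{\mathsf{C}} = 0$, and $\Omega_j|_{\mathsf{C}}(X, Y) = g(I_j X, Y)$. Writing $A_u := \sum_j u_j A_j$ for $u \in \R^3$ and $I_w := \sum_j w_j I_j$, expanding the three-form should yield
\[
A_w \times_{a,b} v = -a\, I_w v \quad (v \in \mathsf{C}), \qquad A_w \times_{a,b} A_u = a\, A_{w \times u} \quad (u \in w^{\perp}),
\]
with no $\mathsf{A}$--$\mathsf{C}$ mixing. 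Since $|A_w|_{a,b} = a$, the operator $J_w := a^{-1} A_w \times_{a,b} (\cdot)$ is an almost-complex structure on $A_w(m)^{\perp} = (w^{\perp} \cap \mathsf{A}_m) \oplus \mathsf{C}_m$, acting as $-I_w$ on $\mathsf{C}_m$ and by the rotation $A_u \mapsto A_{w \times u}$ on $w^{\perp} \cap \mathsf{A}_m$. The standard $\G_2$-associative criterion then says: the $3$-plane $\R A_w(m) \oplus P$ is $\varphi_{a,b}$-associative if and only if $P \subset A_w(m)^{\perp}$ is a $J_w$-complex line.

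The core step is identifying the tangent spaces. For $m \in \pi^{-1}(z, w)$, computing the differential of $\pi$ gives
\[
\Phi \colon T_{(z, w)}(Z \times S^2) \twoheadrightarrow A_w(m)^{\perp}, \qquad (\xi_Z, \xi_{S^2}) \longmapsto (dp_w|_m)^{-1}(\xi_Z - \partial_w p_w|_m (\xi_{S^2})),
\]
where $dp_w|_m \colon A_w(m)^{\perp} \xrightarrow{\sim} T_z Z$ is the horizontal-lift isomorphism. This is the inverse of the derivative of the quotient $\pi^{-1}(\Sigma) \to \Sigma$, and so identifies $P_m := \Phi(T_{(z,w)}\Sigma)$ with $T_{(z,w)}\Sigma$. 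I would then verify that $\Phi$ intertwines the product almost-complex structure $J := J_Z \oplus j_{S^2}$ with $J_w$; this reduces to two facts: (i) $dp_w|_m$ intertwines $J_w$ with the strict nearly-K\"ahler structure $J_Z$ (which acts as $-I_w$ on horizontal $T_x X$ and as $90^{\circ}$-rotation on the twistor fiber); and (ii) the map $\partial_w p_w|_m \colon T_w S^2 \to T_z^{\mathrm{vert}} Z$, being the derivative of the holomorphic diffeomorphism $w \mapsto p_w(m)$ from $S^2$ onto the twistor fiber at $h(m)$, intertwines $j_{S^2}$ with the fiber-restriction of $J_Z$. The hard part will be the consistent bookkeeping of sign conventions between the cross-product computation, the strict nearly-K\"ahler structure on $Z$ (as opposed to the K\"ahler--Einstein one), and the identifications $T_w S^2 \cong w^{\perp} \subset \R^3$ and $T_z^{\mathrm{vert}} Z \cong w^{\perp} \cap \mathsf{A}_m$; the cross-term $-\partial_w p_w|_m(\xi_{S^2})$ in $\Phi$ is precisely what reconciles them.

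With the intertwining in hand, $P_m$ is $J_w$-invariant if and only if $T_{(z,w)}\Sigma$ is $J$-invariant, which proves (a) and (b)(1). For (b)(2), a fiber of $Z \times S^2 \to X$ over $x$ is (twistor sphere at $x$) $\times S^2$, and the Hopf circles corresponding to its points all lie in the canonical leaf $h^{-1}(x)$, so $\Gamma(\Sigma) \subset h^{-1}(x)$.
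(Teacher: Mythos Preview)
Your route is more geometric than the paper's and the cross-product computation is correct: associativity of $\R A_w \oplus P$ is indeed equivalent to $P \subset A_w^\perp$ being a $J_w$-complex line. But the argument breaks at the claimed equivalence ``$P_m$ is $J_w$-invariant if and only if $T_{(z,w)}\Sigma$ is $J$-invariant.'' Your $\Phi$ is a surjection with $J$-invariant kernel $K = \mathrm{graph}(\partial_w p)$, so the intertwining only yields that $P$ is $J_w$-invariant if and only if $J(T\Sigma) \subset T\Sigma + K$. This is strictly weaker than $J(T\Sigma) = T\Sigma$. Unwinding the definitions, $T\Sigma$ is $J$-invariant precisely when \emph{both} $P$ is $J_w$-invariant \emph{and} $dw|_P \colon (P, J_w) \to (T_w S^2, j_{S^2})$ is complex-linear. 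Associativity only gives the first condition pointwise. The paper obtains the second by differentiation: from $\pi_i \wedge \pi_j = 0$ for $i,j \le 3$ (your $J_w$-condition) it computes $d(\pi_1 \wedge \pi_2)$ and $d(\pi_1 \wedge \pi_3)$ via the structure equations to force $\pi_2 \wedge \pi_4 = \pi_3 \wedge \pi_4 = 0$, and then a linear-algebra step gives $\pi_1 \wedge \pi_4 = 0$. This is an integrability argument, not a pointwise one, and your plan omits it entirely.

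For (b)(1) there is a second gap: you assert $D$ is discrete without proof, and your description of $D$ (points where two Hopf circles coincide) is not the right one. The failure of $\Gamma$ to be an immersion occurs exactly where $\Sigma$ is tangent to the fibers of $q \colon Z \times S^2 \to X$, i.e., where the $\mathsf{C}$-components $\theta_1,\dots,\theta_4$ all vanish on $T\Sigma$. Discreteness of this locus is the analytic heart of the theorem: the paper shows that the pair $(W_2,W_3)$ with $\pi_2 = W_2\eta$, $\pi_3 = W_3\eta$ is a holomorphic section of a rank-$2$ bundle over $\Sigma$ (endowed with its Koszul--Malgrange holomorphic structure), so its zero set is discrete or all of $\Sigma$; the latter case is exactly (b)(2). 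Nothing in your outline supplies this holomorphicity, and without it there is no reason the degeneracy locus should be zero-dimensional.
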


\indent The proof of Theorem \ref{thm:MainCorrespondence}(b) gives more than the statement.  Indeed, let $\Sigma_g$ be a compact surface of genus $g \geq 0$.  In $\S$\ref{sec:Correspondence}, we observe that if $f \colon \Sigma_g \to Z$ is an immersed pseudo-holomorphic curve in $Z$ that is horizontal for the twistor fibration $\tau \colon Z \to X$, then for any holomorphic map $w \colon \Sigma_g \to S^2$, the surface $(f,w) \colon \Sigma_g \to Z \times S^2$ is a $J$-holomorphic curve that does not lie in an $(S^2 \times S^2)$-fiber and has $D = \O$.  Consequently, $\Gamma(f,w) \subset M$ is a compact associative $3$-fold in $M$ diffeomorphic to an $S^1$-bundle over $\Sigma_g$. \\

\indent As an application, we consider the case of $M = S^7$ equipped with any of its $\Sp(2)\Sp(1)$-invariant $\G_2$-structures $\varphi_{a,b}$.  For each $g \geq 0$, Bryant \cite{bryant1982conformal} has constructed an embedded horizontal pseudo-holomorphic curve $f \colon \Sigma_g \to \CP^3 = Z$.  Taking $w \colon \Sigma_g \to S^2$ to be a constant map $w = w_0$ yields the \emph{trivial} associative $3$-fold $\Gamma(f,w_0) = p_{w_0}^{-1}(f(\Sigma_g)) \subset S^7$.  On the other hand, taking $w$ to be a non-constant holomorphic map yields the first non-trivial compact $\varphi_{a,b}$-associative $3$-folds in $S^7$.  That is:

\begin{thm} \label{thm:AssocSphere} Fix $a,b > 0$.  For every $g \geq 0$, there exists a non-trivial compact associative $3$-fold in $(S^7, \varphi_{a,b})$ diffeomorphic to an $S^1$-bundle over a genus $g$ surface.
\end{thm}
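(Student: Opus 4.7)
The plan is to apply Theorem \ref{thm:MainCorrespondence}(b) together with the explicit observation stated immediately after it, using Bryant's horizontal pseudo-holomorphic curves in $\CP^3$ as input. First, I would recall the specific geometry: for $M = S^7$ with its canonical $3$-Sasakian structure, the leaf space is $X = S^4$ and its twistor space is $Z = \CP^3$, with the twistor fibration $\tau \colon \CP^3 \to S^4$ being the standard one. Bryant's work \cite{bryant1982conformal} then furnishes, for every $g \geq 0$, a compact Riemann surface $\Sigma_g$ of genus $g$ together with an embedded $\tau$-horizontal pseudo-holomorphic immersion $f \colon \Sigma_g \to \CP^3$.

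Next, I would produce the non-constant factor $w \colon \Sigma_g \to S^2 = \CP^1$. For $g = 0$, take $w = \mathrm{id}_{S^2}$; for $g \geq 1$, existence of a non-constant holomorphic (i.e., meromorphic) map to $\CP^1$ is classical for any compact Riemann surface. The product $(f,w) \colon \Sigma_g \to Z \times S^2$ is then pseudo-holomorphic for the product almost-complex structure $J$ on $Z \times S^2$, because both factors are. Since $f$ is $\tau$-horizontal and $w$ is non-constant, the image $(f,w)(\Sigma_g)$ projects submersively to the base $X = S^4$ on an open dense set, so it does not lie in any $(S^2 \times S^2)$-fiber. By the observation highlighted just before the statement of the theorem (horizontality of $f$ together with holomorphicity of $w$ forces the exceptional set $D$ to be empty), Theorem \ref{thm:MainCorrespondence}(b1) applies with $D = \O$ and yields a compact $\varphi_{a,b}$-associative $3$-fold $\Gamma(f,w) \subset S^7$.

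To identify the topology, I would note that $\Gamma(f,w)$ is by construction the union of the Hopf circles parametrized by the points of $\Sigma_g$ via $(f,w)$. The map $\Gamma(f,w) \to \Sigma_g$ sending each circle to its base is a smooth $S^1$-fiber bundle, which establishes the claimed diffeomorphism type. For non-triviality, I would argue that a trivial associative, being of the form $\Gamma(f,w_0) = p_{w_0}^{-1}(f(\Sigma_g))$ with $w_0 \in S^2$ constant, consists entirely of Reeb orbits of the single vector field $A_{w_0}$. Since $w$ is non-constant, the $3$-fold $\Gamma(f,w)$ contains Hopf circles lying in infinitely many distinct canonical leaves and along infinitely many distinct Reeb directions, so it cannot coincide with any $p_{w_0}^{-1}$-preimage.

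The main potential obstacle is verifying $D = \O$ and that $(f,w)$ avoids the $(S^2 \times S^2)$-fibers, both of which rely on the $\tau$-horizontality of Bryant's curves; once these structural properties are in hand, the rest of the argument is a direct invocation of the Correspondence Theorem together with the standard existence of meromorphic functions on compact Riemann surfaces. A minor secondary point is checking that the resulting $S^1$-bundle structure on $\Gamma(f,w)$ is genuinely smooth (not merely topological), but this follows since each Hopf circle is a regular orbit and the family $(f,w)$ is an immersion into $Z \times S^2$.
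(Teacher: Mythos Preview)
Your proposal is correct and follows essentially the same route as the paper: invoke Bryant's horizontal pseudo-holomorphic curves $f \colon \Sigma_g \to \CP^3$, pair them with a non-constant holomorphic $w \colon \Sigma_g \to S^2$, and apply the Correspondence Theorem together with the horizontality observation (Proposition~\ref{prop:HorzHolo} and Corollary~\ref{cor:MainCorollary}) to conclude $D = \varnothing$ and obtain a compact Hopf-ruled associative. Your treatment is slightly more explicit than the paper's on the existence of non-constant $w$, the $S^1$-bundle structure, and the non-triviality argument, but the underlying strategy is identical.
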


\noindent Note that non-trivial compact $\widehat{\varphi}$-associative $3$-folds in $S^7$ have been constructed in \cite{fox2007cayley} and \cite{lotay2012associative}, where $\widehat{\varphi}$ is the $\Spin(7)$-invariant $\G_2$-structure.  See $\S$\ref{sec:7Sphere} for a more detailed discussion. \\

\indent Similarly, we may apply our theory to $M = N_{1,1}$, in which case $Z = \SU(3)/\mathrm{T}^2$.  In this case, horizontal pseudo-holomorphic curves in $\SU(3)/\mathrm{T}^2$ may be constructed as lifts of holomorphic curves in $\mathbb{CP}^2.$ Every compact Riemann surface $\Sigma$ can be realized as an immersed holomorphic curve in $\mathbb{CP}^2.$ Taking $w: \Sigma \to S^2$ to be a non-constant holomorphic map, we obtain:

\begin{thm} \label{thm:AssocAW} Fix $a,b > 0$.  For every $g \geq 0$, there exists a non-trivial compact associative $3$-fold in $(N_{1,1}, \varphi_{a,b})$ diffeomorphic to an $S^1$-bundle over a genus $g$ surface.
\end{thm}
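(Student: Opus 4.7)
The plan is to execute the recipe sketched in the paragraph following Theorem~\ref{thm:MainCorrespondence}: produce a horizontal pseudo-holomorphic curve $f: \Sigma_g \to Z$ together with a non-constant holomorphic map $w: \Sigma_g \to S^2$, then invoke the Correspondence Theorem to realize $\Gamma(f, w) \subset N_{1,1}$ as a compact associative $S^1$-bundle over $\Sigma_g$. For $M = N_{1,1}$ the relevant twistor data are $X = \CP^2$ (the Wolf space for $\SU(3)$, with its Fubini--Study quaternion-K\"ahler structure) and $Z = \SU(3)/\mathrm{T}^2$, the complete flag manifold equipped with its strict nearly-K\"ahler structure; the twistor projection $\tau \colon Z \to \CP^2$ is the forgetful map on flags $(L \subset P) \mapsto L$.

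First I would construct $f$ as the Gauss (tangent-line) lift of an immersed holomorphic curve $\sigma \colon \Sigma_g \to \CP^2$. Such a $\sigma$ exists for every $g \geq 0$ by a classical argument: embed $\Sigma_g$ in $\CP^3$ via a sufficiently ample linear system and project generically to $\CP^2$. The lift
\[
f(x) \;=\; \bigl( \sigma(x),\ T_x \sigma \bigr)
\]
is then a holomorphic map to the flag manifold $Z$, and it is horizontal for $\tau$ because every holomorphic curve in a K\"ahler $4$-manifold is superminimal with respect to the appropriately oriented quaternion-K\"ahler structure. For the second factor, Riemann--Roch (or simply the existence of non-constant meromorphic functions on a compact Riemann surface) supplies a non-constant holomorphic map $w \colon \Sigma_g \to \CP^1 = S^2$.

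With $f$ and $w$ in hand, the pair $(f, w) \colon \Sigma_g \to Z \times S^2$ is a compact $J$-holomorphic curve that does not lie in any $(S^2 \times S^2)$-fiber, since $f$ is non-constant. The remark following Theorem~\ref{thm:MainCorrespondence} then produces a compact $\varphi_{a,b}$-associative $3$-fold $\Gamma(f, w) \subset N_{1,1}$ diffeomorphic to an $S^1$-bundle over $\Sigma_g$. The main obstacle is verifying \emph{non-triviality}, i.e.\ that $\Gamma(f, w)$ is not of the form $p_{w_0}^{-1}(C)$ for some constant $w_0 \in S^2$ and curve $C \subset Z$. This follows from the non-constancy of $w$: the Hopf circles ruling $\Gamma(f, w)$ are integral curves of genuinely distinct Reeb fields $A_{w(x)}$ as $x$ varies over $\Sigma_g$, whereas a trivial associative $p_{w_0}^{-1}(C)$ is ruled by orbits of the single $S^1$-action generated by $A_{w_0}$; this contrast precludes any such presentation and completes the argument.
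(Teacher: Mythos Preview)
Your proposal is correct and follows essentially the same route as the paper: construct a horizontal pseudo-holomorphic curve in $Z = \SU(3)/\mathrm{T}^2$ by lifting an immersed holomorphic curve $\Sigma_g \to \CP^2$, pair it with a non-constant holomorphic $w \colon \Sigma_g \to S^2$, and invoke Proposition~\ref{prop:HorzHolo} together with Corollary~\ref{cor:MainCorollary}. The paper makes the horizontality of the lift explicit by developing the notion of \emph{superholomorphic} curves in the flag manifold and identifying the lift with $A_1 = 0$ as the horizontal one, whereas you appeal directly to the general twistor-theoretic fact that holomorphic curves in a K\"ahler $4$-manifold are superminimal; both justifications arrive at the same conclusion. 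One small point of care: your explicit tangent-line lift $x \mapsto (\sigma(x), T_x\sigma)$ is the map the paper calls $f_1$, which has $A_3 = 0$ rather than $A_1 = 0$, so you should check (or simply cite the superminimality argument) that this particular lift is horizontal for the twistor projection being used, rather than one of the other two natural $\CP^1$-fibrations of the flag manifold over $\CP^2$.
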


\indent In sum, we can obtain non-trivial compact $\varphi_{a,b}$-associative $3$-folds in a squashed $3$-Sasakian $7$-manifold $M$ by constructing horizontal pseudo-holomorphic curves in the corresponding nearly-K\"{a}hler orbifold $Z$.  For this reason, it would be desirable to have a large supply of such pseudo-holomorphic curves in twistor spaces $Z$ other than $\CP^3$ and $\SU(3)/\mathrm{T}^2$.  Such twistor spaces are orbifolds rather than manifolds, and it seems likely that techniques from complex algebraic geometry would prove useful in attacking this problem.

\indent Note that Theorem \ref{thm:MainCorrespondence}(b) applies to all values of the squashing parameters $a,b > 0$.  That is, for any $J$-holomorphic curve $\Sigma \subset Z \times S^2$, there are corresponding Hopf-ruled $\varphi_{a,b}$-associative $3$-folds $\Gamma(\Sigma - D) \subset M$ for every $a,b > 0$.  This raises the following question: For a fixed $J$-holomorphic curve $\Sigma \subset Z \times S^2$, to what extent does the deformation theory of the $\varphi_{a,b}$-associative $3$-fold $\Gamma(\Sigma - D) \subset M$ depend on the squashing parameters?  This is an interesting direction for further investigation. \\

\indent Finally, we comment briefly on the organization of this work.  Although $\S$\ref{sec:Prelim} consists of preliminary material, $\S$\ref{sec:SO4-Structures} contains several novelties.  In particular, Proposition \ref{prop:assocorbits} does not seem to be widely known.  The latter motivates the concept of ``striped” associative $3$-folds (see Definition \ref{def:striped}), which likely has applications beyond this work. \\ 
\indent In $\S$\ref{sec:Squashed3Sas}, we study squashed 3-Sasakian $7$-manifolds $(M,\varphi_{a,b})$ via the method of moving frames.  In $\S$\ref{sec:HopfCircle}, we show that the space of Hopf circles is diffeomorphic to $Z \times S^2$, and we equip this space with a canonical $2$-parameter family of almost-Hermitian structures $(h_{a,b}, J, \omega_{a,b})$.  Our study of associative $3$-folds in $(M, \varphi_{a,b})$ begins in $\S$\ref{sec:AssocSquash3-Sas}. \\

\noindent \textbf{Acknowledgements:} We thank Benjamin Aslan and Benoit Charbonneau for clarifying comments.  The second author thanks McKenzie Wang and Chung-Jun Tsai for conversations, and thanks McMaster University and the National Center for Theoretical Sciences for their support.

\subsection{Discussion: A Geometric View of the Correspondence Theorem}

\indent \indent We wish to explain in heuristic terms the geometric idea behind the Correspondence Theorem \ref{thm:MainCorrespondence} in the case of the $7$-sphere equipped with one of the co-closed $\G_2$-structures $\varphi_{a,b}$.  Viewing $M = S^7 \subset \HH^2$ and $X = S^4 \cong \HP^1$, note that each fiber of the quaternionic Hopf fibration $h \colon S^7 \to S^4$ is a $3$-sphere given by the intersection of $S^7$ with a quaternionic line $H$.  There is an $S^2$-family of identifications $w \colon H \xrightarrow{\cong} \C^2$, each of which yields a complex Hopf fibration $p_w \colon S^7 \to \CP^3$.  A Hopf circle in $S^7$ is an $S^1$-fiber $p_w^{-1}(z)$ for some fixed $w \in S^2$ and $z \in \CP^3$. \\
\indent Now, for every immersed pseudo-holomorphic curve $f \colon \Sigma \to \CP^3$ and every fixed $w \in S^2$, the pre-image $p_w^{-1}(f(\Sigma)) \subset S^7$ is a trivial associative $3$-fold in $S^7$, where ``pseudo-holomorphic" is meant with respect to the strict nearly-K\"{a}hler structure on $\CP^3$.  A natural question, then, is whether one can generalize the class of trivial associatives by allowing the ruling direction $w \in S^2$ to vary with $z \in \Sigma$ in some fashion.  In other words, which pairs of ``directrix surface" $f \colon \Sigma \to \CP^3$ and ``ruling function" $w \colon \Sigma \to S^2$ result in an immersed Hopf-ruled $3$-fold
\begin{equation} \label{eq:RuledSet}
    \bigcup_{z \in \Sigma} p^{-1}_{w(z)}(f(\Sigma)) \subset S^7
\end{equation}
that is associative?  Theorem \ref{thm:MainCorrespondence} answers this by relating $\varphi_{a,b}$-associatives in $S^7$ of the form (\ref{eq:RuledSet}) to pairs $(f,w) \colon \Sigma \to \CP^3 \times S^2$ consisting of a pseudo-holomorphic map $f$ and holomorphic map $w$ for which $\text{rank}(f,w) = 2$. \\
\indent The technical point is that certain pairs of directrix surface $f$ and ruling function $w$ yield Hopf-ruled $3$-folds $\Gamma(f,w) \subset S^7$ that are not immersed.  This sort of issue is endemic to the theory of ruled submanifolds, occurring even in the classical case of ruled surfaces in $\R^3$.  Indeed, while one would like to say that any function $\Gamma \colon \R^2 \to \R^3$ of the form $\Gamma(s,t) := \mathbf{f}(s) + t\mathbf{w}(s)$ with $\mathbf{f}' \cdot \mathbf{w} = 0$ and $\Vert \mathbf{w} \Vert = 1$ parametrizes a ruled surface, certain pairs of ($\mathbf{f}, \mathbf{w}$) result in degenerations.  For example, taking $\mathbf{f}(s) = \mathbf{0}$ and $\mathbf{w}(s) = \frac{1}{\sqrt{2}}(\cos(s), \sin(s), 1)$, the resulting map $\Gamma$ parameterizes a cone in $\R^3$, but is not an immersion on $\{(s,t) \colon t = 0\}$.  In our situation, assuming that $f$ is pseudo-holomorphic, that $w$ is holomorphic, and that the image of $(f,w)$ doesn't lie in an $(S^2 \times S^2)$-fiber, Theorem \ref{thm:MainCorrespondence}(b) shows that one only need remove a discrete set $D$ from $\Sigma$ to obtain an immersed Hopf-ruled associative $\Gamma(\Sigma - D) \subset S^7$.

\subsection{Terminology}

\indent \indent (1) Given a Riemannian $n$-manifold $(M, g)$, the \emph{(metric) cone over $(M,g)$} is the Riemannian $(n+1)$-manifold
$$(\mathrm{C}(M), g_{\mathrm{C}}) := (\R^+ \times M, dr^2 + r^2g).$$
We always identify $M$ with the \emph{link} $\{1\} \times M \subset \text{C}(M)$. \\

\indent (2) On a $7$-manifold $M$, an \emph{$\SO(4)$-structure} is a pair $(\varphi, \mathsf{A})$ consisting of a $\G_2$-structure $\varphi \in \Omega^3(M)$ and a distribution $\mathsf{A} \subset TM$ of associative $3$-planes.  The following two related concepts are fundamental to this work, both of which are discussed in $\S$\ref{sec:SO4-Structures}.
\begin{itemize}
    \item When $M$ carries an $\SO(4)$-structure $(\varphi, \mathsf{A})$, an associative $3$-fold $N \subset M$ is called \emph{striped} if $\dim(T_xN \cap \mathsf{A}_x) = 1$ for each $x \in N$.  
    \item An $\SO(4)$-structure $(\varphi, \mathsf{A})$ on $M$ is called \emph{quaternion-Sasakian} if the induced $\Sp(2)\Sp(1)$-structure on $\mathrm{C}(M)$ is quaternion-K\"{a}hler.  Quaternion-Sasakian $7$-manifolds are virtually the same as $3$-Sasakian $7$-manifolds, but entail slightly less structure.  See $\S$\ref{sec:SO4-Structures} for details.
\end{itemize}

\section{Preliminaries} \label{sec:Prelim}

\subsection{$\G_2$-Structures and Associative $3$-folds}

\indent \indent Let $M^7$ be a smooth $7$-manifold.  A \emph{$\G_2$-structure} on $M$ is a $3$-form $\varphi \in \Omega^3(M)$ such that at each $x \in M$, the symmetric bilinear form $B_\varphi|_x \colon \Sym^2(T^*_xM) \to \Lambda^7(T^*_xM)$ given by $B_\varphi(u,v) := \textstyle \frac{1}{6}(\iota_u\varphi) \wedge (\iota_v\varphi) \wedge \varphi$ is definite. This is equivalent to the condition that at each $x \in M$, there exists a frame $(e_1, \ldots, e_7)$ at $x$ for which
$$\varphi|_x = e^{123} + e^{145} + e^{167} + e^{246} - e^{257} - e^{347} - e^{356}$$
where $(e^1, \ldots, e^7)$ is the dual coframe and $e^{ijk} := e^i \wedge e^j \wedge e^k$.  It is well-known that a $7$-manifold admits a $\G_2$-structure if and only if it is orientable and spin.  A $\G_2$-structure $\varphi \in \Omega^3(M)$ naturally induces a Riemannian metric $g_\varphi$ and orientation $\vol_\varphi \in \Omega^7(M)$ via
\begin{align*}
\textstyle g_\varphi(u,v)\,\vol_\varphi & = \textstyle \frac{1}{6}(\iota_u\varphi) \wedge (\iota_v\varphi) \wedge \varphi
\end{align*}
Two $\G_2$-structures $\varphi$, $\widehat{\varphi}$ on $M$ are called \emph{isometric} if $g_\varphi = g_{\widehat{\varphi}}$. \\

\indent A $\G_2$-structure $\varphi \in \Omega^3(M)$ is said to be \textit{co-closed} if $d\ast\varphi = 0$.  All of the $\G_2$-structures considered in this work will be co-closed.  If $\varphi$ is co-closed, then it is well-known that
$$d\varphi = \tau_0\ast\!\varphi + \ast\tau_3$$
for a unique function $\tau_0 \in \Omega^0(M)$ and $3$-form $\tau_3 \in \Omega^3_{27}(M) = \Gamma(\Lambda^3_{27}(M))$, where here
$$\Lambda^3_{27}(M) := \left\{ \gamma \in \Lambda^3(T^*M) \colon \gamma \wedge \varphi = 0 \text{ and }  \gamma \wedge \ast\varphi = 0 \right\}\!.$$
A $\G_2$-structure $\varphi$ is called \textit{nearly-parallel} if $d\varphi = \lambda \ast\! \varphi$ for some for some non-zero constant $\lambda \in \R$.  It is clear that every nearly-parallel $\G_2$-structure is co-closed.  If $\varphi$ is a nearly-parallel $\G_2$-structure, then the induced metric $g_\varphi$ is Einstein with $\text{Scal}(g_\varphi) = \frac{21}{8}\lambda^2 > 0$. \\

\indent An associative $3$-plane is an oriented $3$-dimensional subspace $E \subset T_xM$ such that $\varphi|_E = \vol_E$.  An \emph{associative $3$-fold} is an oriented submanifold $\Sigma^3 \subset (M^7, \varphi)$ for which each tangent space $T_x\Sigma \subset T_xM$ is an associative $3$-plane --- i.e., a submanifold satisfying $\varphi|_\Sigma = \pm\vol_\Sigma$.  Associative $3$-folds are fundamental sub-objects in $\G_2$-geometry, and are formally analogous to holomorphic curves in Calabi-Yau $3$-folds.

\begin{rmk} Let $(M^7, \varphi)$ be a $7$-manifold with a co-closed $\G_2$-structure $\varphi$.  It is natural to ask under what conditions an associative $3$-fold $\Sigma \subset M$ is a minimal submanifold.  To answer this, consider the $\G_2$-vector bundle $TM|_\Sigma$.  The splitting $TM|_\Sigma = T\Sigma \oplus N\Sigma$ reduces the structure group to $\SO(4) \leq \G_2$, and so each $\Lambda^k(T^*M|_\Sigma)$ is an $\SO(4)$-vector bundle as well.  In particular, is shown in \cite{ball2020meanExcept} that $\left.\Lambda^3_{27}(M)\right|_{\Sigma}$ decomposes into $\SO(4)$-irreducible vector subbundles
$$\left.\Lambda^3_{27}(M)\right|_\Sigma = (\Lambda^3_{27})_{0,0} \oplus (\Lambda^3_{27})_{0,4} \oplus (\Lambda^3_{27})_{1,1}  \oplus (\Lambda^3_{27})_{1,3} \oplus (\Lambda^3_{27})_{2,2}$$
of ranks $1, 5, 4, 8$, and $9$, respectively.  Moreover, there is a natural $\SO(4)$-vector bundle isomorphism $\natural \colon N\Sigma \to (\Lambda^3_{27})_{1,1}$.  In this language, the mean curvature vector field $H \in \Gamma(N\Sigma)$ of the associative $\Sigma \subset M$ is given by
$$H^\natural = \textstyle -\frac{\sqrt{3}}{2}(\tau_3)_{1,1}$$
Thus, the generic associative $3$-fold in $(M, \varphi)$ is not minimal.  However, if the co-closed $\G_2$-structure satisfies $\tau_3 = 0$ (e.g., if it is nearly-parallel or torsion-free), then every associative in $M$ is minimal.
\end{rmk}

\subsection{$\SO(4)$-Structures and Associative $3$-folds} \label{sec:SO4-Structures}

\indent \indent The $7$-manifolds $M$ that we consider in this work will carry an additional bit of data beyond the co-closed $\G_2$-structure.  Namely:

\begin{defn} Let $M^7$ be a smooth $7$-manifold.  An \emph{$\SO(4)$-structure} on $M$ is a pair $(\varphi, \mathsf{A})$ consisting of a $\G_2$-structure $\varphi \in \Omega^3(M)$ and an associative $3$-plane distribution $\mathsf{A} \subset TM$.  The geometry of $\SO(4)$-structures has been studied in \cite[$\S$7 -- $\S$11]{ball2019seven}. 
\end{defn}

\indent If $M$ admits an $\SO(4)$-structure $(\varphi, \mathsf{A})$, then its metric cone $(\mathrm{C}(M), g_{\mathrm{C}})$ admits a compatible $\Spin^h(4)$-structure, where $\Spin^h(4) = (\SU(2) \times \SU(2) \times \SU(2))/\Z_2$.  That is, $\mathrm{C}(M)$ admits a conical $\Spin(7)$-structure $\Phi \in \Omega^4(\mathrm{C}(M))$ together with a preferred Cayley $4$-plane distribution $\mathsf{K} \subset T(\mathsf{C}(M))$ via
\begin{align*}
    \Phi & = r^3dr \wedge \varphi + r^4\ast\! \varphi & \mathsf{K} = \mathsf{A} \oplus \text{span}(\partial_r).
\end{align*}
More importantly for this work, since $\Spin^h(4) \leq \Sp(2)\Sp(1)$, the $8$-manifold $\text{C}(M)$ also admits a conical $\Sp(2)\Sp(1)$-structure.  To be explicit, note that the volume form of $M$ decomposes as $\vol = \vol_{\mathsf{A}} \wedge \ast\vol_{\mathsf{A}}$, where $\vol_{\mathsf{A}} \in \Omega^3(M)$ denotes the volume form of $\mathsf{A}$ and all Hodge stars are taken on $M$.  Then the $4$-form on $\Theta \in \Omega^4(\mathsf{C}(M))$ given by
\begin{align*}
    \Theta & = r^3dr \wedge \left( 4\vol_{\mathsf{A}} - \varphi \right) + r^4\! \ast\!(4\vol_{\mathsf{A}} - \varphi)
\end{align*}
is a conical $\Sp(2)\Sp(1)$-structure. \\
\indent Interestingly, one does not need the full $\Spin^h(4)$-structure on $\mathrm{C}(M)$ to recover the $\SO(4)$-structure on $M$.  Indeed, if $\mathrm{C}(M)$ carries merely a conical $\Sp(2)\Sp(1)$-structure $\Theta \in \Omega^4(\text{C}(M))$, then $M$ inherits an $\SO(4)$-structure $(\varphi, \mathsf{A})$ via
\begin{align*}
    \varphi & = 4\vol_{\mathsf{A}} - \ast_M\Theta|_M & \mathsf{A} = \text{span}( I_1(\partial_r), I_2(\partial_r), I_3(\partial_r)),
\end{align*}
where $(I_1, I_2, I_3)$ are locally-defined almost-complex structures on $\mathrm{C}(M)$ satisfying the quaternionic relations ($I_1I_2 = I_3$, etc.) that arise from the $\Sp(2)\Sp(1)$-structure.   This motivates the following:

\begin{defn} An $\SO(4)$-structure $(\varphi, \mathsf{A})$ on $M$ is called \emph{quaternion-Sasakian} if the induced $\Sp(2)\Sp(1)$-structure on $\mathrm{C}(M)$ is quaternion-K\"{a}hler.  
\end{defn}

\begin{prop} \label{prop:Quat-Sas-G2}
If $(\varphi, \mathsf{A})$ is a quaternion-Sasakian structure on $M$, then \\
\indent (a) $\varphi$ is isometric to a nearly-parallel $\G_2$-structure. \\
\indent (b) $\varphi$ is co-closed. \\
\indent (c) $\mathsf{A}$ is integrable.
\end{prop}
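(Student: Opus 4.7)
The plan is to first show that the quaternion-K\"{a}hler structure on $\mathrm{C}(M)$ must in fact be hyperk\"{a}hler, which reduces all three claims to essentially standard facts about 3-Sasakian 7-manifolds. For this hyperk\"{a}hler reduction, recall that by Berger's theorem a quaternion-K\"{a}hler manifold of dimension $\geq 8$ is Einstein, so $\mathrm{Ric}(g_{\mathrm{C}}) = \Lambda g_{\mathrm{C}}$ for some constant $\Lambda$. However, $\mathrm{C}(M)$ admits the radial homothety $r\partial_r$ with $\mathcal{L}_{r\partial_r} g_{\mathrm{C}} = 2 g_{\mathrm{C}}$. Since the Ricci tensor is scale-invariant, $\mathcal{L}_{r\partial_r}\mathrm{Ric}(g_{\mathrm{C}}) = 0$, whereas $\mathcal{L}_{r\partial_r}(\Lambda g_{\mathrm{C}}) = 2\Lambda g_{\mathrm{C}}$, forcing $\Lambda = 0$. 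A Ricci-flat quaternion-K\"{a}hler manifold in dimension $\geq 8$ has holonomy in $\Sp(2) \subset \Sp(2)\Sp(1)$, since strict $\Sp(2)\Sp(1)$-holonomy yields nonzero scalar curvature. Hence the cone is hyperk\"{a}hler.

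With $\mathrm{C}(M)$ hyperk\"{a}hler, the $\Sp(1)$-bundle is trivialized, and I would use this to extract globally defined parallel orthogonal almost complex structures $I_1, I_2, I_3$ on $\mathrm{C}(M)$ satisfying $I_1 I_2 = I_3$. Setting $\xi_j := I_j \partial_r|_{r=1}$, the construction of the $\SO(4)$-structure from the conical $\Sp(2)\Sp(1)$-data gives $\mathsf{A} = \mathrm{span}(\xi_1, \xi_2, \xi_3)$. A standard hyperk\"{a}hler-cone computation using $\nabla I_j = 0$ then yields the 3-Sasakian bracket relations $[\xi_i, \xi_j] = 2\epsilon_{ijk}\xi_k$, so $\mathsf{A}$ is involutive and (c) follows. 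For (a), the canonical inclusion $\Sp(2) \subset \Spin(7)$ furnishes a torsion-free conical $\Spin(7)$ 4-form $\widehat\Phi$, which under the cone correspondence $\widehat\Phi = r^3 dr \wedge \widehat\varphi + r^4 * \widehat\varphi$ gives a nearly-parallel $\G_2$-structure $\widehat\varphi$ on $M$. Since both $\varphi$ and $\widehat\varphi$ induce the restriction of the cone metric to $M$, they are isometric, proving (a).

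Finally, for (b), I would unpack $d\Theta = 0$ using the given formula $\Theta = r^3 dr \wedge \eta + r^4 * \eta$ with $\eta := 4\vol_{\mathsf{A}} - \varphi$. Separating by $dr$-content yields two equations on $M$: $d\eta = 4 * \eta$ and $d(*\eta) = 0$, the latter equivalent to $d(*\varphi) = 4 \, d(*\vol_{\mathsf{A}})$. Using the 3-Sasakian data from the previous paragraph, the canonical $\mathsf{A}$-foliation has totally geodesic leaves, whence $d(*\vol_{\mathsf{A}}) = 0$; this gives $d(*\varphi) = 0$, proving (b). The main obstacle is the hyperk\"{a}hler reduction in the first paragraph, which rests on Berger's theorems on quaternion-K\"{a}hler geometry together with the scale-invariance of Ricci; once this is in hand, the rest consists of standard constructions on hyperk\"{a}hler cones and elementary consequences of the 3-Sasakian structure.
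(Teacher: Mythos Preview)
Your approach to (a) is essentially identical to the paper's: reduce the cone to hyperk\"ahler via ``QK $\Rightarrow$ Einstein'' plus the scale-invariance of Ricci on a cone, then invoke the inclusion $\Sp(2) \subset \Spin(7)$. For (b) and (c), however, you take a genuinely different route. The paper proves both by deriving the full first and second structure equations of the $\SO(4)$-coframe bundle $\mathcal{B}$ (Proposition~\ref{prop:structeqns}) and then reading off integrability of $\mathsf{A}$ and $d\psi_{a,b}=0$ from those equations. Your argument is more synthetic: once the cone is hyperk\"ahler, (c) follows from the standard $3$-Sasakian bracket $[\xi_i,\xi_j]=2\epsilon_{ijk}\xi_k$, and (b) follows from unpacking $d\Theta=0$. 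This is shorter and more conceptual; the paper's moving-frame computation, by contrast, yields much more --- the structure equations are reused throughout \S\ref{sec:Squashed3Sas}--\S\ref{sec:AssocSquash3-Sas} to compute torsion of all the $\varphi_{a,b}$, to build the almost-Hermitian structure on $Z\times S^2$, and to prove the Correspondence Theorem.

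There is one gap in your argument for (b). From $d\Theta=0$ you correctly extract $d(*\eta)=0$, hence $d(*\varphi)=4\,d(*\vol_{\mathsf{A}})$, and you then claim ``totally geodesic leaves $\Rightarrow d(*\vol_{\mathsf{A}})=0$''. That implication is not valid in general: total geodesy of the leaves of $\mathsf{A}$ constrains $d(\vol_{\mathsf{A}})$, not $d(*\vol_{\mathsf{A}})=d(\vol_{\mathsf{C}})$. The correct reason is that the Reeb fields $\xi_j$ are Killing and preserve $\mathsf{C}$, so $\iota_{\xi_j}d\vol_{\mathsf{C}}=\mathcal{L}_{\xi_j}\vol_{\mathsf{C}}=0$; since $d\vol_{\mathsf{C}}$ is a $5$-form annihilated by all of $\mathsf{A}$ and $\mathsf{C}$ is $4$-dimensional, it must vanish. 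Equivalently, $h\colon M\to X$ is a Riemannian submersion and $*\vol_{\mathsf{A}}=h^*\vol_X$ is the pullback of a closed form. Once you replace ``totally geodesic'' with this, your proof of (b) goes through.
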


\indent We will prove (a) in $\S$\ref{sec:Nearly-Parallel}.  The proofs of (b) and (c), along with a second proof of (a), are in $\S$\ref{sec:MovingFramesQS}.  More generally, we will show that if $(\varphi, \mathsf{A})$ is quaternion-Sasakian, then $\varphi$ belongs to a canonical $2$-parameter family of co-closed $\G_2$-structures $\varphi_{a,b}$ with $\varphi_{1,1} = \varphi$.  Note that this generalization of (b) is implicit in \cite[$\S$5]{friedrich1997nearly}, and that part (c) is a standard fact from $3$-Sasakian geometry.  However, the language of quaternion-Sasakian structures is new.

\subsubsection{Associative $3$-folds in $\SO(4)$-Structures}

\indent \indent The Grassmannian of associative $3$-planes in $\R^7$ is the symmetric space $\text{Asoc} = \G_2/\SO(4)$. In order to study associative 3-folds in 7-manifolds with $\SO(4)$-structure, it is important to understand the $\SO(4)$-action on $\text{Assoc}$. This action is the isotropy action of a rank two symmetric space, so it has cohomogeneity two. The principal orbits are equivariantly diffeomorphic to the $6$-manifold $\SO(4)/(\Z_2 \times \Z_2)$. The $2$-dimensional orbit space can be viewed as a right triangle in $\R^2$, as demonstrated in the following proposition.

\begin{prop}\label{prop:assocorbits}
Let $\SO(4)$ act on $\R^7$ as the stabilizer of the pair $(\varphi, A),$ where
\begin{align*}
    \varphi & = e^{123} + e^{145} + e^{167} + e^{246} - e^{257} - e^{347} - e^{356} \\
    A & = \operatorname{span}(e_1, e_2, e_3).
\end{align*}
Any associative 3-plane $P < \R^7$ is $\SO(4)$-equivalent to the associative 3-plane
\begin{equation*}
    P_{s,r} = \operatorname{span}\!\left( \cos\!\left(2 s\right) e_1 + \sin\!\left(2 s\right) e_6, \ \cos\! \left(s - r \right) e_2 + \sin\! \left(s - r \right) e_5, \ \cos\!\left(s+r\right)e_3 + \sin\!\left(s+r\right) e_4 \right)\!,
\end{equation*}
with $s$ and $r$ unique subject to the constraints $s \geq 0,$ $3 s \leq r,$ $r \leq \pi / 2.$
\end{prop}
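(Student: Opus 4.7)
My plan is to use the $\SO(4)$-action explicitly to reduce an arbitrary associative $3$-plane to the normal form $P_{s,r}$, and then analyze the residual discrete symmetry group.

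\emph{Step 1 (Associativity of $P_{s,r}$).} I would first verify by direct calculation that $P_{s,r}$ is associative for every $(s,r)$. Since its three defining vectors lie in the pairwise orthogonal coordinate $2$-planes $\operatorname{span}(e_1,e_6)$, $\operatorname{span}(e_2,e_5)$, $\operatorname{span}(e_3,e_4)$, they form an orthonormal basis automatically. Expanding $\varphi(v_1,v_2,v_3)$ against the given formula for $\varphi$, only four terms survive (from $e^{123}, e^{145}, e^{246}, -e^{356}$), and grouping via $\cos((s-r)+(s+r)) = \cos(2s)$ and $\sin((s-r)+(s+r)) = \sin(2s)$ collapses the sum to $\cos^2(2s)+\sin^2(2s) = 1$.

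\emph{Step 2 (Reduction to normal form).} Given an associative $P$, consider the orthogonal projection $\pi \colon P \to A$. Using the identification $A = \Img(\HH)$, $A^\perp = \HH$ with $\SO(4) = \Sp(1)_L \times_{\Z_2} \Sp(1)_R$, the factor $\Sp(1)_L$ induces the full $\SO(3)$-action on $A$ by conjugation, and I would use it to perform a singular value decomposition of $\pi$: there is an orthonormal basis $\{v_1, v_2, v_3\}$ of $P$ with $v_i = \cos\theta_i\, e_i + \sin\theta_i\, n_i$ and $n_1, n_2, n_3$ orthonormal in $A^\perp$. The factor $\Sp(1)_R$ acts simply transitively on the unit sphere in $A^\perp$, so I could use it to set $n_1 = e_6$, forcing $n_2, n_3 \in \operatorname{span}(e_4, e_5, e_7)$. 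Expanding the associativity condition as $\varphi(v_1, v_2, v_3) = \cos\theta_1 \, X + \sin\theta_1 \, Y = 1$ for functions $X, Y$ of $(\theta_2, \theta_3, n_2, n_3)$, a direct computation shows $X^2 + Y^2 \leq 1$, with equality forcing the $e_7$-components of $n_2$ and $n_3$ to vanish and fixing the orientation of the orthonormal pair within $\operatorname{span}(e_4, e_5)$; associativity then yields the angle relation $\theta_1 = \theta_2 + \theta_3$. Setting $\theta_1 = 2s$, $\theta_2 = s-r$, $\theta_3 = s+r$ recovers $P = P_{s,r}$.

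\emph{Step 3 (Fundamental domain).} The residual freedom is discrete: the sign ambiguities and pair-permutations intrinsic to the SVD, the sign choice in aligning $n_1 = \pm e_6$, and the $2\pi$-periodicities of the $\theta_i$. These act on $\R^2_{(s,r)}$ as a finite reflection group, which I would identify with the Weyl group of type $G_2$ (order $12$), extended by an affine reflection from the periodicity. The alcove for this action is precisely the triangle $\{s \geq 0,\; 3s \leq r,\; r \leq \pi/2\}$. To confirm this, I would produce explicit $\SO(4)$-elements realizing the three reflecting generators (fixing the sides $s = 0$, $r = 3s$, $r = \pi/2$) and verify that within the interior the $\SO(4)$-invariants—e.g.\ the triple of principal angles $|2s|, |s-r|, |s+r|$ between $P_{s,r}$ and $A$—separate $\SO(4)$-orbits.

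The main obstacle is Step 3: matching the discrete $\SO(4)$-symmetries with the $G_2$ affine Weyl group, particularly the affine reflection encoding $r \leq \pi/2$. The bookkeeping of signs and permutations inherited from the SVD and from the $\Sp(1)_R$-alignment is notationally heavy. An alternative, more structural route is to recognize $\mathrm{Assoc} = \G_2/\SO(4)$ as a compact rank-$2$ symmetric space and identify $\{P_{s,r}\}$ as the exponential of a Cartan subspace $\mathfrak{a} \subset \mathfrak{m}$; the fundamental alcove for the restricted affine Weyl group of type $G_2$ is then automatically a triangle, and one need only match the concrete inequalities.
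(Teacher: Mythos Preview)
Your outline is sound but takes a genuinely different route from the paper. The paper's key move is a dimension reduction: since $\dim(P^\perp\cap A^\perp)\ge 1$ (the paper writes $P\cap A^\perp$, evidently a slip), transitivity of $\SO(4)$ on the unit sphere in $A^\perp$ lets one arrange $e_7\perp P$. The stabilizer of $e_7$ is a copy of $O(3)$ acting \emph{diagonally} on $A\oplus B$ with $B=\operatorname{span}(e_4,e_5,e_6)$, so the classical Jordan (principal) angle theory for a $3$-plane in $\R^3\oplus\R^3$ puts $P$ into a normal form $P_{\gamma_1,\gamma_2,\gamma_3}$ with the ordering $0\le\gamma_1\le\gamma_2\le\gamma_3\le\pi/2$ and uniqueness already built in; associativity then reduces to the single linear constraint on $\gamma_1+\gamma_2+\gamma_3$, and the change of variables $(\gamma_1,\gamma_2,\gamma_3)=(2s,\,s+r,\,\pi-s+r)$ converts the ordering into the triangle $s\ge0$, $3s\le r$, $r\le\pi/2$. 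By contrast, your Steps~1--2 reach the normal form through the $\Sp(1)_L\times_{\Z_2}\Sp(1)_R$ factorization, which leaves a residual circle (the $\Sp(1)_R$-stabilizer of $e_6$) and forces you to extract the vanishing of the $e_7$-components of $n_2,n_3$ from an associativity inequality by hand; your Step~3 must then recover uniqueness from scratch via an affine $G_2$ Weyl-alcove analysis. That route is correct and conceptually illuminating---it makes the rank-$2$ symmetric-space structure of $\G_2/\SO(4)$ explicit and explains \emph{why} the orbit space is a triangle---but it is substantially heavier than the paper's argument, where Jordan-angle uniqueness handles Step~3 in one line.
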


\begin{proof}
    The intersection $P \cap A^\perp$ has dimension at least one. The $\SO(4)$-action is transitive on the unit sphere in $A^\perp,$ so by acting by an element of $\SO(4)$ we may assume that $e_7 \in P \cap A^\perp$. The subgroup of $\SO(4)$ stabilizing $e_7$ is $\mathrm{O}(3),$ which splits $\R^7$ as $A \oplus B \oplus e_7,$ where $B = \operatorname{span}(e_4, e_5, e_6).$
    
    We now consider $P$ as a subspace of $A \oplus B$ under the action of $\mathrm{O}(3).$ It follows from the general theory of Jordan angles \cite{jordan1875essai} (also known as principal angles) that $P$ is $\mathrm{O}(3)$-equivalent to the 3-plane
    \begin{equation*}
        P_{\gamma_1, \gamma_2, \gamma_3} = \operatorname{span} \left( \cos (\gamma_1 ) e_1 + \sin(\gamma_1) e_6, \cos (\gamma_2 ) e_2 - \sin(\gamma_2) e_5, \cos (\gamma_3 ) e_3 - \sin(\gamma_3) e_4 \right)
    \end{equation*}
    with $\gamma_1, \gamma_2, \gamma_3$ unique subject to $0 \leq \gamma_1 \leq \gamma_2 \leq \gamma_3 \leq \pi / 2.$ The condition that $P_{\gamma_1, \gamma_2, \gamma_3}$ be associative is $\gamma_1 + \gamma_2 + \gamma_3 = \pi.$ We may parameterize such triples by $\gamma_1 = 2 s,$ $\gamma_2 = s + r,$ $\gamma_3 = \pi - s + r.$ The conditions $0 \leq \gamma_1 \leq \gamma_2 \leq \gamma_3 \leq \pi / 2$ become $s \geq 0,$ $3 s \leq r,$ $r \leq \pi / 2.$
\end{proof}

\begin{figure}
    \centering
\begin{tikzpicture}[scale=0.6]
    \tkzDefPoint(12,4){A}
        \tkzDefPoint(12,0){B}
            \tkzDefPoint(0,0){C}
    \tkzLabelPoint[left](C){$(0,0)$}
    \tkzLabelPoint[right](B){$(\frac{\pi}{2},0)$}
    \tkzLabelPoint[right](A){$(\frac{\pi}{2},\frac{\pi}{6})$}
    \tkzDrawSegment(A,B)
    \tkzDrawSegment(A,C)
    \tkzDrawSegment[style=red](C,B)
    \tkzLabelLine[pos=0.5, below ](C,B){$s = 0$}
    \tkzDrawPoints(A,B,C)
\end{tikzpicture}
\caption{Orbit space of the $\SO(4)$-action on $\text{Asoc} = \G_2/\SO(4)$ in the $rs$-plane.}
\label{fig-triangle}
\end{figure}
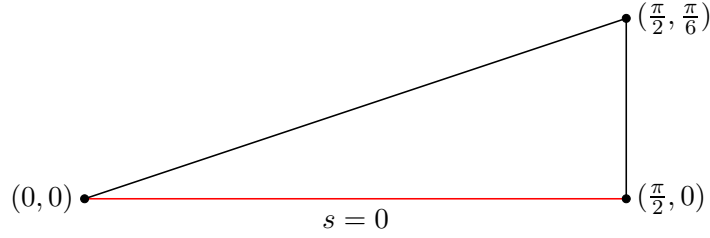

If $N$ is an associative 3-fold in a 7-manifold with $\SO(4)$-structure $(M, \varphi, \mathsf{A}),$ then Proposition \ref{prop:assocorbits} defines two functions $r \colon N \to \left[ 0 ,\pi / 2 \right]$ and $s \colon N \to \left[ 0 ,\pi / 6 \right]$ that describe the position of the tangent planes $T_xN$ relative to the associative distribution $\mathsf{A}_x$.

\begin{prop} \label{prop:AssocIntersect} Let $(M, \varphi, \mathsf{A})$ be a $7$-manifold with an $\SO(4)$-structure, let $N \subset M$ be an associative $3$-fold, and let $x \in N$.  The following are equivalent: \\
\indent (i) $\dim(T_xN \cap \mathsf{A}_x) \geq 1$. \\
\indent (ii) $\dim(T_xN \cap \mathsf{A}_x) = 1$ or $T_xN = \mathsf{A}_x$. \\
\indent (iii) $s(x) = 0$. \\
Moreover, $\dim(T_xN \cap \mathsf{A}_x) = 1$ if and only if $s(x) = 0$ and $r(x) \neq 0$.
\end{prop}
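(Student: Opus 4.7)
The plan is to reduce the whole question to an $\SO(4)$-normal form computation via Proposition \ref{prop:assocorbits}. Since $\SO(4)$ stabilizes both $\varphi$ and $\mathsf{A}_x$, the integer $\dim(T_xN \cap \mathsf{A}_x)$ depends only on the $\SO(4)$-orbit of $T_xN$, and that orbit is completely determined by the pair $(s(x), r(x))$. Hence it suffices to compute $\dim(P_{s,r} \cap A)$ for all admissible $(s,r)$ in the closed triangle $\{s \geq 0,\ 3s \leq r,\ r \leq \pi/2\}$, where $A = \operatorname{span}(e_1, e_2, e_3)$.

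Writing a general element of $P_{s,r}$ as
$$v = a\bigl(\cos(2s) e_1 + \sin(2s) e_6\bigr) + b\bigl(\cos(s-r) e_2 + \sin(s-r) e_5\bigr) + c\bigl(\cos(s+r) e_3 + \sin(s+r) e_4\bigr),$$
the condition $v \in A$ is equivalent to the simultaneous vanishing of the $e_4, e_5, e_6$ coefficients:
$$a\sin(2s) \;=\; b\sin(s-r) \;=\; c\sin(s+r) \;=\; 0.$$
Everything then comes down to a case analysis over the fundamental triangle, using that the constraints force $s \in [0, \pi/6]$ and $r \in [0, \pi/2]$.

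I would distinguish three subcases. (a) If $s > 0$, the constraint $r \geq 3s$ locates $2s \in (0, \pi/3]$, $s - r \in [-\pi/2,\, -2s] \subset [-\pi/2, 0)$, and $s + r \in [4s,\, s + \pi/2] \subset (0, 2\pi/3]$, so \emph{all three} sines are strictly non-zero and the intersection is $\{0\}$. (b) If $s = 0$ and $r > 0$, only $\sin(2s)$ vanishes, so $b = c = 0$ but $a$ is free and the intersection is the line $\R e_1$, of dimension $1$. (c) If $s = r = 0$, all three sines vanish and $P_{0,0} = A$, so the intersection is $\mathsf{A}_x$ itself. Thus $\dim(T_xN \cap \mathsf{A}_x) \in \{0, 1, 3\}$, \emph{never $2$} — and this trichotomy is exactly what makes (i) and (ii) equivalent.

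The equivalence with (iii) and the ``moreover'' clause are then immediate from the three subcases: the intersection is positive-dimensional precisely when $s(x) = 0$, and it is exactly one-dimensional precisely when $s(x) = 0$ together with $r(x) \neq 0$. The one substantive step is the elementary trigonometric check in subcase (a); the main point to guard against is the possibility of a $2$-dimensional intersection, and the role of the constraint $r \geq 3s$ cutting out the fundamental triangle is precisely to exclude the degenerate boundary values of $s - r$ and $s + r$ where an intermediate sine could vanish.
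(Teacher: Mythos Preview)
Your argument is correct. The paper does not include an explicit proof of this proposition, treating it as an immediate consequence of the normal form in Proposition~\ref{prop:assocorbits}; your case analysis on the fundamental triangle is precisely the direct verification that makes that implicit step rigorous, and the trichotomy $\dim(P_{s,r}\cap A)\in\{0,1,3\}$ you extract is exactly the content of the proposition.
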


\begin{defn} \label{def:striped} Let $(M, \varphi, \mathsf{A})$ be a $7$-manifold with an $\SO(4)$-structure. An associative $3$-fold $N \subset M$ is called \emph{striped} if either of the following equivalent conditions holds: \\
\indent (i) $\dim(T_xN \cap \mathsf{A}_x) = 1$ for each $x \in \Sigma$. \\
\indent (ii) The function $s\colon N \to \left[ 0 ,\pi / 6 \right]$ vanishes identically and $r \colon N \to \left[ 0 ,\pi / 2 \right]$ is nowhere vanishing.
\end{defn}

\indent When $M$ is a quaternion-Sasakian $7$-manifold or one of its ``squashings" (as defined in $\S$\ref{sec:SquashNew}), we will show in Proposition \ref{prop:stripedHopf} that an associative submanifold of $M$ is striped if and only if it is ``ruled by Hopf circles" and not tangent to $\mathsf{A}$.

\subsection{$\Sp(1)$-Structures and Associative $3$-folds} \label{sec:Sp(1)-str}

\indent \indent In this work, the $\SO(4)$-structures that we encounter will underlie $\Sp(1)$-structures.  Although we are primarily interested in the $\SO(4)$-structure, it will be occasionally be useful to perform computations in terms of the richer $\Sp(1)$-structure.

\begin{defn} Let $M^7$ be a smooth $7$-manifold.  An \emph{almost contact metric structure} (or \emph{$\U(3)$-structure}) is a triple $(\langle \cdot, \cdot \rangle, \alpha, \mathsf{J})$ consisting of a Riemannian metric $\langle \cdot, \cdot \rangle$, a $1$-form $\alpha \in \Omega^1(M)$, and an endomorphism $\mathsf{J} \in \Gamma(\End(TM))$ such that
\begin{align*}
\left.\mathsf{J}^2\right|_{\text{Ker}(\alpha)} & = -\Id & \mathsf{J}(A) & = 0
& \langle \mathsf{J}X, \mathsf{J}Y \rangle & = \langle X,Y \rangle - \alpha(X)\alpha(Y),
\end{align*}
where $A := \alpha^\sharp \in \Gamma(TM)$ is the Reeb field, so that $\alpha(A) = 1$. 

\indent An \emph{$\Sp(1)$-structure} on $M$ consists of data $(\langle \cdot, \cdot \rangle, (\alpha_1, \alpha_2, \alpha_3), (\mathsf{J}_1, \mathsf{J}_2, \mathsf{J}_3))$ for which each triple ($\langle \cdot, \cdot \rangle, \alpha_p, \mathsf{J}_p)$ is an almost contact metric structure, and
\begin{align*}
\mathsf{J}_p \circ \mathsf{J}_q - \alpha_p \otimes A_q & = -\epsilon_{pqr}\mathsf{J}_r - \delta_{pq}\Id \\
\mathsf{J}_p(A_q) & = -\epsilon_{pqr}A_r,
\end{align*}
\end{defn}
\noindent where $A_p := \alpha_p^\sharp \in \Gamma(TM)$ are the Reeb fields.  It is known that a compact smooth $7$-manifold admits an $\Sp(1)$-structure if and only if it is orientable and spin \cite[$\S$3]{friedrich1997nearly}. \\

\indent Thus, given an $\Sp(1)$-structure on $M$, each tangent space can be split as $T_xM = \R A_p \oplus \Ker(\alpha_p)$, for any $p = 1,2,3$, and each $6$-plane $\Ker(\alpha_p)$ carries a special Hermitian structure $(\langle \cdot, \cdot \rangle, \mathsf{J}_p, \Omega_p, \Upsilon_p)$, where $\Omega_p := \langle \mathsf{J}_p \cdot, \cdot \rangle$ and $\Upsilon_p$ is a complex volume form on $\Ker(\alpha_p)$ that is type $(3,0)$ with respect to $\mathsf{J}_p$.  In fact, the tangent spaces decompose further as $T_xM = \R A_1 \oplus \R A_2 \oplus \R A_3 \oplus \mathsf{C}$, where the $4$-plane $\mathsf{C} := \Ker(\alpha_1, \alpha_2, \alpha_3)$ carries a special Hermitian structure in the natural way. \\  

\indent We make four remarks on $\Sp(1)$-structures.  First, since $\Sp(1) \leq \SO(4)$, every $\Sp(1)$-structure has an underlying $\SO(4)$-structure $(\varphi, \mathsf{A})$.  Explicitly, 
\begin{align*}
\varphi & := \alpha_1 \wedge \alpha_2 \wedge \alpha_3 - \alpha_1 \wedge \Omega_1 - \alpha_2 \wedge \Omega_2 - \alpha_3 \wedge \Omega_3 &
\mathsf{A} & := \text{span}(A_1, A_2, A_3).
\end{align*}
Second, note that an $\Sp(1)$-structure determines an entire $2$-sphere of almost contact metric structures.  Indeed, for each $w \in S^2 = \{(w_1, w_2, w_3) \in \R^3 \colon |w| = 1\}$, defining $\alpha_w := w_p\alpha_p$ and $\mathsf{J}_w := w_p\mathsf{J}_p$ yields an almost contact metric structure $(\langle \cdot, \cdot \rangle, \alpha_w, \mathsf{J}_w)$.  \\
\indent Third, if a $7$-manifold $M$ carries an $\Sp(1)$-structure $(g, (\alpha_1, \alpha_2, \alpha_3), (\mathsf{J}_1, \mathsf{J}_2, \mathsf{J}_3))$, then the Riemannian $8$-manifold $(\mathrm{C}(M), g_{\mathrm{C}})$ admits a triple $(I_1, I_2, I_3)$ of $g_{\mathrm{C}}$-compatible almost-complex structures with $I_1I_2 = I_3$ given by
\begin{align*}
I_pX & = \textstyle \mathsf{J}_pX - \alpha_p(X) r \partial_r, \ \ \text{ for } X \in TM, \\
\textstyle I_p(r\partial_r) & = A_p.
\end{align*}
In other words, $\mathrm{C}(M)$ inherits a conical $\Sp(2)$-structure (or \emph{almost hyper-Hermitian structure}).  Conversely, if an $8$-dimensional cone $\mathrm{C}(M)$ admits an $\Sp(2)$-structure $(g_{\mathrm{C}},(\omega_1, \omega_2, \omega_3), (I_1, I_2, I_3) )$ for which each $I_p(r\partial_r)$ is tangent to $M$ and each $\mathscr{L}_{r\partial_r}I_p = 0$, then its link $M$ inherits an $\Sp(1)$-structure via
\begin{align*}
\alpha_p & = \textstyle \partial_r\,\lrcorner\,\omega_p &
\mathsf{J}_p & = \begin{cases} I_p & \mbox{ on }\text{Ker}(\alpha_p) \\
0 & \mbox{ on } \R A_p.
\end{cases}
\end{align*}
This leads to the following definition:

\begin{defn} An $\Sp(1)$-structure on $M$ is called \emph{$3$-Sasakian} if the induced $\Sp(2)$-structure on $\mathrm{C}(M)$ is hyperk\"{a}hler.
\end{defn}

\begin{rmk} \label{rmk:QuatSas23Sas}
    If an $\Sp(1)$-structure on $M$ is 3-Sasakian, then its underlying $\SO(4)$-structure is quaternion-Sasakian. Conversely, a quaternion-Sasakian $\SO(4)$-structure together with a trivialization $\left(w_1, w_2, w_3 \right)$ of the bundle $\mathsf{A}$ gives rise to a unique 3-Sasakian $\Sp(1)$-structure sharing the same metric $g$ and with $\alpha_i = w_i^\flat.$
\end{rmk}

\begin{rmk} On an $8$-manifold $Q$, every $\Sp(2)$-structure $(g, (I_1, I_2, I_3))$ induces an $\Sp(2)\Sp(1)$-structure, essentially by forgetting the triple $(I_1, I_2, I_3)$ and retaining only the rank $3$ subbundle $\text{span}(I_1, I_2, I_3) \subset \End(TQ)$.  In this way, for example, every hyperk\"{a}hler structure underlies a quaternion-K\"{a}hler structure. This passage from an $\Sp(2)$- to an $\Sp(2)\Sp(1)$-structure on an $8$-manifold is analogous to the passage from an $\Sp(1)$- to an $\SO(4)$-structure on a $7$-manifold. 
\end{rmk}

\indent Fourth, we note that $7$-manifolds with $\Sp(1)$-structures admit several distinguished classes of submanifolds.  Here are some of them:

\begin{defn} Let $N^k \subset M^7$ be a $k$-dimensional submanifold for $k = 1,3,5$. \\
\indent Say $N$ is \emph{$\mathsf{J}_1$-CR} if each tangent space $T_xN$ contains the Reeb vector $A_1$ and is $\mathsf{J}_1$-invariant. An equivalent condition is that $\mathrm{C}(N)$ is an $I_1$-complex submanifold of $\mathrm{C}(M)$.
\end{defn}

\begin{defn} Let $N^3 \subset M^7$ be a $3$-dimensional submanifold.
\begin{itemize}
    \item Say $N$ is \emph{$\mathsf{J}_1$-Legendrian} if $\alpha_1|_N = 0$ and $\Omega_1|_N = 0$.  An equivalent condition is that $\mathrm{C}(N)$ is an $I_1$-Lagrangian $4$-fold in $\mathrm{C}(M)$.
    \item Say $N$ is \emph{$\mathsf{J}_1$-special Legendrian} if $\alpha_1|_N = 0$, $\Omega_1|_N = 0$, and $\text{Im}(\Upsilon_1)|_N = 0$. An equivalent condition is that $\mathrm{C}(N)$ is an $I_1$-special Lagrangian $4$-fold in $\mathrm{C}(M)$. 
    \item Say $N$ is \emph{$\mathsf{J}_1$-complex Legendrian} if it is $\mathsf{J}_1$-CR and $\mathsf{J}_2$-Legendrian and $\mathsf{J}_3$-Legendrian.  An equivalent condition is that $\mathrm{C}(N)$ is an $I_1$-complex Lagrangian $4$-fold in $\mathrm{C}(M)$.
\end{itemize}
\end{defn}

\indent The relevance of these submanifolds to our work comes from the following basic observation:

\begin{prop} \label{prop:trivial-associatives} Let $N^3 \subset M^7$ be a $3$-dimensional submanifold, where $M$ carries an $\Sp(1)$-structure.  If $N$ is $\mathsf{J}_w$-CR or $\mathsf{J}_w$-special Legendrian for some $w \in S^2$, then $N$ is an associative $3$-fold (with respect to the underlying $\G_2$-structure on $M$).
\end{prop}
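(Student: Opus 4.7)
Plan: First I would exploit the $SO(3)$-symmetry of the $\Sp(1)$-structure to reduce to the case $w=(1,0,0)$. The $\G_2$-form $\varphi = \alpha_{123} - \sum_p \alpha_p \wedge \Omega_p$ is invariant under simultaneous $SO(3)$-rotation of $(\alpha_1,\alpha_2,\alpha_3)$ and $(\mathsf{J}_1,\mathsf{J}_2,\mathsf{J}_3)$ (since $\alpha_{123}$ is a determinant and $\sum_p \alpha_p \wedge \Omega_p$ is an $SO(3)$-trace), and any such rotation maps $\mathsf{J}_w$-CR (resp.\ $\mathsf{J}_w$-special Legendrian) submanifolds to $\mathsf{J}_1$-CR (resp.\ $\mathsf{J}_1$-special Legendrian) ones. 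Since associativity is a pointwise condition, the claim becomes a linear-algebra assertion at each $x\in N$.

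The key tool is the alternative description of $\varphi$ via the $\SU(3)$-structure on $\ker(\alpha_1)$: because $\Sp(1)\subset \SU(3)\subset \G_2$, the $\Sp(1)$-structure endows the $6$-plane $\ker(\alpha_1)$ with an $\SU(3)$-structure $(\mathsf{J}_1,\Omega_1,\Upsilon_1)$, and pointwise one has
\begin{equation*}
    \varphi \;=\; \epsilon\,\alpha_1\wedge\Omega_1 \;+\; \mathrm{Re}(\Upsilon_1)
\end{equation*}
for some sign $\epsilon\in\{\pm 1\}$, where $\Omega_1$ now denotes the K\"{a}hler form on the full $6$-plane $\ker(\alpha_1)$ (not merely on $\mathsf{C}$). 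I would derive this identity by a direct pointwise comparison with the given formula for $\varphi$, computing $\mathsf{J}_1$ on $\mathrm{span}(A_2,A_3)$ via $\mathsf{J}_p(A_q) = -\epsilon_{pqr}A_r$ to handle the cross term $-\alpha_1\wedge\alpha_2\wedge\alpha_3$ coming from the $A$-part of $\Omega_1$; the identity is simply the standard decomposition of a $\G_2$-form into a Reeb $1$-form wedged with a K\"{a}hler form plus the real part of a $(3,0)$-form.

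Given this identity, both cases collapse to one-line computations. In the $\mathsf{J}_1$-CR case, choose an oriented ON frame $(A_1,u,\mathsf{J}_1 u)$ for $T_xN$ with $u\in\ker(\alpha_1)$; then $\mathrm{Re}(\Upsilon_1)$ vanishes on this frame (since $\Upsilon_1\in\Lambda^3(\ker\alpha_1)^*$ annihilates $A_1$), while $(\alpha_1\wedge\Omega_1)(A_1,u,\mathsf{J}_1 u) = \Omega_1(u,\mathsf{J}_1 u) = |\mathsf{J}_1 u|^2 = 1$, so $\varphi|_{T_xN} = \pm\vol_{T_xN}$. In the $\mathsf{J}_1$-special Legendrian case, choose an ON frame $(u_1,u_2,u_3)\subset\ker(\alpha_1)$ for $T_xN$; then $\alpha_1\wedge\Omega_1$ vanishes on this frame because $\alpha_1|_N=0$, while the standard Calabi--Yau calibration identity yields $\mathrm{Re}(\Upsilon_1)|_N = \pm\vol_N$ from the hypotheses $\Omega_1|_N=0$ and $\mathrm{Im}(\Upsilon_1)|_N=0$. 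In both cases $\varphi|_{T_xN} = \pm\vol_{T_xN}$, so $N$ is associative. The only real obstacle is the sign bookkeeping in the decomposition identity, which is a routine finite-dimensional check; once that identity is in place, the proof is immediate.
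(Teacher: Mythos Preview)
Your proof is correct. The paper does not actually give a proof of this proposition; it is stated as a ``basic observation'' and left to the reader. Your approach---reducing to $w=(1,0,0)$ via the $\SO(3)$-invariance of $\varphi$, then using the standard $\G_2 \to \SU(3)$ decomposition $\varphi = \epsilon\,\alpha_1\wedge\Omega_1 + \mathrm{Re}(\Upsilon_1)$ along the Reeb direction---is the natural direct argument, and the two cases fall out exactly as you describe.

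One alternative route is implicitly suggested by the paper's equivalent characterizations in the definitions preceding the proposition: since $N$ is $\mathsf{J}_1$-CR (resp.\ $\mathsf{J}_1$-special Legendrian) if and only if $\mathrm{C}(N)$ is an $I_1$-complex (resp.\ $I_1$-special Lagrangian) $4$-fold in $\mathrm{C}(M)$, one can instead invoke the pointwise linear-algebra fact that, under the inclusions $\Sp(2)\leq\SU(4)\leq\Spin(7)$, both $I_1$-complex $4$-planes and $I_1$-special Lagrangian $4$-planes in $\R^8$ are Cayley, and then use that the link of a Cayley cone is associative. This is morally the same computation transported to the cone; your intrinsic $7$-dimensional argument is arguably cleaner since it avoids passing to $\mathrm{C}(M)$ and back. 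Either way, the content is purely linear-algebraic and your sign caveat is harmless since associativity is insensitive to orientation.
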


When $M$ carries an $\Sp(1)$-structure, we are interested in constructing associative $3$-folds in $M$ that are \emph{neither} CR nor special Legendrian with respect to any $\mathsf{J}_w$ for $w \in S^2$. The compact examples we construct are obtained by ``twisting" complex Legendrian $3$-folds by a meromorphic function.

\subsection{Nearly-Parallel $\G_2$-Structures} \label{sec:Nearly-Parallel}

\indent \indent We now return to $\G_2$ geometry.  Within the class of co-closed $\G_2$-structures, the subclass of nearly-parallel $\G_2$-structures is particularly important in view of its connection to $\Spin(7)$ geometry.  Indeed, as the following proposition shows, nearly-parallel $\G_2$-structures arise naturally on links of $8$-dimensional cones equipped with conical torsion-free $\Spin(7)$-structures.

\begin{prop}[\cite{bar1993real}] Let $M^7$ be a smooth $7$-manifold.  There is a bijection:
\begin{align*}
\{\text{nearly-parallel }\G_2\text{-str. on }M \text{ with }\lambda = 4\} & \longleftrightarrow \{\text{conical torsion-free } \Spin(7)\text{-str. on }\mathrm{C}(M)\} \\
\varphi & \longmapsto r^3\,dr \wedge \varphi + r^4\ast\! \varphi \\
\textstyle \left.(\partial_r \,\lrcorner\,\Phi)\right|_M & \longmapsfrom \Phi
\end{align*}
\end{prop}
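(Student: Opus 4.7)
The plan is to verify three things in turn: (i) the forward map sends a nearly-parallel $\G_2$-structure on $M$ to a conical $\Spin(7)$-structure on $\mathrm{C}(M)$ at the algebraic (pointwise) level, (ii) the torsion-free condition $d\Phi = 0$ on $\mathrm{C}(M)$ is equivalent to $d\varphi = 4\ast\varphi$ on $M$, and (iii) the two maps are mutual inverses. The main computation is (ii); everything else is formal.

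For (i), fix $x \in M$ and a $\G_2$-adapted coframe $(e^1,\ldots,e^7)$ at $x$ so that $\varphi|_x$ takes the model form. Adjoining $e^0 := dr|_{(1,x)}$ produces an oriented coframe on $T_{(1,x)}\mathrm{C}(M)$ in which $\Phi|_{(1,x)} = e^0 \wedge \varphi|_x + \ast_M\varphi|_x$ is precisely the model Cayley $4$-form; this is the standard embedding $\G_2 \hookrightarrow \Spin(7)$ as the stabilizer of $e^0$. The factors of $r^3$ and $r^4$ in $\Phi = r^3 dr \wedge \varphi + r^4 \ast\!\varphi$ are exactly what is required so that, at an arbitrary point $(r,x)$, the tensor $\Phi$ has the model $\Spin(7)$ form relative to the rescaled coframe $(dr, r e^1, \ldots, r e^7)$. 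One simultaneously observes that $\mathcal{L}_{r\partial_r}\Phi = 4\Phi$ and that the induced metric is $dr^2 + r^2 g_\varphi$, so $\Phi$ is conical in the sense of the paper.

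For (ii), decompose $d$ on $\mathrm{C}(M) = \R^+ \times M$ as $d = dr \wedge \partial_r + d_M$ (where $d_M$ acts slicewise). Using that $\varphi$ and $\ast\varphi$ are $r$-independent as differential forms on $M$, one computes
\begin{align*}
d\Phi &= d\!\left( r^3 dr \wedge \varphi\right) + d\!\left(r^4 \ast\!\varphi\right) \\
      &= -r^3\, dr \wedge d_M\varphi + 4r^3\, dr \wedge \ast\varphi + r^4\, d_M\!\ast\!\varphi.
\end{align*}
The three terms on the right live in distinct bidegrees under the splitting $\Omega^\bullet(\mathrm{C}(M)) = \Omega^\bullet(M) \oplus dr \wedge \Omega^\bullet(M)$, so $d\Phi = 0$ is equivalent to the pair of equations $d_M\varphi = 4 \ast\!\varphi$ and $d_M\!\ast\!\varphi = 0$. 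The second equation follows from the first by applying $d_M$ to both sides, so it is automatic. Thus $d\Phi = 0 \iff d\varphi = 4\ast\varphi$, which is exactly the nearly-parallel condition with $\lambda = 4$.

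For (iii), observe that if $\Phi = r^3 dr \wedge \varphi + r^4 \ast\!\varphi$, then contracting with $\partial_r$ and restricting to the link $\{r=1\} \cong M$ gives $(\partial_r \lrcorner\, \Phi)|_M = \varphi$, proving the left inverse identity. Conversely, given a conical torsion-free $\Spin(7)$-structure $\Phi$ on $\mathrm{C}(M)$, define $\varphi := (\partial_r \lrcorner\, \Phi)|_M$; conicality together with the algebraic $\Spin(7)$ type (which on a conical structure forces a splitting along the radial direction identical to the one in (i)) implies that $\Phi = r^3 dr \wedge \varphi + r^4 \ast\!\varphi$, so the map is a right inverse as well. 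The main conceptual obstacle --- which is handled by the computation above --- is to observe that of the two equations $d_M\varphi = 4\ast\!\varphi$ and $d_M\!\ast\!\varphi = 0$ produced by $d\Phi = 0$, only the first is independent, so nearly-parallel is not just necessary but sufficient.
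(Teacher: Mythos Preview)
The paper does not supply its own proof of this proposition: it is stated with a citation to B\"ar \cite{bar1993real} and then used without further justification. Your argument is therefore not being compared against anything in the paper, but on its own merits it is correct. The decomposition in step (ii) is the standard one and the computation is accurate; the observation that $d_M\!\ast\!\varphi = 0$ is redundant given $d_M\varphi = 4\ast\varphi$ is exactly the point. Step (iii) in the backward direction is slightly brisk---you are implicitly using that ``conical'' entails both $\mathcal{L}_{r\partial_r}\Phi = 4\Phi$ and that the induced metric is the cone metric, so that $\partial_r$ is unit and its $\Spin(7)$-stabilizer is $\G_2$, forcing the splitting $\Phi = r^3\,dr\wedge\varphi + r^4\ast\!\varphi$ with $\varphi$ a $\G_2$-structure---but this is routine and the sketch you give is adequate.
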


\indent Now, in light of the inclusions $\Sp(2) \leq \SU(4) \leq \Spin(7)$, there is a hierarchy of nearly-parallel metrics on $M$ according to the holonomy group of the $8$-manifold $(\text{C}(M), g_{\mathrm{C}})$.  Precisely:

\begin{defn} Let $(M^7, g)$ be a Riemannian $7$-manifold.
\begin{itemize}
    \item Say $g$ is \emph{nearly-parallel} if $\Hol(g_{\mathrm{C}}) \leq \Spin(7)$.  Such metrics are of the form $g = g_\varphi$ for some nearly-parallel $\G_2$-structure $\varphi$ on $M$.  We say $g$ is \emph{proper nearly-parallel} if $\Hol(g_{\mathrm{C}}) = \Spin(7)$.
    \item Say $g$ is \emph{Sasaki-Einstein} if $\text{Hol}(g_{\mathrm{C}}) \leq \SU(4)$.  Such metrics underlie a Sasaki-Einstein $\SU(3)$-structure on $M$.
    \item Say $g$ is \emph{3-Sasakian} if $\text{Hol}(g_{\mathrm{C}}) \leq \Sp(2)$.  Such metrics underlie a $3$-Sasakian structure on $M$.
\end{itemize}
\end{defn}

\begin{rmk} Note that the structure groups $\Sp(1) \leq \SU(3) \leq \G_2$ are the stabilizers of a unit vector in $\R^8$ under the standard actions of $\Sp(2) \leq \SU(4) \leq \Spin(7)$, respectively.
\end{rmk}

\indent With that above terminology in place, we now prove:

\begin{proof}[Proof of Proposition \ref{prop:Quat-Sas-G2}(a)] Let $(\varphi, \mathsf{A})$ be a quaternion-Sasakian structure on $M$.  By definition, the $8$-manifold $\mathsf{C}(M)$ admits a conical quaternion-K\"{a}hler structure, so the cone metric $g_{\mathrm{C}}$ is quaternion-K\"{a}hler.  Since any conical Einstein metric is Ricci-flat, it follows that $g_{\mathrm{C}}$ is hyperk\"{a}hler, and hence $g$ is $3$-Sasakian, whence $g$ is nearly-parallel.  We conclude there exists a nearly-parallel $\G_2$-structure $\widehat{\varphi}$ whose induced metric is $g$.  That is, $\varphi$ is isometric to the nearly-parallel $\widehat{\varphi}$.
\end{proof}

\indent In the remainder of this section, we briefly discuss the two extreme ends of the hierarchy, namely $3$-Sasakian $7$-manifolds and proper nearly-parallel $7$-manifolds.  Our discussion of $3$-Sasakian $7$-manifolds follows \cite[$\S$13.2 -- $\S$13.5]{boyer2007sasakian}.

\subsubsection{$3$-Sasakian $7$-Manifolds} \label{sec:3-Sasakian}

\indent \indent Let $M^7$ be a compact $7$-manifold equipped with a $3$-Sasakian structure $(\langle \cdot, \cdot\rangle, (\alpha_1, \alpha_2, \alpha_3)$, $(\mathsf{J}_1, \mathsf{J}_2, \mathsf{J}_3))$.  As we now explain, the geometry of $M$ can be understood in terms of a $3$-dimensional foliation $\mathcal{F}_A \subset M$ and an $S^2$-family of $1$-dimensional foliations $\mathcal{F}_w \subset \mathcal{F}_A \subset M$.\\
\indent Since $M$ is compact, the Reeb fields $A_1, A_2, A_3$ are complete.  Moreover, they are an orthonormal set of Killing vector fields, and define a locally free $\Sp(1)$-action on $M$.  In turn, this yields a quasi-regular $3$-dimensional \emph{canonical foliation} $\mathcal{F}_A$ on $M$ whose leaf space $X := M/\mathcal{F}_A$ is a compact $4$-orbifold.  We refer to the leaves of $\mathcal{F}_A$ as \emph{canonical leaves}. 

\begin{thm}[{\cite[$\S$13.3.11, 13.3.13]{boyer2007sasakian}}] Let $M$ be a $3$-Sasakian $7$-manifold such that the Reeb fields $A_1, A_2, A_3$ are complete. \\
\indent (a) The canonical leaves are totally-geodesic, spherical space forms $S^3/\Gamma$ of constant curvature, where $\Gamma \leq \Sp(1)$ is a finite subgroup.  Moreover, the generic leaves are either $S^3$ or $\SO(3)$. \\
\indent (b) The leaf space $X = M/\mathcal{F}_A$ admits the structure of a positive quaternion-K\"{a}hler $4$-orbifold such that the natural projection $h \colon M \to X$ is a principal $G$-orbibundle for $G = \SU(2)$ or $\SO(3)$, and a Riemannian orbifold submersion.  Further, $\alpha = (\alpha_1, \alpha_2, \alpha_3)$ is a connection form for $h$.
\end{thm}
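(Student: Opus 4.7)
The plan is to exploit the $\Sp(1)$-action generated by the Reeb fields. Using the defining relations of an $\Sp(1)$-structure together with the Sasakian identity $\nabla A_p = -\mathsf{J}_p$, I would first verify
\[
[A_i, A_j] = \nabla_{A_i} A_j - \nabla_{A_j} A_i = -\mathsf{J}_i(A_j) + \mathsf{J}_j(A_i) = 2\epsilon_{ijk} A_k,
\]
so the $A_p$ span an $\mathfrak{sp}(1)$-subalgebra of Killing fields. Completeness of the $A_p$ then integrates this to a smooth $\Sp(1)$-action on $M$. Because the $A_p$ are orthonormal and linearly independent at every point, this action is locally free with finite isotropies $\Gamma_x \leq \Sp(1)$, and the leaves of $\mathcal{F}_A$ are the orbits $\Sp(1)/\Gamma_x$.

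For part (a), I would next use the same Sasakian identity to compute $\nabla_{A_i} A_j = -\mathsf{J}_i(A_j) = \epsilon_{ijk} A_k \in \mathsf{A}$, so the second fundamental form of each leaf vanishes and the leaves are totally geodesic. The $A_p$ then provide a right-$\Sp(1)$-invariant framing of $\Sp(1)/\Gamma_x$ with constant pointwise inner products, forcing the induced metric to be a bi-invariant metric on $\Sp(1)$ descending to $\Sp(1)/\Gamma_x$; this is a round metric of constant positive sectional curvature. Finally, for a locally free action of $\Sp(1)$ preserving an $\Sp(1)$-structure, the principal isotropy must lie in the center $\Z_2 \leq \Sp(1)$ (the larger closed subgroups $S^1$ and $\Sp(1)$ are ruled out by local freeness, and the slice theorem combined with the adjoint rotation of $(A_1, A_2, A_3)$ rules out larger finite subgroups generically), yielding generic leaves $S^3$ or $\SO(3)$.

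For part (b), I would pass to the cone and use that $\mathrm{C}(M)$ is hyperk\"ahler. The $\Sp(1)$-action on $M$ extends conically to $\mathrm{C}(M)$ preserving the hyperk\"ahler triple $(\omega_1, \omega_2, \omega_3)$, with a hyperk\"ahler moment map proportional to $r^2(\alpha_1, \alpha_2, \alpha_3)$. The hyperk\"ahler quotient $\mathrm{C}(M)/\!/\!/\Sp(1)$ is then identified with the metric cone over $X = M/\mathcal{F}_A$, so by the Swann bundle correspondence $X$ inherits a positive quaternion-K\"ahler orbifold structure. Properness of the $\Sp(1)$-action makes $X$ a Hausdorff orbifold and $h \colon M \to X$ a principal $G$-orbibundle for $G \in \{\SU(2), \SO(3)\}$ according to the generic isotropy; since the fibers are totally geodesic and the $A_p$ are orthonormal Killing fields, $h$ is automatically a Riemannian orbifold submersion.

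To establish that $\alpha = (\alpha_1, \alpha_2, \alpha_3)$ is a connection $1$-form, I would identify $\mathfrak{sp}(1) \cong \R^3$ via the basis dual to $(A_1, A_2, A_3)$; then $\alpha_p(A_q) = \delta_{pq}$ and $\alpha_p|_{\mathsf{C}} = 0$ show that $\alpha$ reproduces the $\mathfrak{sp}(1)$-generator on vertical vectors and annihilates the horizontal distribution $\mathsf{C}$, while the equivariance $R_g^*\alpha = \Ad_{g^{-1}}\alpha$ follows from the rotation of $(A_1, A_2, A_3)$ by the adjoint representation (inherited from the analogous rotation of $(\omega_1, \omega_2, \omega_3)$ on $\mathrm{C}(M)$). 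The main obstacle throughout is the verification that $X$ is genuinely quaternion-K\"ahler rather than merely almost-quaternionic-Hermitian: this is where the Swann-bundle machinery (or, alternatively, a careful O'Neill-type computation on $\mathsf{C}$ exhibiting the reduction of holonomy to $\Sp(1) \cdot \Sp(1) \leq \SO(4)$) does the real work.
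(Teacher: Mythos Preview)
The paper does not supply a proof of this theorem where it is stated; it is quoted as background from Boyer--Galicki \cite[\S13.3.11, 13.3.13]{boyer2007sasakian}. What the paper does do, later in \S\ref{sec:4dimquotient}, is re-derive some of the content via the structure equations of Proposition~\ref{prop:structeqns}: the Frobenius condition for $\mathsf{A}$, the constant positive curvature of the leaves, and the self-dual positive Einstein (hence positive quaternion-K\"ahler) nature of $g_X$ all fall out of the $d\alpha$, $d\beta$, $d\zeta$, $d\nu$ equations on the $\SO(4)$-coframe bundle $\mathcal{B}$. So the paper's ``proof,'' such as it is, is a moving-frames computation rather than the Lie-theoretic and cone-based argument you outline.

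Your approach is essentially the one in Boyer--Galicki and is sound, but one step needs correction. You write that the $\Sp(1)$-action on $\mathrm{C}(M)$ \emph{preserves} the hyperk\"ahler triple $(\omega_1,\omega_2,\omega_3)$ and has a hyperk\"ahler moment map. In fact the Reeb $\Sp(1)$-action \emph{rotates} the complex structures: the flow of $A_p$ fixes $I_p$ but rotates $I_q$, $I_r$ into one another, so this is a \emph{permuting} action in Swann's terminology, not a triholomorphic one, and the standard hyperk\"ahler quotient does not apply directly. The correct mechanism is the Swann-bundle correspondence itself: $\mathrm{C}(M)$, being a hyperk\"ahler cone with this permuting $\Sp(1)$, is the total space of the $\HH^*/\{\pm 1\}$-bundle over a positive QK orbifold, and $X$ is recovered as $\mathrm{C}(M)/\HH^*$ (equivalently $M/\Sp(1)$). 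Once that is fixed, the rest of your sketch---the bracket and covariant-derivative identities for (a), and the connection-form verification for (b)---is correct. The paper's moving-frames route avoids invoking Swann's theorem by computing the curvature of $g_X$ directly from the structure equations, at the cost of being less conceptual.
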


\indent Interestingly, this process can be inverted:

\begin{thm}[{\cite[$\S$13.3.14, 13.3.16]{boyer2007sasakian}} Konishi Inversion] Let $\hat{X}$ be a positive quaternion-K\"{a}hler $4$-orbifold, and let $\epsilon \in H^2_{\mathrm{orb}}(\hat{X}, \Z_2)$ be its Marchiafava-Romani class.  Then: \\
\indent (a) There is a principal $\SO(3)$-orbibundle $\hat{M} \to \hat{X}$ for which $\hat{M}$ admits a $3$-Sasakian structure. \\
\indent (b) The $\SO(3)$-orbibundle $\hat{M} \to \hat{X}$ lifts to a principal $S^3$-orbibundle if and only if $\epsilon = 0$.  In this case, the $3$-Sasakian structure on $\hat{M}$ lifts to the total space of the $S^3$-orbibundle.
\end{thm}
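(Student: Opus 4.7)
The plan is to build $\hat{M}$ directly from the quaternion-K\"{a}hler data on $\hat{X}$ and to identify its metric cone with a standard hyperk\"{a}hler object, from which the $3$-Sasakian property will follow. For the first step, I would use the canonical rank-$3$ subbundle $Q \subset \End(T\hat{X})$ of local almost-complex structures that is part of the defining data of the quaternion-K\"{a}hler structure, with its natural fiber metric and orientation. Define $\pi \colon \hat{M} \to \hat{X}$ to be the oriented orthonormal frame orbibundle of $Q$, a principal $\SO(3)$-orbibundle. The quaternion-K\"{a}hler condition says that the Levi-Civita connection of $\hat{X}$ preserves $Q$, so it induces a principal connection $\theta \in \Omega^1(\hat{M}, \mathfrak{so}(3))$; choosing an orthonormal basis of $\mathfrak{so}(3)$ yields three $1$-forms $\alpha_1, \alpha_2, \alpha_3$ dual to the fundamental vector fields $A_1, A_2, A_3$ of the $\SO(3)$-action. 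Equip $\hat{M}$ with the connection metric $g = \pi^* g_{\hat{X}} + \alpha_1^2 + \alpha_2^2 + \alpha_3^2$ and define the endomorphisms $\mathsf{J}_p$ by horizontally lifting the tautological almost-complex structures $(J_1, J_2, J_3)$ that a point $e \in \hat{M}$ provides on $T_{\pi(e)}\hat{X}$, together with the rule $\mathsf{J}_p(A_q) = -\epsilon_{pqr} A_r$. A routine computation verifies that $(g, \alpha_p, \mathsf{J}_p)$ satisfies the $\Sp(1)$-structure identities.

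The second step is to verify that this $\Sp(1)$-structure is $3$-Sasakian, i.e., that the cone $(\mathrm{C}(\hat{M}), dr^2 + r^2 g)$ is hyperk\"{a}hler. I would do this by identifying $\mathrm{C}(\hat{M})$ with Swann's bundle $\mathcal{U}(\hat{X})$ minus its zero section. By construction $\mathcal{U}(\hat{X}) = (\hat{M} \times \HH^\times)/\SO(3)$ is canonically diffeomorphic to $\R^+ \times \hat{M}$, with the radial factor given by $|q|$ for $q \in \HH$. Swann's theorem produces a canonical hyperk\"{a}hler triple $(I_1, I_2, I_3)$ on $\mathcal{U}(\hat{X})$ whose metric is precisely the cone metric; under the identification, the restriction $(\mathsf{J}_p, \alpha_p, A_p) = (I_p|_{TM}, \iota_{\partial_r}\omega_p, I_p(\partial_r))$ recovers exactly the $\Sp(1)$-data defined in the first step. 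This proves (a). Swann's construction carries over verbatim to the orbifold category by working in local orbifold uniformizing charts.

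For (b), the lift of $\hat{M} \to \hat{X}$ to a principal $S^3 = \Sp(1)$-orbibundle is a lift of structure group from $\SO(3)$ to its double cover, whose obstruction is the second Stiefel-Whitney class $w_2(Q) \in H^2_{\mathrm{orb}}(\hat{X}, \Z_2)$. By definition of the Marchiafava-Romani class, this is exactly $\epsilon$. When $\epsilon = 0$ the lift $\tilde{M} \to \hat{X}$ exists and comes with a canonical $2{:}1$ covering $\tilde{M} \to \hat{M}$ intertwining the homomorphism $\Sp(1) \to \SO(3)$; the $3$-Sasakian structure on $\hat{M}$ pulls back to $\tilde{M}$ via this local isometry.

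The main obstacle I expect is the second step, the identification of $\mathrm{C}(\hat{M})$ with Swann's bundle and the verification that Swann's hyperk\"{a}hler triple restricts to the $\Sp(1)$-structure built in the first step. Alternatively, one could bypass Swann's bundle by computing the structure equations of $(g, \alpha_p, \mathsf{J}_p)$ directly in a local adapted frame, using the $\Sp(1)\Sp(1)$-reduction of the Levi-Civita connection on $\hat{X}$, in the spirit of $\S$\ref{sec:MovingFramesQS}; this is purely mechanical but longer. Adapting both arguments to the orbifold setting is routine provided one works equivariantly in local uniformizing charts.
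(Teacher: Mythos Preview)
The paper does not supply its own proof of this theorem: it is quoted verbatim from Boyer--Galicki \cite[\S13.3.14, 13.3.16]{boyer2007sasakian} as background in \S\ref{sec:3-Sasakian}. Your Swann-bundle argument for part (a) and the obstruction-theory argument for part (b) are correct and are essentially the arguments given in that reference.

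That said, the paper does carry out a closely related construction in \S\ref{sec:4dimquotient}, and it is worth noting that this matches your \emph{alternative} route rather than your primary one. There the authors take a self-dual positive Einstein $4$-orbifold $X$, form $\mathcal{B} = \mathcal{F}_X \times \SO(3)$, read off the tautological and Levi-Civita forms $(\alpha,\beta,\zeta,\nu)$, and verify directly that the structure equations (\ref{eq:3SasakStruct}) hold, so that by Remark \ref{rmk:so4structdisc} the quotient $M = \mathcal{B}/\SO(4)$ inherits a quaternion-Sasakian structure. This is exactly the ``compute the structure equations in a local adapted frame'' approach you flagged as a backup; it bypasses Swann's bundle entirely and is the argument most in the spirit of the present paper. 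Your main route via $\mathcal{U}(\hat{X})$ is cleaner conceptually and gives the hyperk\"ahler cone in one stroke, while the paper's moving-frames version is more hands-on but dovetails with the rest of \S\ref{sec:Squashed3Sas}. Neither approach addresses part (b), which is pure bundle theory and is handled exactly as you describe.
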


\indent As above, let $M$ be a compact $7$-manifold with a $3$-Sasakian structure.  For each $w \in S^2$, the Reeb field $A_w$ is a complete, unit-length, Killing vector field, and defines a locally free $S^1$-action on $M$.  Thus, $A_w$ yields a quasi-regular $1$-dimensional foliation $\mathcal{F}_w \subset \mathcal{F}_A \subset M$ whose leaves are closed geodesics that we call \emph{Hopf circles}, and whose leaf space $Z_w := M/\mathcal{F}_w$ is a compact $6$-orbifold.  In fact:

\begin{thm}[{\cite[$\S$7.5.1, 13.3.1]{boyer2007sasakian}}]\label{thm:twistspacequot} Let $M$ be a compact $3$-Sasakian $7$-manifold. \\
\indent (a) The projection $p_w \colon M \to Z_w$ is a principal $S^1$-orbibundle with connection $1$-form $\alpha_w$. \\
\indent (b) For $w, w' \in S^2$, there is a diffeomorphism $Z_{w} \approx Z_{w'}$.  In fact, each $Z_w$ may be identified with the (orbifold) twistor space $Z$ of the quaternion-K\"{a}hler 4-orbifold $X = M/\mathcal{F}_A$.
\end{thm}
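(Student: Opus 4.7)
The plan for part (a) is direct. Since $M$ is compact and $A_w := \sum_p w_p A_p$ is a unit-length $\R$-linear combination of the Killing fields $A_1, A_2, A_3$, it is itself a unit Killing field, hence complete. Its flow defines an isometric $S^1$-action that is locally free because $A_w$ is nowhere zero. Standard theory of proper compact group actions then endows $Z_w := M/\mathcal{F}_w$ with the structure of a compact $6$-orbifold and makes $p_w \colon M \to Z_w$ a principal $S^1$-orbibundle. To check that $\alpha_w := \sum_p w_p \alpha_p$ is a connection form, I would observe that $\alpha_w(A_w) = \sum_{p,q} w_p w_q \alpha_p(A_q) = \sum_p w_p^2 = 1$, while $\mathcal{L}_{A_w}\alpha_w = 0$ follows since $\alpha_w$ is the metric dual of the Killing field $A_w$ and Killing fields preserve their dual 1-forms.

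For part (b), the plan is to realize each $Z_w$ as a quotient of $M$ by a one-parameter subgroup of the structure group of $h \colon M \to X$. That structure group is $G \in \{\SU(2), \SO(3)\}$, and under a local trivialization the Reeb triple $(A_1, A_2, A_3)$ corresponds to the fundamental vector fields of the right $G$-action induced by an orthonormal basis of $\mathfrak{g} \cong \R^3$. Hence $A_w$ generates a circle subgroup $S^1_w \leq G$ with Lie algebra $\R w$, and $Z_w = M/S^1_w$. Since $G$ acts transitively on the unit sphere of $\mathfrak{g}$ via the adjoint representation, all subgroups $S^1_w$ are conjugate in $G$, yielding diffeomorphisms $Z_w \approx Z_{w'}$ for every $w, w' \in S^2$. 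To identify $Z_w$ with the twistor space $Z$ of $X$, I would recall that $Z$ is realized as the unit sphere orbibundle of the rank-$3$ subbundle $\mathcal{E} \subset \End(TX)$ of locally defined quaternionic almost complex structures; this in turn is the associated orbibundle $M \times_G (G/S^1_w)$. Since $G/S^1_w \cong S^2$ via the adjoint orbit, we obtain $M \times_G (G/S^1_w) = M/S^1_w = Z_w$.

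The main technical obstacle will be the orbifold bookkeeping. When the Marchiafava-Romani class $\epsilon \in H^2_{\mathrm{orb}}(X, \Z_2)$ is nontrivial, the structure group $G$ must be taken as $\SO(3)$ rather than $\SU(2)$, so that antipodal points $\pm w \in S^2$ produce the \emph{same} circle $S^1_w \leq \SO(3)$ rather than distinct ones. One must then check that the twistor space $Z$ is consistently modeled on the adjoint $\SO(3)$-orbit $S^2 = \SO(3)/S^1$ in that case, which it is since $\mathcal{E}$ is associated to the $\SO(3)$-frame bundle of the quaternion-K\"ahler orbifold. A related subtlety is verifying that $p_w$ is a principal orbibundle at the singular locus of $X$: at a point with finite isotropy $\Gamma \leq G$, one must confirm that the uniformizing chart for $h$ near $h^{-1}(x) \subset M$ descends compatibly through the commuting $S^1_w$-action. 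These orbifold checks are routine given the Konishi inversion picture recalled above, but they are what makes the identification $Z \approx Z_w$ more than a formal consequence of the smooth case.
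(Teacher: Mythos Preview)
The paper does not supply its own proof of this theorem: it is quoted verbatim as a result from Boyer--Galicki \cite[\S7.5.1, 13.3.1]{boyer2007sasakian}, so there is nothing in the paper to compare your argument against.

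That said, your outline is essentially correct and is the standard argument one finds in the literature. One small point worth tightening in part (a): you assert that the flow of $A_w$ ``defines an isometric $S^1$-action,'' but periodicity of the flow is not automatic from $A_w$ being a nowhere-vanishing Killing field. What makes it periodic is precisely the structure you invoke later in part (b): the Reeb triple generates a locally free action of the compact group $G \in \{\SU(2),\SO(3)\}$, and $A_w$ is the fundamental vector field of a one-parameter subgroup of $G$, hence a circle. It would be cleaner to state this at the outset rather than to treat (a) as self-contained and then retroactively explain the group-theoretic picture in (b). Your identification $Z_w = M/S^1_w \cong M \times_G (G/S^1_w) \cong M \times_G S^2 \cong Z$ via the associated-bundle construction is the right mechanism, and your remarks about the $\SO(3)$ versus $\SU(2)$ dichotomy and the orbifold bookkeeping are appropriate caveats.
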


\begin{thm}[{\cite[$\S$13.2.5, 13.5.10]{boyer2007sasakian}}] Let $M$ be a compact $3$-Sasakian $7$-manifold.  The foliation $\mathcal{F}_{w_0}$ is regular for some $w_0 \in S^2$ if and only if the foliations $\mathcal{F}_w$ are regular for all $w \in S^2$.  In this case, $M$ is homogeneous.
\end{thm}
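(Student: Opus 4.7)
The plan is to establish the two assertions separately. The first (regularity propagation) follows from the intrinsic $\Sp(1)$-symmetry of the 3-Sasakian structure, while the second (homogeneity) requires the classification of positive quaternion-K\"{a}hler 4-(orbi)folds.

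For the first claim, I would observe that the Reeb fields $A_1, A_2, A_3$ are complete Killing fields satisfying commutation relations of the form $[A_p, A_q] = 2\epsilon_{pqr} A_r$, and hence integrate to a locally free effective action on $M$ of a compact connected Lie group $G$ with Lie algebra $\mathfrak{sp}(1)$; thus $G \cong \SU(2)$ or $G \cong \SO(3)$. For any $g \in G$ and any unit $w \in \mathfrak{g} \cong \R^3$, the conjugation identity $g \cdot \exp(t A_w) \cdot g^{-1} = \exp(t A_{\Ad(g) w})$ shows that the isometry $\Phi_g \colon M \to M$ sends the Hopf circle of $A_w$ through $p$ bijectively onto the Hopf circle of $A_{\Ad(g) w}$ through $\Phi_g(p)$. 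Hence $\Phi_g$ maps $\mathcal{F}_w$ diffeomorphically onto $\mathcal{F}_{\Ad(g) w}$, and the two foliations are simultaneously regular or not. Because the adjoint representation $\Ad \colon G \to \SO(\mathfrak{g})$ surjects onto $\SO(3)$, which acts transitively on $S^2$, the regularity of $\mathcal{F}_{w_0}$ for one $w_0$ is equivalent to regularity of $\mathcal{F}_w$ for every $w \in S^2$.

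For the second claim, assume this universal regularity. Then every one-parameter subgroup $\{\exp(tw)\}_{t \in \R}$ of $G$ acts freely on $M$; since every element of the compact connected group $G$ lies on some such subgroup, the entire $G$-action on $M$ is free. Consequently, $X = M/\mathcal{F}_A = M/G$ is a smooth compact positive quaternion-K\"{a}hler 4-manifold (and not merely a 4-orbifold). By Hitchin's classification of compact self-dual Einstein 4-manifolds of positive scalar curvature, $X$ is isometric, up to scale, to $S^4$ or $\CP^2$. The Konishi inversion then reconstructs $M$ as either the round $S^7$ (when $G = \SU(2)$ and $X = S^4$) or the Aloff-Wallach space $N_{1,1}$ (when $G = \SO(3)$ and $X = \CP^2$, since $\CP^2$ is not spin so no $\SU(2)$-lift exists), and both are homogeneous 3-Sasakian 7-manifolds.

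The main obstacle is the appeal to Hitchin's classification, a substantial input from 4-dimensional Riemannian geometry; without it, one could not directly identify $M$ with a homogeneous model. A secondary subtlety is verifying that the 3-Sasakian structure produced by the Konishi inversion coincides, up to isometry, with the standard homogeneous structure on $S^7$ or $N_{1,1}$, which follows from the canonical nature of the Konishi construction once the Marchiafava-Romani class of $X$ is fixed.
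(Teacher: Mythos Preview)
The paper does not prove this theorem; it is quoted as a background result from Boyer--Galicki with the citation \cite[\S13.2.5, 13.5.10]{boyer2007sasakian} and no argument is supplied. So there is no ``paper's own proof'' to compare against, and your task was effectively to reconstruct the argument behind the cited result.

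Your reconstruction is sound in outline and captures the two essential ingredients: the $\Sp(1)$-symmetry that makes all the foliations $\mathcal{F}_w$ mutually isometric, and the reduction to Hitchin's classification of compact positive self-dual Einstein $4$-manifolds once the quotient $X$ is known to be smooth. The passage from ``every $S^1_w$ acts freely'' to ``$G$ acts freely'' via the fact that $G$ is covered by its maximal tori is correct, as is the identification of regularity of $\mathcal{F}_w$ with freeness of the corresponding circle action.

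There is one small omission: in your case analysis at the end you list only $S^7$ (for $G=\SU(2)$, $X=S^4$) and $N_{1,1}$ (for $G=\SO(3)$, $X=\CP^2$), but you do not treat the case $G=\SO(3)$, $X=S^4$, which yields $M=\RP^7$. This does not damage the conclusion, since $\RP^7$ is also homogeneous $3$-Sasakian, but the enumeration is incomplete as written. Your remark that the Konishi bundle over $X$ recovers $M$ up to the choice of $\SU(2)$ versus $\SO(3)$ (governed by the Marchiafava--Romani class) is correct in spirit, though in a fully detailed write-up one would want to spell out why the original $M$ must coincide with the canonical Konishi bundle rather than merely be \emph{some} principal bundle over $X$; this comes down to the fact that the connection $\alpha$ on $M \to X$ is determined by the self-dual Einstein geometry of $X$.
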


\begin{rmk}
    It is known that each $Z_w$ admits a K\"{a}hler-Einstein metric $g_{\text{KE}}$ of positive scalar curvature such that the projection $p_w \colon (M,g) \to (Z_w, g_{\text{KE}})$ is an orbifold Riemannian submersion.  However, we will instead equip $Z_w \approx Z$ with its standard \emph{strict nearly-K\"{a}hler structure}.  We will be more explicit in $\S$\ref{sec:NKStr}.
\end{rmk}
Altogether, we have a diagram: 
$$\begin{tikzcd}
M \arrow[dd, "h"'] \arrow[rd, "p_w"] &              \\
                                     & Z \arrow[ld, "\tau"] \\
X                                    &             
\end{tikzcd}$$
\indent Letting $\tau \colon Z \to X$ denote the twistor $S^2$-bundle over $X$, we let $\mathsf{V} \subset TZ$ denote the vertical subbundle, and let $\mathsf{H} \subset TZ$ denote the horizontal subbundle with respect to the nearly-K\"{a}hler metric on $Z$.  Thus, we have an orthogonal splitting $TZ = \mathsf{V} \oplus \mathsf{H}$.  A submanifold of $Z$ that is tangent to $\mathsf{H}$ is said to be \emph{horizontal}.  In terms of the decomposition $T_xM = \R A_1 \oplus \R A_2 \oplus \R A_3 \oplus \mathsf{C}$ discussed in $\S$\ref{sec:Sp(1)-str}, we have $p_w(\mathsf{C}) = \mathsf{H}$ for any $w \in S^2$. \\

\indent We now turn to examples.  The homogeneous $3$-Sasakian $7$-manifolds are classified \cite[$\S$13.4.6]{boyer2007sasakian}, and consist of
\begin{align*}
S^7 & = \frac{\Sp(2)}{\Sp(1)} & \RP^7 & = \frac{\Sp(2)}{\Sp(1) \times \Z_2} & N_{1,1} & = \frac{\SU(3)}{\text{S}(\U(1) \times \U(1))}.
\end{align*}
A wide variety of compact \emph{inhomogeneous} $3$-Sasakian $7$-manifolds have been constructed by Boyer-Galicki-Mann:

\begin{thm}[\cite{mann1994geometry}] For each $(p_1, p_2, p_3) \in \Z^3_+$ with $1 \leq p_1 \leq p_2 \leq p_3$, there exists a compact, simply-connected $3$-Sasakian $7$-manifold called $S(p_1, p_2, p_3)$.  Among them are infinitely many inhomogeneous $7$-manifolds that are not homotopy equivalent.
\end{thm}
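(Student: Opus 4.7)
The plan is to realize each $S(p_1, p_2, p_3)$ as a $3$-Sasakian reduction of the standard $3$-Sasakian sphere $S^{11}$ by a suitable weighted circle action, and then distinguish infinitely many of them by a topological invariant. View $\HH^3$ as a flat hyperk\"{a}hler manifold with the standard $\Sp(1)$-action by right multiplication generating the canonical $3$-Sasakian structure on $S^{11} \subset \HH^3$. For each $(p_1, p_2, p_3) \in \Z_+^3$, consider the $\U(1)$-action on $\HH^3$ with weights $(p_1, p_2, p_3)$,
\[
e^{i\theta} \cdot (q_1, q_2, q_3) := (e^{ip_1\theta}q_1,\ e^{ip_2\theta}q_2,\ e^{ip_3\theta}q_3),
\]
where $e^{i\theta}$ acts by left multiplication on each quaternionic coordinate. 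This action is tri-holomorphic, commutes with the right $\Sp(1)$-action, and therefore admits a hyperk\"{a}hler moment map $\mu \colon \HH^3 \to \mathrm{Im}(\HH) \cong \R^3$ whose three components are explicit quadratic forms weighted by the $p_j$.

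Set $S(p_1, p_2, p_3) := (\mu^{-1}(0) \cap S^{11})/\U(1)$. A stabilizer analysis shows that, under pairwise coprimality of the $p_j$ (which may always be arranged after rescaling), the $\U(1)$-action is free on $\mu^{-1}(0) \cap S^{11}$ and $0$ is a regular value of $\mu|_{S^{11}}$; thus $S(p_1, p_2, p_3)$ is a compact smooth $7$-manifold. The general theory of $3$-Sasakian reduction — the analogue for $3$-Sasakian geometry of the hyperk\"{a}hler quotient — then endows $S(p_1, p_2, p_3)$ with a $3$-Sasakian structure inherited from $\HH^3$, the Reeb triple descending from the residual right $\Sp(1)$-action on $\mu^{-1}(0) \cap S^{11}$.

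To prove simple-connectedness, I would use the projection $h \colon S(p_1, p_2, p_3) \to X(p_1, p_2, p_3)$ onto the quaternion-K\"{a}hler leaf-space $4$-orbifold. The orbifold $X(p_1, p_2, p_3)$ is a Galicki-Lawson quaternion-K\"{a}hler quotient of $\HP^2$ by the residual weighted $\U(1)$-action, hence simply connected; and $h$ is a principal $\Sp(1)/\Gamma$-orbibundle, so when $\Gamma = 1$ the fiber is $S^3$ and the total space is simply connected by the long exact sequence of orbibundle homotopy. Equivalently, one can argue directly on $S^{11}$: the Gysin sequence for $\U(1) \to \mu^{-1}(0) \cap S^{11} \to S(p_1, p_2, p_3)$, together with the high connectivity of $\mu^{-1}(0) \cap S^{11}$ (established by Morse theory on $S^{11}$), forces $\pi_1(S(p_1, p_2, p_3)) = 0$.

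The hardest step is producing infinitely many pairwise non-homotopy-equivalent examples. The strategy is to extract an integral cohomology invariant in closed form: the Leray spectral sequence for the $\U(1)$-fibration above, combined with a description of the equivariant topology of $\mu^{-1}(0)$, expresses $H^4(S(p_1, p_2, p_3); \Z)$ as a finite abelian group whose order is a polynomial (essentially $p_1 p_2 + p_1 p_3 + p_2 p_3$, up to conventions) in the weights. Infinitely many values of this polynomial yield infinitely many distinct homotopy types. Finally, by the classification of compact simply-connected homogeneous $3$-Sasakian $7$-manifolds — namely $S^7$ and $N_{1,1}$ — all but finitely many $S(p_1, p_2, p_3)$ are inhomogeneous. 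The main obstacle is the explicit cohomology computation: tracking the equivariant topology of the weighted moment map and extracting a usable closed form for $H^4$ (or a finer Kreck-Stolz-type invariant) is the technical heart of the argument.
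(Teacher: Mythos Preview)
The paper does not prove this statement: it is quoted verbatim as a result of Boyer--Galicki--Mann \cite{mann1994geometry} and no argument is supplied. There is therefore nothing in the paper to compare your proposal against.

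That said, your outline is essentially the Boyer--Galicki--Mann construction and is broadly correct: the manifolds $S(p_1,p_2,p_3)$ are indeed obtained as $3$-Sasakian reductions of $S^{11}$ by a weighted $\U(1)$-action, and the homotopy types are distinguished by the order of $H^4$, which is a symmetric polynomial in the weights. One small slip: you write that pairwise coprimality ``may always be arranged after rescaling,'' but this is not true --- the weights are fixed integers, and the smoothness of the quotient genuinely requires arithmetic conditions on the $p_j$ (pairwise coprimality, and $\gcd(p_1,p_2,p_3)=1$). Without these the quotient is only an orbifold. The paper's phrasing ``for each $(p_1,p_2,p_3)$'' is itself a bit loose on this point, but your proof should not paper over it. Also, the list of simply-connected homogeneous $3$-Sasakian $7$-manifolds should include $\RP^7$ only if one drops simple-connectedness; you correctly omit it, so that step is fine.
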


\indent Finally, we return to submanifolds.

\begin{prop} \label{prop:trivial-submanifolds} Let $M^7$ be a $3$-Sasakian $7$-manifold, and let $N^3 \subset M^7$ be a $3$-dimensional submanifold.  Fix $w \in S^2$ and consider the $S^1$-orbibundle $p_w \colon M \to Z$, where $Z$ carries its standard strict nearly-K\"{a}hler $\SU(3)$-structure.  Then: \\
\indent (a) $N$ is $\mathsf{J}_w$-CR if and only if $N = p_w^{-1}(S)$ for some pseudo-holomorphic curve $S \subset Z$. \\
\indent (b) $N$ is $\mathsf{J}_w$-complex Legendrian if and only if $N = p_w^{-1}(S)$ for some horizontal pseudo-holomorphic curve $S \subset Z$.
\end{prop}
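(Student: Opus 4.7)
The plan is to exploit that $p_w \colon M \to Z$ is a Riemannian (orbifold) submersion whose vertical distribution is spanned by the Reeb field $A_w$ and whose horizontal distribution is $\Ker(\alpha_w)$. By the $\Sp(1)$-equivariance of the $3$-Sasakian data, without loss of generality I may take $w = (1,0,0)$, so that $A_w = A_1$, $\mathsf{J}_w = \mathsf{J}_1$, and $\alpha_w = \alpha_1$. The crucial fact is that $\mathsf{J}_1$ preserves $\Ker(\alpha_1) = \R A_2 \oplus \R A_3 \oplus \mathsf{C}$ and descends under $dp_1$ to the standard strict nearly-K\"{a}hler almost complex structure $J$ on $Z$; moreover $\R A_2 \oplus \R A_3$ maps onto the twistor vertical $\mathsf{V}$ and $\mathsf{C}$ maps onto the twistor horizontal $\mathsf{H}$.

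For part (a), suppose $N$ is $\mathsf{J}_1$-CR. Then $A_1$ is tangent to $N$ at every point, so $N$ is saturated under the flow of $A_1$, hence $N = p_1^{-1}(S)$ for $S := p_1(N)$. Writing $T_xN = \R A_1 \oplus V_x$ with $V_x \subset \Ker(\alpha_1)$ and using $\mathsf{J}_1(A_1) = 0$, the $\mathsf{J}_1$-invariance of $T_xN$ is equivalent to $\mathsf{J}_1(V_x) = V_x$. Since $dp_1|_{V_x} \colon V_x \to T_{p_1(x)}S$ is an isomorphism intertwining $\mathsf{J}_1$ with $J$, the image $S$ is pseudo-holomorphic. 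Conversely, if $N = p_1^{-1}(S)$ for a pseudo-holomorphic $S$, then $A_1$ is tangent to $N$ by construction, and the horizontal component of $T_xN$ is the horizontal lift of $T_{p_1(x)}S$, which is $\mathsf{J}_1$-invariant by descent.

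For part (b), the $\mathsf{J}_1$-CR portion of the complex Legendrian condition already yields $N = p_1^{-1}(S)$ for a pseudo-holomorphic $S$ by part (a). I then analyze the remaining Legendrian conditions. The vanishing of $\alpha_2|_N$ and $\alpha_3|_N$, combined with $A_1 \in T_xN$, forces the horizontal complement $V_x$ to lie in $\Ker(\alpha_1,\alpha_2,\alpha_3) = \mathsf{C}$; hence $T_{p_1(x)}S = dp_1(V_x) \subset dp_1(\mathsf{C}) = \mathsf{H}$, and $S$ is horizontal. Conversely, if $S$ is horizontal and pseudo-holomorphic, then $T_xN = \R A_1 \oplus L_x$ with $L_x \subset \mathsf{C}$ a $\mathsf{J}_1$-complex line, from which $\alpha_2|_N = \alpha_3|_N = 0$ is immediate.

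Finally, I must check that the form-level conditions $\Omega_2|_N = 0$ and $\Omega_3|_N = 0$ hold automatically for such $N$. This is a short linear-algebra verification on $T_xN = \R A_1 \oplus L_x$: for $p \in \{2,3\}$ and $v \in L_x$, the identity $\mathsf{J}_p(A_1) \in \R A_2 \oplus \R A_3$ together with $L_x \perp (\R A_2 \oplus \R A_3)$ gives $\Omega_p(A_1,v) = 0$; and for $v,v' \in L_x$, the anticommutation of $\mathsf{J}_p$ with $\mathsf{J}_1$ on $\mathsf{C}$ forces $\mathsf{J}_p(L_x)$ to be the $\mathsf{J}_1$-complex line in $\mathsf{C}$ orthogonal to $L_x$, so $\Omega_p(v,v') = \langle \mathsf{J}_p v, v'\rangle = 0$. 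The main obstacle is not any individual computation but rather organizing the nested layers of structure (the $S^1$-action on $M$, the $\mathsf{A}$-decomposition, and the twistor vertical/horizontal splitting on $Z$) so that the conditions on $N \subset M$ translate cleanly to conditions on $S \subset Z$; once the setup is in place, both equivalences reduce to the elementary identities above.
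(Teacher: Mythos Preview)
The paper states this proposition without proof, treating it as background from $3$-Sasakian geometry, so there is no argument to compare against. Your proof is correct and follows the natural route: the Riemannian submersion $p_w$ has vertical field $A_w$ and horizontal distribution $\Ker(\alpha_w)$, so the $\mathsf{J}_w$-CR condition (Reeb tangency plus $\mathsf{J}_w$-invariance of the $2$-plane complement) is exactly the condition that the projected surface be pseudo-holomorphic; the additional Legendrian constraints $\alpha_2|_N=\alpha_3|_N=0$ then force that complement into $\mathsf{C}$, hence the image into $\mathsf{H}$. Your check that $\Omega_2|_N=\Omega_3|_N=0$ on $\R A_1\oplus L_x$ via the anticommutation $\mathsf{J}_1\mathsf{J}_2=-\mathsf{J}_2\mathsf{J}_1$ on $\mathsf{C}$ is clean and correct.

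One harmless caveat: with the paper's conventions (compare $\varphi_{a,b}=a^3\alpha_{123}-ab^2\sum\alpha_j\wedge\Omega_j$ from the introduction to the explicit coframe expression in \S\ref{sec:SO(4)-Coframe} and the $(1,0)$-forms $\pi_i$ in \S\ref{sec:almost-cpl-str}) one finds that $\mathsf{J}_1$ descends to $-J_{\mathrm{NK}}$ rather than $J_{\mathrm{NK}}$. This does not affect your argument, since a real $2$-plane is $J$-invariant if and only if it is $(-J)$-invariant, so the notion of pseudo-holomorphic curve is unchanged. Also, the step ``$A_1$ tangent to $N$ everywhere $\Rightarrow$ $N=p_1^{-1}(p_1(N))$'' uses that the $A_1$-orbits are closed circles (true here since $M$ is compact $3$-Sasakian) and that $N$ is embedded; this is implicit in the statement but worth noting.
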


\subsubsection{Proper Nearly-Parallel $7$-Manifolds}

\indent \indent At the other end of the hierarchy are the \textit{proper} nearly-parallel $7$-manifolds, about which far less is known.  The compact, simply-connected, homogeneous ones have been classified by Friedrich, Kath, Moroianu, and Semmelmann \cite{friedrich1997nearly}, and consist of
\begin{align*}
S^7 & = \frac{\Sp(2)\Sp(1)}{\SO(4)} & B & = \frac{\SO(5)}{\SO(3)}  & N_{k,\ell} & = \frac{\SU(3)}{S^1_{(k,\ell)}}
\end{align*}
In the same paper, the authors constructed \emph{inhomogeneous} proper nearly-parallel $7$-manifolds by ``squashing" the metric of a $3$-Sasakian $7$-manifold.  Precisely:

\begin{thm}[\cite{friedrich1997nearly}, \cite{galicki1996betti}] \label{thm:squashedmetric}  Let $M$ be a $7$-manifold equipped with a $3$-Sasakian structure $(g, (\alpha_1, \alpha_2, \alpha_3)$, $(\mathsf{J}_1, \mathsf{J}_2, \mathsf{J}_3))$.  Orthogonally split $TM = \mathsf{A} \oplus \mathsf{C}$, where $\mathsf{A} = \mathrm{span}(A_1, A_2, A_3)$ and $A_1, A_2, A_3$ are the Reeb fields.  For $t > 0$, define the metrics
$$g_t(X,Y) = \begin{cases}
t^2g(X,Y) & \mbox{if } X,Y \in \mathsf{A} \\
g(X,Y) & \mbox{if } X \in \mathsf{C} \text{ or } Y \in \mathsf{C}.
\end{cases}$$
Then: \\
\indent (a) The metric $g_t$ is Einstein if and only if $t = 1$ or $t = \frac{1}{\sqrt{5}}$. \\
\indent (b) The metric $g_{{1}/{\sqrt{5}}}$ is proper nearly-parallel.
\end{thm}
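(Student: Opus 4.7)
My plan is to exploit the Riemannian submersion structure coming from the canonical foliation $\mathcal{F}_A$, combined with the moving-frame description of $\varphi_{a,b}$ that will be developed in $\S$\ref{sec:MovingFramesQS}.

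For part (a), observe that $h \colon (M, g_t) \to X$ is a Riemannian orbifold submersion onto the positive quaternion-K\"ahler 4-orbifold $X$ whose fibers are totally geodesic spherical space forms of constant sectional curvature $1/t^2$. The family $\{g_t\}$ is precisely the \emph{canonical variation} of the $3$-Sasakian metric, for which O'Neill's submersion formulas give explicit polynomial expressions in $t^2$ for $\mathrm{Ric}(g_t)(V,V)$ on vertical $V \in \mathsf{A}$ and $\mathrm{Ric}(g_t)(X,X)$ on horizontal $X \in \mathsf{C}$. The O'Neill $A$-tensor can be read off from the 3-Sasakian structure equations (in particular from $d\alpha_p$ restricted to $\mathsf{C}$, which recovers the $\Omega_p$), the fiber Ricci is that of a round $S^3$ rescaled by $t^2$, and the base Ricci is determined by the Einstein constant of the quaternion-K\"ahler orbifold $X$ (which is itself pinned down by the 3-Sasakian normalizations). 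Imposing the Einstein condition $\mathrm{Ric}(g_t)|_\mathsf{A} = \lambda\, g_t|_\mathsf{A}$ and $\mathrm{Ric}(g_t)|_\mathsf{C} = \lambda\, g_t|_\mathsf{C}$ for a common $\lambda$ yields a single polynomial equation in $t^2$ that factors as $(t^2 - 1)(5t^2 - 1) = 0$, giving the two claimed solutions.

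For part (b), note that $g_{a,b}$ is homothetic to $g_{a/b}$, so it is enough to study the canonical $\G_2$-structure $\varphi_{a,b}$ and show that it is nearly-parallel exactly when $b^2 = 5a^2$. The plan is to expand $\varphi_{a,b}$ and $\ast \varphi_{a,b}$ in a 3-Sasakian adapted coframe, and then to apply the structure equations (such as $d\alpha_p = 2\Omega_p - \epsilon_{pqr}\,\alpha_q \wedge \alpha_r$ and the corresponding formula for $d\Omega_p$) to compute $d\varphi_{a,b}$ explicitly. The equation $d\varphi_{a,b} = \lambda \ast\! \varphi_{a,b}$ collapses to an algebraic identity in $a,b,\lambda$ whose solution set is exactly $b^2 = 5a^2$ (with $\lambda$ determined in terms of $a$). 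Setting $(a,b) = (1/\sqrt{5},1)$ recovers $g_{1/\sqrt{5}}$ up to scale. To see this nearly-parallel structure is \emph{proper}, one must exclude $\mathrm{Hol}(g_{\mathrm{C}}) \leq \SU(4)$; this follows because a Sasaki-Einstein reduction at parameter $t \neq 1$ would require a $\mathsf{J}_w$-invariant splitting of $TM$ whose associated complex volume form on the cone is closed, and a direct computation using the deformed coframe shows this closedness fails whenever $t \neq 1$. Equivalently, one can check that the dimension of the space of Killing spinors on $(M,g_{1/\sqrt{5}})$ drops from its 3-Sasakian value, precluding any finer holonomy reduction.

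The main obstacle is the bookkeeping in part (a): one must carefully track the O'Neill tensor normalizations, the scalar curvature of the quaternion-K\"ahler base (determined by the 3-Sasakian structure constants), and the coefficients in O'Neill's formulas so that the two Einstein scalars agree and the resulting polynomial actually factors as the clean expression $(t^2-1)(5t^2-1)$. By contrast, part (b) is largely algebraic once the structure equations of $\S$\ref{sec:MovingFramesQS} are in hand, since the nearly-parallel condition reduces to matching coefficients of a finite basis of $4$-forms in the adapted coframe.
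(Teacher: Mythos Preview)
The paper does not actually prove Theorem~\ref{thm:squashedmetric}: it is stated as a cited result from \cite{friedrich1997nearly} and \cite{galicki1996betti}, with no proof given. What the paper \emph{does} do, later in \S\ref{sec:SquashG2}, is verify the nearly-parallel part of (b) directly: using the structure equations~(\ref{eq:3SasakStruct}), it computes
\[
d\varphi_{a,b} = -2\,\frac{a^2+b^2}{ab^2}\,\psi_{a,b} - 2\,\frac{b^2(5a^2-b^2)}{a}\,\Gamma_1,
\qquad d\psi_{a,b} = 0,
\]
from which the condition $b^2 = 5a^2$ for $\varphi_{a,b}$ to be nearly-parallel is immediate. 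Your plan for part (b) is exactly this computation, so on that piece you and the paper agree.

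For part (a), your O'Neill/canonical-variation approach is the standard one from the original references and is correct in outline; the paper simply does not address it. Likewise the paper makes no attempt to verify that $g_{1/\sqrt{5}}$ is \emph{proper} nearly-parallel (i.e., to exclude $\mathrm{Hol}(g_{\mathrm{C}}) \leq \SU(4)$). Your sketch for this last point is the weakest part of the proposal: the claimed ``direct computation'' that closedness of a complex volume form fails for $t \neq 1$ is vague, and the Killing-spinor dimension argument, while correct in spirit, would need to be made precise (e.g., by computing the spectrum of the relevant Dirac-type operator or invoking the classification of \cite{friedrich1997nearly}). Since this is a quoted theorem anyway, that level of detail may be more than is required here.
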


\begin{rmk}
    The squashed metrics $g_t$ do not require the full $3$-Sasakian structure for their construction, but simply the underlying quaternion-Sasakian structure.
\end{rmk}

\begin{example} ${}$
\begin{enumerate}[label=(\roman*)]
\item Take $M = S^7$ with its standard $\Sp(2)$-invariant 3-Sasakian structure.  The underlying metric $g = g_1$ is the ($\SO(8)$-invariant) round metric.  The proper nearly-parallel metric $g_{{1}/{\sqrt{5}}}$ is the $\Sp(2)\Sp(1)$-invariant Jensen metric \cite{jensen1973einstein} and is induced by an $\Sp(2)\Sp(1)$-invariant nearly-parallel $\G_2$-structure $\varphi$.  The $7$-manifold $(S^7, \varphi, g_{{1}/{\sqrt{5}}})$ is often called the \emph{squashed 7-sphere}.
\item Take $M = N_{1,1}$ with its standard $\SU(3)$-invariant 3-Sasakian structure.  The proper nearly-parallel metric $g_{{1}/{\sqrt{5}}}$ is an $(\SU(3) \times \SO(3))$-invariant Einstein metric induced by an $(\SU(3) \times \SO(3))$-invariant nearly-parallel $\G_2$-structure $\varphi$.  We refer to $7$-manifold $(N_{1,1}, \varphi, g_{{1}/{\sqrt{5}}})$ as a \emph{squashed exceptional Aloff-Wallach space}.
\item Taking $M$ to be an inhomogeneous Boyer-Galicki-Mann $3$-Sasakian $7$-manifold $S(p_1, p_2,$ $p_3)$, Theorem  \ref{thm:squashedmetric}(b) yields compact, inhomogeneous, proper nearly-parallel $7$-manifolds.
\end{enumerate}
\end{example}

\indent Above, we described squashing as a deformation of the \emph{Riemannian structure}.  However, in $\S$\ref{sec:SquashG2}, we will view squashing as a deformation of the underlying \emph{$\G_2$-structure}.  A bit of care will be required in that regard, as distinct $\G_2$-structures can induce the same metric: 
see Example \ref{ex:IsometricS7} and the discussion preceding it.

\begin{rmk} We have discussed various $G$-structures on $7$-manifolds and their $8$-dimensional cones.  As a guide to these, we summarize some of the Lie groups $G$ under consideration:
$$\begin{tikzcd}
                                 & \text{G}_2 \arrow[d, no head] \arrow[ld, no head] & \text{U}(3) \arrow[ld, no head] &                                              & \text{Spin}(7) \arrow[d, no head]          & \text{U}(4) \arrow[ld, no head] \\
\text{SO}(4) \arrow[rd, no head] & \text{SU}(3) \arrow[d, no head]                   &                                 & \text{Sp}(2)\text{Sp}(1) \arrow[rd, no head] & \text{SU}(4) \arrow[d, no head]            &                                 \\
                                 & \text{U}(2) \arrow[d, no head]                    &                                 &                                              & \text{Sp}(2)\text{U}(1) \arrow[d, no head] &                                 \\
                                 & \text{Sp}(1)                                      &                                 &                                              & \text{Sp}(2)                               &                                
\end{tikzcd}$$
Notice that although $\Sp(2)\Sp(1)$ is not a subgroup of $\Spin(7)$, it is nevertheless the case that the $\Sp(2)\Sp(1)$-stabilizer of a unit vector $\mathbf{v} \in S^7 \subset \HH^2$ is contained in the $\Spin(7)$-stabilizer of $\mathbf{v}$, i.e.:
$$\SO(4) = \text{Stab}_{\Sp(2)\Sp(1)}(\mathbf{v}) \leq \text{Stab}_{\Spin(7)}(\mathbf{v}) = \G_2.$$
This ``accidental inclusion" relating $\Sp(2)\Sp(1)$ and $\Spin(7)$ plays a key role in this work: see, e.g., $\S$\ref{sec:SO4-Structures} and the discussion preceding Example \ref{ex:IsometricS7}.
\end{rmk}

\subsection{Hyperk\"{a}hler and Quaternion-K\"{a}hler Manifolds}

\indent \indent In this section, we quickly review the definitions of hyperk\"{a}hler structures, hyperk\"{a}hler metrics, quaternion-K\"{a}hler structures, and quaternion-K\"{a}hler metrics. \\

\indent We begin by recalling the Lie groups $\Sp(n)$ and $\Sp(n)\Sp(1)$. Identifying $\R^{4n}$ with $\mathbb{H}^n,$ multiplication by quaternions $i,$ $j,$ and $k$ gives rise to three complex structures $I_0,$ $J_0$ and $K_0$ on $\R^{4n}$ satisfying the quaternion relation $I_0 J_0 = K_0.$ Lowering an index using the Euclidean metric $g$ gives three symplectic forms $\omega_{I_0},$ $\omega_{J_0},$ and $\omega_{K_0}$ on $\R^{4n}$.   The Lie group $\Sp(n)$ may be defined as the simultaneous $\GL_{4n}(\R)$-stabilizer of these three 2-forms and the group $\Sp(n)\Sp(1)$ may be defined as the $\mathrm{GL}_{4n}(\R)$-stabilizer of the 4-form
$$\Theta_0 = \omega_{I_0}^2 + \omega_{J_0}^2 + \omega_{K_0}^2.$$
One can check that both $\Sp(n)$ and $\Sp(n)\Sp(1)$ fix a positive definite inner product on $\R^{4n}$, so we have $\Sp(n) < \Sp(n)\Sp(1) < \SO(4n)$.  Explicitly, for $n=2$ in an appropriate basis $(e^0, \ldots, e^7)$ of $\R^{8*}$ we have
\begin{equation*}
	\begin{aligned}
		\omega_{I_0} &= e^{01} + e^{23} - e^{45} - e^{67}, \\
		\omega_{J_0} &= e^{02} - e^{13} - e^{46} + e^{57}, \\
		\omega_{K_0} &= e^{03} + e^{12} + e^{47} + e^{56}.
	\end{aligned}
\end{equation*}
While this basis of $\R^{8\ast}$ is non-standard, it has the advantage of aligning with common conventions in $\G_2$ geometry.  \\

\indent We now specialize to real dimension $8$.  Let $Q$ be an $8$-manifold. An \emph{$\Sp(2)$-structure} on $Q$ consists of data $(g, (I_1, I_2, I_3), (\omega_1, \omega_2, \omega_3))$, where $g$ is a Riemannian metric, $(I_1,I_2,I_3)$ are $g$-orthogonal almost-complex structures satisfying the quaternionic relations $I_1I_2 = I_3$, etc., and where $\omega_p := g(I_p \cdot, \cdot)$ are the corresponding K\"{a}hler forms.  A \emph{hyperk\"{a}hler structure} is an $\Sp(2)$-structure for which each $I_p$ is integrable and each $\omega_p$ is closed.  Equivalently, it is a torsion-free $\Sp(2)$-structure.  A \emph{hyperk\"{a}hler metric} on $Q$ is a Riemanninan metric $g$ for which $\Hol(g) \leq \Sp(2)$.  Such metrics are Ricci-flat.  If $(g,(I_p), (\omega_p))$ is a hyperk\"{a}hler structure, then $g$ is a hyperk\"{a}hler metric.  Conversely, if $g$ is a hyperk\"{a}hler metric, then $g$ arises from a hyperk\"{a}hler structure. \\

\indent An \emph{$\Sp(2)\Sp(1)$-structure} on $Q$ is a pair $(g, \mathsf{W})$ consisting of a Riemannian metric $g$ and a rank 3 subbundle $\mathsf{W} \subset \End(TQ)$ such that locally $\mathsf{W}$ admits a basis $\{I, J, K\}$ consisting of $g$-orthogonal almost-complex structures with $IJ = K$.  Equivalently, it is a $4$-form $\Theta \in \Omega^4(Q)$ such that at each $x \in Q$, there is a coframe $L \colon T_xQ \to \R^8$ for which $\Theta|_x = L^*\Theta_0$.  A \emph{quaternion-K\"{a}hler structure} is an $\Sp(2)\Sp(1)$-structure for which the $4$-form $\Theta \in \Omega^4(Q)$ is $g$-parallel.  A \emph{quaternion-K\"{a}hler metric} on $Q$ is a Riemanninan metric $g$ for which $\Hol(g) \leq \Sp(2)\Sp(1)$.  Such metrics are Einstein.  If $(g, \mathsf{W})$ is a QK structure, then $g$ is a QK metric.  Conversely, if $g$ is a QK metric, then $g$ arises from a QK structure. \\

\indent Finally, we explain the relationship between the above objects.  First, note that every $\Sp(2)$-structure $(g, (I_p), (\omega_p))$ induces an $\Sp(2)\Sp(1)$-structure $(g, \mathsf{W})$ by setting $\mathsf{W} := \text{span}(I_1, I_2, I_3)$, or equivalently, setting $\Theta := \omega_1^2 + \omega_2^2 + \omega_3^2$.  Moreover, if the $\Sp(2)$-structure is hyperk\"{a}hler, then the induced $\Sp(2)\Sp(1)$-structure is quaternion-K\"{a}hler and $g$ is Ricci-flat.  Conversely, if a quaternion-K\"{a}hler structure $(g,\mathsf{W})$ is such that $g$ is Ricci-flat, then $\Hol^0(g) \leq \Sp(2)$, so $(g,\mathsf{W})$ is locally induced from a hyperk\"{a}hler structure. \\ 
\indent In particular, suppose that $Q = (\text{C}(M), g_{\mathrm{C}})$ is an $8$-dimensional cone for which $g_{\mathrm{C}}$ is a QK metric.  Since an Einstein cone metric is Ricci-flat, it follows that $g_{\mathrm{C}}$ is Ricci-flat, and hence $g_{\mathrm{C}}$ is a hyperk\"{a}hler metric.  We will exploit this observation in $\S$\ref{sec:MovingFramesQS}.

\section{Squashed $3$-Sasakian $7$-Manifolds} \label{sec:Squashed3Sas}

\indent \indent Let $M$ be a $7$-manifold with an $\SO(4)$-structure $(\varphi, \mathsf{A})$.  In $\S$\ref{sec:SO(4)-Coframe}, we explain how $\varphi$ belongs to a canonical $2$-parameter family of ``squashed" $\G_2$-structures $\varphi_{a,b}$ with $\varphi_{1,1} = \varphi$.  Then, starting in $\S$\ref{sec:MovingFramesQS}, we restrict to the situation where $(\varphi, \mathsf{A})$ is quaternion-Sasakian and study the squashings $\varphi_{a,b}$.  In that case, we show that:
\begin{enumerate}[label=(\roman*)]
    \item  $\mathsf{A}$ integrates to a foliation $\mathcal{F}_A$ whose leaves are associatives with constant positive curvature;
    \item $\varphi_{a,b}$ is co-closed; \item  $\varphi_{a,b}$ is nearly-parallel if and only if $b^2 = 5a^2$; and
    \item $\varphi_{1,1}$ is isometric to a nearly-parallel $\G_2$-structure.
\end{enumerate}

\noindent Note that both (i) and (iv) follow from the discussion in $\S$\ref{sec:Nearly-Parallel}, but we will give a unified exposition of (i)--(iv) in this section.  Moreover, we will see that $\varphi_{t,1}$ induces the squashed metrics $g_t$ of Theorem \ref{thm:squashedmetric}, in which $g_1$ is $3$-Sasakian and $g_{1/\sqrt{5}}$ is proper nearly-parallel.  For this reason, we refer to each $(M, \varphi_{a,b}, \mathsf{A})$ as a ``squashed $3$-Sasakian $7$-manifold" or ``squashed quaternion-Sasakian $7$-manifold." \\

\indent To establish the above claims, in $\S$\ref{sec:MovingFramesQS} we derive the \emph{structure equations} of a squashed $3$-Sasakian $7$-manifold. These equations encode not only the geometry of $M$, but also that of auxiliary spaces associated to $M$.  For this work, the most important such auxiliary space is the $8$-orbifold $Z \times S^2$, where $Z$ is the twistor space of the quaternion-K\"{a}hler $4$-orbifold $X := M/\mathcal{F}_A$.  In $\S$\ref{sec:HopfCircle}, we show that $Z \times S^2$ parametrizes the Hopf circles in $M$, a class of closed geodesics that we will use to construct associative $3$-folds in $M$.  We also show that $Z \times S^2$ has a natural $2$-parameter family of almost-Hermitian structures $(h_{a,b}, J, \omega_{a,b})$; these will play a role in Theorem \ref{thm:MainCorrespondence}.

\subsection{$\SO(4)$-Structures and their Squashings} \label{sec:SO(4)-Coframe}

\indent \indent Let $M$ be a $7$-manifold with an $\SO(4)$-structure $(\varphi, \mathsf{A}$).  As we now explain, $(\varphi, \mathsf{A})$ determines an $\SO(4)$-subbundle of the coframe bundle of $M$.  Indeed, by the discussion in $\S$\ref{sec:SO4-Structures}, the cone $Q := \text{C}(M) = \R^+ \times M$ admits a conical $\Sp(2)\Sp(1)$-structure $\Theta \in \Omega^4(Q)$ via
\begin{equation} \label{eq:ConicalQK}
    \Theta = r^3dr \wedge \left( 4 \, \vol_{\mathsf{A}} - \varphi \right) + r^4\! \ast\!(4 \, \vol_{\mathsf{A}} - \varphi).
    \end{equation}
Since the Lie group $\Sp(2)\Sp(1)$ acts transitively on the unit sphere $S^7 \subset \HH^2$, in an open neighborhood of any point $x \in M$, there exists a coframe $(e^1, \ldots, e^7)$ of $M$ for which
\begin{align} \label{eq:SO(4)-coframe}
    \mathsf{A} & = \text{span}(e_1, e_2, e_3) \notag \\
    \mathsf{C} & = \text{span}(e_4, e_5, e_6, e_7) \\
    \left.\Theta\right|_M & = 3e^{4567} - e^{12} \wedge (e^{45} + e^{67}) - e^{23} \wedge (e^{46} - e^{57}) - e^{31} \wedge (-e^{47} - e^{56}). \notag
\end{align}
Let $\mathcal{B} \subset FM$ denote the bundle of coframes for which (\ref{eq:SO(4)-coframe}) holds. Since the $\Sp(2)\Sp(1)$-stabilizer of a point in $S^7$ is $\SO(4)$, it follows that $\pi \colon \mathcal{B} \to M$ is an $\SO(4)$-bundle.  In terms of a local $\SO(4)$-coframe $(e^1, \ldots, e^7)$, the $\G_2$-structure $\varphi \in \Omega^3(M)$ is given by
\begin{align*}
    \varphi & = e^{123} + e^1 \wedge (e^{45} + e^{67}) + e^2 \wedge (e^{46} - e^{57}) + e^3 \wedge (-e^{47} - e^{56}).
\end{align*}
We may also reconstruct the $4$-form on $Q = \R^+ \times M$ in terms of a local $\SO(4)$-coframe $(e^1, \ldots, e^7)$ via
\begin{equation*}
    \Theta = r^3dr \wedge \left[3 \, e^{123} - e^1 \wedge (e^{45} + e^{67}) - e^2 \wedge (e^{46} - e^{57}) - e^3 \wedge (-e^{47} - e^{56}) \right] + r^4\Theta|_M
\end{equation*}
\indent In computations, we will frequently work on the total space $\mathcal{B}$ or $\mathcal{B} \times \R$ and omit pullbacks from the notation.  For example, let $(\alpha,\beta) = (\alpha_1, \alpha_2, \alpha_3, \beta_1, \beta_2, \beta_3, \beta_4) \in \Omega^1(\mathcal{B}; \R^7)$ denote the tautological $1$-form. Pulled back to $\mathcal{B} \times \R$, the Riemannian metrics on $M$ and $Q$ are, respectively,
\begin{align*}
    g & = \alpha_1^2 + \alpha_2^2 + \alpha_3^2 + \beta_1^2 + \beta_2^2 + \beta_3^2 + \beta_4^2 \\
    g_Q & = dr^2 + r^2(\alpha_1^2 + \alpha_2^2 + \alpha_3^2 + \beta_1^2 + \beta_2^2 + \beta_3^2 + \beta_4^2)
\end{align*}
Similarly, continuing to omit pullbacks from the notation, the conical $\Sp(2)\Sp(1)$-structure $\Theta$ is
$$\Theta = r^3\,dr \wedge \chi_3 + r^4\chi_4$$
where
\begin{align*}
    \chi_3 & := 3 \, \alpha_{123} - \alpha_1 \wedge (\beta_{12} + \beta_{34}) - \alpha_2 \wedge (\beta_{13} - \beta_{24}) - \alpha_3 \wedge (-\beta_{14} - \beta_{23}) \\
    \chi_4 & := 3 \,\beta_{1234} - \alpha_{23} \wedge (\beta_{12} + \beta_{34}) - \alpha_{31} \wedge (\beta_{13} - \beta_{14}) - \alpha_{12} \wedge (-\beta_{14} - \beta_{23}).
\end{align*}

\subsubsection{Squashed $\G_2$-Structures} \label{sec:SquashNew}

\indent \indent On the total space $\mathcal{B}$, the set of $\SO(4)$-invariant semibasic 3-forms is spanned by
\begin{align*}
	\Psi_1 & = \alpha_{123}, & \Psi_2 & = \alpha_1 \wedge \left(\beta_{12} + \beta_{34} \right) + \alpha_2 \wedge \left(\beta_{13} - \beta_{24} \right) + \alpha_3 \wedge \left(-\beta_{14} - \beta_{23} \right)\!,
\end{align*}
while the set of $\SO(4)$-invariant semibasic 4-forms on $\mathcal{B}$ is spanned by
\begin{align*}
	\Gamma_1 & = \beta_{1234}, & \Gamma_2 & = \alpha_{23} \wedge \left(\beta_{12} + \beta_{34} \right) + \alpha_{31} \wedge \left(\beta_{13} - \beta_{24} \right) + \alpha_{12} \wedge \left(-\beta_{14} - \beta_{23} \right)\!.
\end{align*} 
Let $a, b \in \R$ be non-zero constants and define a 3-form $\varphi_{a,b}$ on $\mathcal{B}$ by
\begin{equation} \label{eq:SquashedG2}
	\varphi_{a,b} = a^3 \Psi_1 + a b^2 \Psi_2.
\end{equation}
The $3$-form $\varphi_{a,b}$ is $\SO(4)$-invariant and descends to $M$ to define a $\G_2$-structure (which will also be denoted by $\varphi_{a,b}$). The resulting metric and 4-form will be denoted by $g_{a,b}$ and $\psi_{a,b}$ respectively. On $\mathcal{B}$,
\begin{equation*}
	\begin{aligned}
		g_{a,b} &= a^2 \left(\alpha_1^2 + \alpha_2^2 + \alpha_3^3 \right) + b^2 \left(\beta_1^2 + \beta_2^2 + \beta_3^2 + \beta_4^2 \right)\!, \\
		\psi_{a,b} &= b^4 \Gamma_1 + a^2 b^2 \Gamma_2. 
	\end{aligned}	
\end{equation*}
The $3$-forms $\varphi_{a,b}$ are the \emph{squashed $\G_2$-structures} associated to the $\SO(4)$-structure $(\varphi, \mathsf{A})$.

\subsection{Quaternion-Sasakian $7$-Manifolds and their Squashings} \label{sec:MovingFramesQS}

\indent \indent Let $M$ be a $7$-manifold equipped with a quaternion-Sasakian structure $(\varphi, \mathsf{A})$, and let $Q := \text{C}(M) = \R^+ \times M$ be the metric cone of $M$.  By definition, the conical $\Sp(2)\Sp(1)$-structure $\Theta \in \Omega^4(Q)$ defined in (\ref{eq:ConicalQK}) is quaternion-K\"{a}hler.  In this section, we equip the total space $\mathcal{B}$ of the $\SO(4)$-coframe bundle of $M$ with a global coframe and derive the corresponding structure equations --- i.e., we will express the exterior derivatives of the coframe in terms of the coframe itself. \\

\indent Let $(\alpha,\beta) = (\alpha_1, \alpha_2, \alpha_3, \beta_1, \beta_2, \beta_3, \beta_4) \in \Omega^1(\mathcal{B}; \R^7)$ denote the tautological $1$-form.  This provides a basis for the semibasic (or ``horizontal") $1$-forms on $\mathcal{B}$, and we aim to complete $(\alpha, \beta)$ to a coframe of $\mathcal{B}$.  For this, let $\mathcal{F}_M$ be the $\SO(7)$-bundle of oriented $g$-orthonormal coframes, and note that $\mathcal{B} \subset \mathcal{F}_M$.  The Levi-Civita form of $g$, denoted $\psi_M \in \Omega^1(\mathcal{F}_M; \mathfrak{so}(7))$, is a connection form for $\mathcal{F}_M \to M$.  Its restriction $\left.\psi_M\right|_{\mathcal{B}} \in \Omega^1(\mathcal{B}; \mathfrak{so}(7))$ is not a connection for $\mathcal{B} \to M$, but splits into connection terms and semibasic terms.  Indeed, noting the $\SO(4)$-irreducible decomposition
$$\mathfrak{so}(7) = \mathsf{V}_{0,2} \oplus \mathsf{V}_{1,1} \oplus \mathsf{V}_{1,3} \oplus \mathfrak{sp}(1) \oplus \mathfrak{sp}(1),$$
where $\mathsf{V}_{0,2}$, $\mathsf{V}_{1,1}$, and $\mathsf{V}_{1,3}$ are irreducible $\SO(4)$-modules of dimensions $3$, $4$, $8$, respectively, the restriction $\psi_M|_{\mathcal{B}} \in \Omega^1(\mathcal{B}; \mathfrak{so}(7))$ splits into blocks as
	\begin{equation*}
		\left.\psi_M\right|_{\mathcal{B}} = \left[\begin{array}{c|c}
			\left[ \gamma + 2 \zeta \right]_3 & \sigma + \rho \\ \hline
			- \sigma^t - \rho^t & \left[ \zeta \right]^+ + \left[ \nu \right]^-
		\end{array}\right],
	\end{equation*}
	where
	\begin{equation*}
		\begin{aligned}
			\left[\gamma + 2 \zeta \right]_3 &= \begin{bmatrix}
				0 & \gamma_3 + 2 \zeta_3 & -\gamma_2 - 2 \zeta_2 \\
				-\gamma_3 - 2 \zeta_3 & 0 & \gamma_1 +  2 \zeta_1 \\
				\gamma_2 + 2 \zeta_2 & -\gamma_1 - 2 \zeta_1 & 0
			\end{bmatrix}, \\
			\sigma + \rho &= \left[ \begin {array}{cccc} 2\,\sigma_{{6}} - \rho_{{2}} & -2 \, \sigma_{{5}} + \rho_{{1}} & 2\,\sigma_{{8}}- \rho_{{4}} & -2\,\sigma_{{7}} + \rho_{{3}} \\
			-\sigma_{{1}} - \sigma_{{7}} - \rho_{{3}} & -\sigma_{{2}} + \sigma_{{8}} + \rho_{{4}} & -\sigma_{{3}} + \sigma_{{5}} + \rho_{{1}} & -\sigma_{{4}} -\sigma_{{6}} - \rho_{{2}} \\ \sigma_{{2}} + \sigma_{{8}} + \rho_{{4}} & -\sigma_{{1}} + \sigma_{{7}} + \rho_{{3}} & \sigma_{{4}} - \sigma_{{6}} - \rho_{{2}} & -\sigma_{{3}}-\sigma_{{5}}-\rho_{{1}} \end {array} \right], \\
	\left[ \zeta \right]^+ + \left[ \nu \right]^- &=  \begin{bmatrix}
	 	0 & -\zeta_1 - \nu_1 & - \zeta_2 + \nu_2 & \zeta_3 + \nu_3 \\
	 	\zeta_1 + \nu_1 & 0 & \zeta_3 - \nu_3 & \zeta_2 + \nu_2 \\
	 	\zeta_2 - \nu_2 & - \zeta_3 + \nu_3 & 0 & \zeta_1 + \nu_1 \\
	 	-\zeta_3 - \nu_3 & \zeta_2 - \nu_2 & \zeta_1 - \nu_1 & 0
	 \end{bmatrix}.
 	\end{aligned}
	\end{equation*}
Here, $\zeta$ and $\nu$ are the components of the $\mathfrak{so}(4) = \mathfrak{sp}(1) \oplus \mathfrak{sp}(1)$-valued \emph{natural connection} on $\mathcal{B}$, whereas $\gamma \in \Omega^1(\mathcal{B}; \mathsf{V}_{0,2})$, $\rho \in \Omega^1(\mathcal{B}; \mathsf{V}_{1,1})$, and $\sigma  \in \Omega^1(\mathcal{B}; \mathsf{V}_{1,3})$ are semi-basic $1$-forms.  The components of $(\alpha, \beta, \zeta, \nu) \in \Omega^1(\mathcal{B}; \R^7 \oplus \mathfrak{so}(4))$ provide a coframe for $\mathcal{B}$. \\  
	
	Before computing the exterior derivatives $d\alpha$, $d\beta$, $d\zeta$, $d\nu$, we consider the Levi-Civita connection $\psi_Q \in \Omega^1(\mathcal{F}_Q; \mathfrak{so}(8))$ of $g_Q$, where $\mathcal{F}_Q \to Q$ is the $\SO(8)$-bundle of oriented $g_Q$-orthonormal coframes.  Restricting to the link $M \simeq \{1\} \times M \subset Q$, the $1$-form $\psi_Q$ is given by
	\begin{equation*}
		\psi_Q = \left[\begin{array}{c|c|c}
			0 & -\alpha & -\beta \\ \hline
			\alpha & \left[ \gamma + 2 \zeta \right]_3 & \sigma + \rho \\ \hline
			\beta & - \sigma^t - \rho^t & \left[ \zeta \right]^+ + \left[ \nu \right]^-
		\end{array}\right]\!.
	\end{equation*}
	Since the $4$-form $\Theta \in \Omega^4(Q)$ is parallel with respect to the Levi-Civita connection on $Q$, it follows that $\psi_Q$ must take values in the Lie algebra $\mathfrak{sp}(2) \oplus \mathfrak{sp}(1)$.  Therefore, $\gamma = \alpha,$ $\rho = \beta$ and $\sigma = 0$.  We may now compute:

\begin{subequations}\label{eq:3SasakStruct}
\begin{prop}\label{prop:structeqns} The \emph{first structure equations} are given by:
	\begin{small}
			\begin{align} 
				d \begin{bmatrix}
					\alpha_1 \\
					\alpha_2 \\
					\alpha_3
				\end{bmatrix} &= - \begin{bmatrix}
					0 & 2 \zeta_3 & -2 \zeta_2 \\
					-2 \zeta_3 & 0 & 2 \zeta_1 \\
					2 \zeta_2 & -2 \zeta_1 & 0
				\end{bmatrix} \wedge \begin{bmatrix}
					\alpha_1 \\
					\alpha_2 \\
					\alpha_3
				\end{bmatrix} + 2 \begin{bmatrix}
					\alpha_2 \wedge \alpha_3 - \beta_1 \wedge \beta_2 - \beta_3 \wedge \beta_4 \\
					\alpha_3 \wedge \alpha_1 - \beta_1 \wedge \beta_3 + \beta_2 \wedge \beta_4 \\
					\alpha_1 \wedge \alpha_2 + \beta_1 \wedge \beta_4 + \beta_2 \wedge \beta_3
				\end{bmatrix}, \\
				d \begin{bmatrix}
					\beta_1 \\
					\beta_2 \\
					\beta_3 \\
					\beta_4
				\end{bmatrix} &= - \begin{bmatrix}
					0 & -\zeta_1 - \nu_1 & - \zeta_2 + \nu_2 & \zeta_3 + \nu_3 \\
					\zeta_1 + \nu_1 & 0 & \zeta_3 - \nu_3 & \zeta_2 + \nu_2 \\
					\zeta_2 - \nu_2 & - \zeta_3 + \nu_3 & 0 & \zeta_1 + \nu_1 \\
					-\zeta_3 - \nu_3 & \zeta_2 - \nu_2 & \zeta_1 - \nu_1 & 0
				\end{bmatrix} \wedge \begin{bmatrix}
					\beta_1 \\
					\beta_2 \\
					\beta_3 \\
					\beta_4
				\end{bmatrix} + \begin{bmatrix}
					\alpha_1 \wedge \beta_2 + \alpha_2 \wedge \beta_3 - \alpha_3 \wedge \beta_4 \\
					-\alpha_1 \wedge \beta_2 - \alpha_3 \wedge \beta_3 - \alpha_2 \wedge \beta_4 \\
					\alpha_2 \wedge \beta_2 + \alpha_3 \wedge \beta_3 + \alpha_1 \wedge \beta_4 \\
					\alpha_3 \wedge \beta_2 + \alpha_2 \wedge \beta_3 - \alpha_1 \wedge \beta_4
				\end{bmatrix}.
		\end{align}
		\end{small}

\noindent Further, there exist functions $W_{ij} \colon \mathcal{B} \to \R$ with $W_{11} + W_{22} + W_{33} = 0$ such that the \emph{second structure equations} hold:
        \begin{small}
		\begin{align}
				d \begin{bmatrix}
					\zeta_1 \\
					\zeta_2 \\
					\zeta_3 
				\end{bmatrix} &= 2 \begin{bmatrix}
					\zeta_2 \wedge \zeta_3 \\
					\zeta_3 \wedge \zeta_1 \\
					\zeta_1 \wedge \zeta_2
				\end{bmatrix}, \\
				d \begin{bmatrix}
					\nu_1 \\
					\nu_2 \\
					\nu_3
				\end{bmatrix} &= 2 \begin{bmatrix}
					\nu_2 \wedge \nu_3 \\
					\nu_3 \wedge \nu_1 \\
					\nu_1 \wedge \nu_2
				\end{bmatrix} + \begin{bmatrix}
					2 + W_{11} & W_{12} & W_{13} \\
					W_{12} & 2 + W_{22} & W_{23} \\
					W_{13} & W_{23} & 2 + W_{33}
				\end{bmatrix} \begin{bmatrix}
					- \beta_1 \wedge \beta_2 + \beta_3 \wedge \beta_4 \\
					\beta_1 \wedge \beta_3 + \beta_2 \wedge \beta_4 \\
					\beta_1 \wedge \beta_4 - \beta_2 \wedge \beta_4
				\end{bmatrix}.
			\end{align}
	\end{small}
\end{prop}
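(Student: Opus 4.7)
Both sets of equations can be extracted from the torsion-free condition of the Levi-Civita connection together with the Ricci-flat (hence hyperk\"{a}hler) constraint on the cone $Q = \mathrm{C}(M)$. The quaternion-Sasakian hypothesis forces $g_Q$ to be Einstein and thus Ricci-flat, so $\mathrm{Hol}(g_Q) \leq \mathrm{Sp}(2)$, a strict reduction of the a priori $\mathfrak{sp}(2) \oplus \mathfrak{sp}(1)$-valued connection $\psi_Q$. This reduction is what eventually forces the specific form of the second structure equations.

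\textbf{First structure equations.} The tautological form $(\alpha,\beta)$ on $\mathcal{F}_M$ satisfies $d(\alpha,\beta)^t + \psi_M \wedge (\alpha,\beta)^t = 0$. Pulling back to $\mathcal{B}$ and substituting the block form of $\psi_M|_\mathcal{B}$ with the already-established identifications $\gamma = \alpha$, $\rho = \beta$, $\sigma = 0$ gives the claimed formulae by direct matrix expansion. This step is entirely algebraic.

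\textbf{Second structure equations.} I would differentiate the first structure equations and use $d^2 = 0$, which expresses $d\zeta_i$ and $d\nu_j$ modulo semibasic $2$-forms that are, by construction, the curvature pieces of the corresponding sub-connections. The hyperk\"{a}hler constraint on the curvature $\Psi_Q = d\psi_Q + \psi_Q \wedge \psi_Q$ then pins these pieces down. Under the splitting $\mathfrak{so}(4) = \mathfrak{sp}(1)_+ \oplus \mathfrak{sp}(1)_-$, the forms $\zeta$ lie in the ``extra'' $\mathfrak{sp}(1)$-summand of $\mathfrak{sp}(2) \oplus \mathfrak{sp}(1)$ that is absent once we reduce to $\mathfrak{sp}(2)$; hence this block receives no curvature, giving the pure Maurer-Cartan form $d\zeta_i = 2\,\epsilon_{ijk}\zeta_j \wedge \zeta_k$. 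The $\nu$-block instead receives the full $\mathfrak{sp}(2)$-curvature projected onto $\mathfrak{so}(4) \cap \mathfrak{sp}(2)$. By $\SO(4)$-equivariance and the second Bianchi identity, this projection must be a symmetric linear combination of the three anti-self-dual semibasic $2$-forms $-\beta_{12} + \beta_{34}$, $\beta_{13} + \beta_{24}$, $\beta_{14} - \beta_{23}$, with coefficient matrix $2\delta_{ij} + W_{ij}$ for some symmetric functions $W_{ij} \colon \mathcal{B} \to \R$; tracelessness $W_{11} + W_{22} + W_{33} = 0$ reflects that the scalar part of the $\Sp(2)\Sp(1)$-curvature has been absorbed into the universal constant $2$ (equivalently, the trace piece of the self-dual Weyl tensor of the QK base $X$ is absorbed by normalization).

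\textbf{Main obstacle.} The delicate step is fixing the universal constant $2$ and verifying that the ``generic'' piece $W_{ij}$ appears in exactly the claimed symmetric, trace-free form. A clean route is to verify the formula on the model $M = S^7 = \Sp(2)/\Sp(1)$, where the base $X = S^4 = \HP^1$ has vanishing Weyl tensor (so $W_{ij} \equiv 0$) and the structure equations can be read off directly from the Maurer-Cartan form of $\Sp(2)\Sp(1)$. Once the constant is pinned down in one model, it holds universally, because the remaining freedom is precisely the $\Sp(2)\Sp(1)$-equivariant curvature decomposition, which is the same in every quaternion-Sasakian geometry. Tracking signs between the $[\zeta]^+$ and $[\nu]^-$ block conventions throughout is a subsidiary bookkeeping task that must be handled carefully.
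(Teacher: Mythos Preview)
Your treatment of the first structure equations matches the paper exactly: both of you simply substitute the block form of $\psi_M|_{\mathcal{B}}$ (with $\gamma = \alpha$, $\rho = \beta$, $\sigma = 0$ already established) into the torsion-free equation $d(\alpha,\beta)^t = -\psi_M \wedge (\alpha,\beta)^t$.

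For the second structure equations you diverge somewhat. The paper's argument is purely computational: differentiate the first structure equations, impose $d(d\alpha) = 0$ and $d(d\beta) = 0$, and apply Cartan's Lemma. Because $(\alpha, \beta, \zeta, \nu)$ is a global coframe on $\mathcal{B}$ and the torsion terms in the first equations are so explicit, these identities alone are enough to pin down $d\zeta$ and $d\nu$ in the stated form, with the undetermined trace-free symmetric $W_{ij}$ absorbing exactly the residual freedom. No appeal to the holonomy reduction $\mathrm{Hol}(g_Q) \leq \Sp(2)$, and no verification on the $S^7$ model, is needed.

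Your route---arguing that $\zeta$ sits in the $\mathfrak{sp}(1)$-summand killed by the hyperk\"ahler reduction, hence is curvature-free, while $\nu$ carries the genuine $\mathfrak{sp}(2)$-curvature constrained by equivariance---is conceptually sound and gives a clean explanation for \emph{why} $d\zeta$ satisfies the pure Maurer--Cartan equation. But it imports more structure than the proof requires, and the ``fix the constant on $S^7$, then transport by equivariance'' step is superfluous once you commit to the direct $d^2 = 0$ computation: the constant $2$ and the trace-free condition on $W_{ij}$ fall out of that computation without reference to any model. Your approach trades a longer algebraic expansion for a shorter structural argument; the paper opts for the former.
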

\end{subequations}

\begin{proof}
By the Fundamental Lemma of Riemannian Geometry, we have
$$d\begin{bmatrix} \alpha \\ \beta \end{bmatrix} = -\psi_M \wedge \begin{bmatrix} \alpha \\ \beta \end{bmatrix}\!.$$
This implies the first structure equations.  Taking exterior derivatives of the first structure equations, noting that $d(d\alpha) = 0$ and $d(d\beta) = 0$ and applying Cartan's Lemma, yields the second structure equations.
\end{proof}

\begin{rmk}\label{rmk:so4structdisc}
	The structure equations (\ref{eq:3SasakStruct}) characterize quaternion-Sasakian structures in the following sense.  If $P$ is a 13-manifold endowed with a coframe $\left( \alpha, \beta, \zeta, \nu \right)$ and a function $W\colon P \to \mathrm{Sym}^2_0 \left( \R^3 \right)$ satisfying (\ref{eq:3SasakStruct}), then the equations $\alpha = \beta = 0$ define an integrable plane field of codimension 7 on $P.$ Let $N$ be the leaf space of the resulting foliation. It is straightforward to check that the pair $\left( \varphi, \mathsf{A} \right)$ given by
	\begin{equation*}
	    \begin{aligned}
	        \varphi &= \alpha_{123} + \alpha_1 \wedge (\beta_{12} + \beta_{34}) + \alpha_2 \wedge (\beta_{13} - \beta_{24}) + \alpha_3 \wedge (-\beta_{14} - \beta_{23})\\
	        \mathsf{A} &= \left\lbrace \beta = 0 \right\rbrace
	    \end{aligned}
	\end{equation*}
	descend to $N$ to define a quaternion-Sasakian structure. The space $P$ may be identified with an open subset of the $\SO(4)$-structure $\mathcal{B}$ associated to this quaternion-Sasakian structure.
	
\end{rmk}

The 1-form $\zeta \in \Omega^1(\mathcal{B}; \mathfrak{sp}(1))$ defines a flat connection on the subbundle $\mathsf{A} \subset TM$.  The vector bundle $\mathsf{A}$ is trivial: it is spanned by the three linearly independent non-vanishing vector fields $I_1(\partial_r)$, $I_2(\partial_r)$, $I_3(\partial_r)$.

\begin{rmk} \label{rmk:zeta-flat}
 Choosing a fixed $\zeta$-flat trivialization of $\mathsf{A}$ defines an $\Sp(1)$-subbundle $\mathcal{B}' \subset \mathcal{B}$ on which $\zeta = 0$.  It is possible to define and work with 3-Sasakian structures using the subbundle $\mathcal{B}'$ alone. However, for our purposes it is preferable to work directly with the $\SO(4)$-structure $\mathcal{B}$ because the $\G_2$-structures we define on $M$ will naturally be invariant under diffeomorphisms of $M$ preserving $\mathcal{B}.$ The group of diffeomorphisms of $M$ preserving $\mathcal{B}$ is strictly larger than the group preserving $\mathcal{B}'$ and our techniques will allow us to determine when two associative submanifolds are equivalent up to the action of the larger group, something that would not be possible if we worked only with a fixed $\zeta$-flat trivialization of $\mathsf{A}$ and the resulting $\Sp(1)$-structure $\mathcal{B}'$.
\end{rmk}

\subsubsection{The $4$-Dimensional Quotient} \label{sec:4dimquotient}

\indent \indent We now observe that the associative distribution $\mathsf{A} \subset TM$ is integrable.  Indeed, consider the distribution $\beta = 0$ on the $13$-manifold $\mathcal{B}$.  The structure equations (\ref{eq:3SasakStruct}) show both that $\beta = 0$ descends to $M$, where it coincides with $\mathsf{A}$, and furthermore that $\mathsf{A}$ is integrable.  The resulting associative foliation of $M$ is the canonical foliation $\mathcal{F}_A \subset M$ discussed in $\S$\ref{sec:3-Sasakian}.  Moreover, it follows from the $d\alpha$ and $d \zeta$ equations in (\ref{eq:3SasakStruct}) that the induced metric on any canonical leaf $F \subset M$ has constant positive curvature, which also accords with the remarks in $\S$\ref{sec:3-Sasakian}. \\

\indent Let $X = M/\mathcal{F}_A$ denote the 4-dimensional leaf space of the distribution $\mathsf{A}$ on $M$, and let $h \colon M \to X$ denote the quotient map.  With respect to the projection $h \circ \pi \colon \mathcal{B} \to X$, the $1$-forms $\beta_1, \beta_2, \beta_3, \beta_4$ provide a basis for the semi-basic forms.  It follows from the structure equations (\ref{eq:3SasakStruct}) that the symmetric 2-tensor
\begin{equation*}
	\beta_1^2 + \beta_2^2 + \beta_3^2 + \beta_4^2 
\end{equation*}
on $\mathcal{B}$ descends to $X$ to define a Riemannian metric $g_X$ on $X$.  The equations for $d\nu, d\zeta$ and $d\alpha$ in (\ref{eq:3SasakStruct}) imply that $g_X$ is self-dual Einstein with positive scalar curvature. The quaternion-Sasakian $7$-manifold $M$ may be locally identified with the $\SO(3)$-frame bundle of the rank 3 Riemannian vector bundle $\Lambda^2_+(T^*X) \to X$ of self-dual $2$-forms on $X$. \\ 

\indent The construction of the preceding paragraph can be reversed. Let $(X,g_X)$ be a Riemannian 4-orbifold, and set $\mathcal{B} := \mathcal{F}_X \times \SO(3)$, where $\mathcal{F}_X$ is the orthonormal coframe bundle of $X$.  There is an $\SO(4)$-action on $\mathcal{B}$ defined as follows.  Letting $\rho \colon \SO(4) \to \SO(3)$ denote the $\SO(4)$-representation on $\Lambda^2_+ \R^4 \simeq \R^3$, an element $k \in \SO(4)$ acts on $\mathcal{F}_X \times \SO(3)$ by the standard action on the first factor and via left-multiplication by $\rho(k)$ on the second factor. The quotient $M := \mathcal{B} / \SO(4)$ is exactly the $\SO(3)$-frame bundle of the rank 3 bundle $\Lambda^2_+(T^*X) \to X$.  \\
\indent If $g_X$ is a self-dual positive Einstein metric, then we can endow $M$ with a quaternion-Sasakian $\SO(4)$-structure in the following way.  The space $\mathcal{B}$ carries the following vector-valued 1-forms $\alpha, \beta, \zeta$, and $\nu$:
\begin{itemize}
    \item $\beta$ is the pullback of tautological 1-form on $\mathcal{F}_X$
    \item $\zeta$ is the pullback of the left-invariant Maurer-Cartan form on $\SO(3)$, and
    \item $\alpha = \mu_2 - \zeta$, $\nu = \mu_1$, where $\mu_1, \mu_2$ denote the two components of the $(\mathfrak{sp}(1) \oplus \mathfrak{sp}(1))$-valued Levi-Civita form of $g_X$.
\end{itemize}
\indent Note that $\alpha$ and $\beta$ are semibasic for the projection $\mathcal{B} \to M$.  Moreover, these forms satisfy the hypotheses and conclusion of Proposition \ref{prop:structeqns}, and therefore by the discussion in Remark \ref{rmk:so4structdisc} endow $M$ with a quaternion-Sasakian $\SO(4)$-structure.

\subsubsection{Squashed $3$-Sasakian $7$-Manifolds} \label{sec:SquashG2}

\indent \indent We continue to let $M$ be a $7$-manifold with a quaternion-Sasakian structure $(\varphi, \mathsf{A})$.  Let $\varphi_{a,b}$ denote the associated squashed $\G_2$-structures (\ref{eq:SquashedG2}) with corresponding $4$-forms $\psi_{a,b}$ and induced metrics $g_{a,b}$.  Using the structure equations (\ref{eq:3SasakStruct}), we may compute
\begin{equation*}
	\begin{aligned}
		d \, \varphi_{a,b} &= - 2 \frac{a^2 + b^2}{a b^2} \psi_{a,b} - 2 \frac{b^2 \left(5 a^2 - b^2 \right)}{a} \Gamma_1, \\
		d \, \psi_{a,b} & = 0.
	\end{aligned}
\end{equation*}
Therefore, each $\G_2$-structure $\varphi_{a,b}$ is co-closed.  Moreover, $\varphi_{a,b}$ is nearly-parallel if and only if $b^2 = 5 a^2$.

\begin{defn} Let $M$ be a $7$-manifold with a quaternion-Sasakian $\SO(4)$-structure $(\varphi, \mathsf{A})$.  We refer to any member of the $2$-parameter family $(M, \varphi_{a,b}, \mathsf{A})$ as a \emph{squashed quaternion-Sasakian $7$-manifold}, or by abuse of language, as a \emph{squashed $3$-Sasakian $7$-manifold}.
\end{defn}

\begin{rmk}
    Renaming $a = t$ and setting $b = 1$, we obtain a $1$-parameter family of $\G_2$-structures $\varphi_t := \varphi_{t,1}$ that induce the squashed metrics $g_t = g_{t,1}$ of Theorem \ref{thm:squashedmetric}.  Note that $\varphi_t$ is nearly-parallel if and only if $t = \frac{1}{\sqrt{5}}$.
\end{rmk}

\begin{figure}
    \centering
\begin{tikzpicture}

\begin{axis}[xmin=0,xmax=4, ymin=0, ymax=4, axis lines = left, xticklabels={,,}, yticklabels={,,}]

\addplot[color=red]{x}
node[right,pos=0.75]{$\ b = a$};

\addplot[color=blue]{2.23*x}
node[left,pos=0.65]{$b = \sqrt{5}a$};

\node[label={315:{$\varphi_{1,1}$}}, circle, fill, inner sep=1pt] at (axis cs:1,1) {};

\end{axis}

\end{tikzpicture}
\caption{The family of co-closed $\G_2$-structures $\varphi_{a,b}$ for $a,b > 0$.}
\label{fig-triangle}
\end{figure}
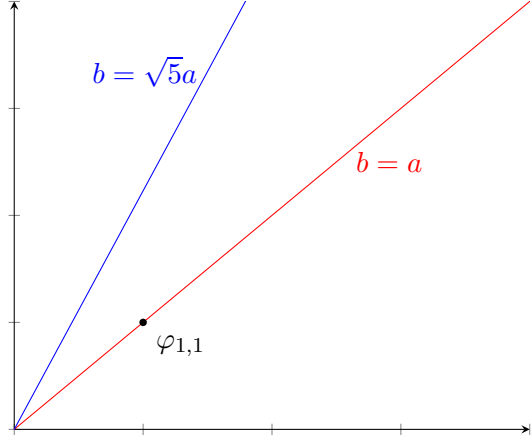

\indent Now, since the $\SO(4)$-structure $(\varphi, \mathsf{A})$ is quaternion-Sasakian, Proposition \ref{prop:Quat-Sas-G2}(a) implies that the co-closed $\G_2$-structure $\varphi = \varphi_{1,1}$ is \emph{isometric} to some nearly-parallel $\G_2$-structure $\widehat{\varphi}$.  We can now explicitly exhibit such a $\G_2$-structure.  Indeed, by choosing a trivialization of $\mathsf{A}$ as in Remark \ref{rmk:zeta-flat}, the $3$-form $\widehat{\varphi} \in \Omega^3(\mathcal{B}')$ given by
\begin{equation*}
		\widehat{\varphi} := \alpha_{123} - \alpha_1 \wedge \left(\beta_{12} + \beta_{34} \right) + \alpha_2 \wedge \left(\beta_{13} - \beta_{24} \right) + \alpha_3 \wedge \left(-\beta_{14} - \beta_{23} \right),
	\end{equation*}
which differs from $\varphi_{1,1}$ by a sign, descends to a nearly-parallel $\G_2$-structure on $M$ that induces $g = g_{1,1}$.  This subtlety is related to the fact that $\Sp(2)\Sp(1)$ is not a subgroup of $\mathrm{Spin}(7)$ (whereas $\Sp(2)$ is a subgroup of both).  The following example may help clarify the situation.

\begin{example} \label{ex:IsometricS7} 
Let $M = S^7$ carry its standard $\Sp(2)$-invariant $3$-Sasakian structure.  There is a unique $\Spin(7)$-invariant $\G_2$-structure $\widehat{\varphi} \in \Omega^3(S^7)$ that induces the round metric of constant curvature $1$.  This $\G_2$-structure is nearly-parallel and is called the \emph{standard $\G_2$-structure} on $S^7$.  Note that $\widehat{\varphi}$ does \emph{not} belong to the family of co-closed, $\Sp(2)\Sp(1)$-invariant $\G_2$-structures $\varphi_{a,b}$, but \emph{is} isometric to $\varphi_{1,1}$.
\end{example}

Let $\mu, \theta$ denote rescaled forms on $\mathcal{B}$ adapted to the $\G_2$-structure $\varphi_{a,b}$: 
\begin{equation*}
	\mu = a \alpha, \:\:\:\: \theta = b \beta.
\end{equation*}
In terms of the rescaled forms, we have
\begin{equation*}
	\begin{aligned}
		\varphi_{a,b} &= \mu_{123} + \mu_1 \wedge \left(\theta_{12} + \theta_{34} \right) + \mu_2 \wedge \left(\theta_{13} - \theta_{24} \right) + \mu_3 \wedge \left(-\theta_{14} - \theta_{23} \right), \\
		\psi_{a,b} &= \theta_{1234} + \mu_{23} \wedge \left(\theta_{12} + \theta_{34} \right) + \mu_{31} \wedge \left(\theta_{13} - \theta_{24} \right) + \mu_{12} \wedge \left(-\theta_{14} - \theta_{23} \right), \\
		g_{a,b} &= \mu_1^2 + \mu_2^2 + \mu_3^3 + \theta_1^2 + \theta_2^2 + \theta_3^2 + \theta_4^2.
	\end{aligned}
\end{equation*}
The structure equations (\ref{eq:3SasakStruct}) become
\begin{subequations}\label{eq:Scaled3SasakStruct}
\begin{small}
	\begin{align}
		d \begin{bmatrix}
			\mu_1 \\
			\mu_2 \\
			\mu_3
		\end{bmatrix} &= - \begin{bmatrix}
			0 & 2 \zeta_3 & -2 \zeta_2 \\
			-2 \zeta_3 & 0 & 2 \zeta_1 \\
			2 \zeta_2 & -2 \zeta_1 & 0
		\end{bmatrix} \wedge \begin{bmatrix}
			\mu_1 \\
			\mu_2 \\
			\mu_3
		\end{bmatrix} + \frac{2}{a} \begin{bmatrix}
			\mu_2 \wedge \mu_3  \\
			\mu_3 \wedge \mu_1  \\
			\mu_1 \wedge \mu_2
		\end{bmatrix} + \frac{2}{b^2} \begin{bmatrix}
		 - \theta_1 \wedge \theta_2 - \theta_3 \wedge \theta_4 \\
		- \theta_1 \wedge \theta_3 + \theta_2 \wedge \theta_4 \\
		\theta_1 \wedge \theta_4 + \theta_2 \wedge \theta_3
	\end{bmatrix} , \\
		d \begin{bmatrix}
			\theta_1 \\
			\theta_2 \\
			\theta_3 \\
			\theta_4
		\end{bmatrix} &= - \begin{bmatrix}
			0 & -\zeta_1 - \nu_1 & - \zeta_2 + \nu_2 & \zeta_3 + \nu_3 \\
			\zeta_1 + \nu_1 & 0 & \zeta_3 - \nu_3 & \zeta_2 + \nu_2 \\
			\zeta_2 - \nu_2 & - \zeta_3 + \nu_3 & 0 & \zeta_1 + \nu_1 \\
			-\zeta_3 - \nu_3 & \zeta_2 - \nu_2 & \zeta_1 - \nu_1 & 0
		\end{bmatrix} \wedge \begin{bmatrix}
		\theta_1 \\
		\theta_2 \\
		\theta_3 \\
		\theta_4
	\end{bmatrix} + \frac{1}{a} \begin{bmatrix}
			\mu_1 \wedge \theta_2 + \mu_2 \wedge \theta_3 - \mu_3 \wedge \theta_4 \\
			-\mu_1 \wedge \theta_2 - \mu_3 \wedge \theta_3 - \mu_2 \wedge \theta_4 \\
			\mu_2 \wedge \theta_2 + \mu_3 \wedge \theta_3 + \mu_1 \wedge \theta_4 \\
			\mu_3 \wedge \theta_2 + \mu_2 \wedge \theta_3 - \mu_1 \wedge \theta_4
		\end{bmatrix},
	\end{align}
	\end{small}
	and
    \begin{small}
	\begin{align}
		d \begin{bmatrix}
			\zeta_1 \\
			\zeta_2 \\
			\zeta_3 
		\end{bmatrix} &= 2 \begin{bmatrix}
			\zeta_2 \wedge \zeta_3 \\
			\zeta_3 \wedge \zeta_1 \\
			\zeta_1 \wedge \zeta_2
		\end{bmatrix}, \\
		d \begin{bmatrix}
			\nu_1 \\
			\nu_2 \\
			\nu_3
		\end{bmatrix} &= 2 \begin{bmatrix}
			\nu_2 \wedge \nu_3 \\
			\nu_3 \wedge \nu_1 \\
			\nu_1 \wedge \nu_2
		\end{bmatrix} + \frac{1}{b^2} \begin{bmatrix}
			2 + W_{11} & W_{12} & W_{13} \\
			W_{12} & 2 + W_{22} & W_{23} \\
			W_{13} & W_{23} & 2 + W_{33}
		\end{bmatrix} \begin{bmatrix}
			- \theta_1 \wedge \theta_2 + \theta_3 \wedge \theta_4 \\
			\theta_1 \wedge \theta_3 + \theta_2 \wedge \theta_4 \\
			\theta_1 \wedge \theta_4 - \theta_2 \wedge \theta_4
		\end{bmatrix}.
	\end{align}
\end{small}
\end{subequations}

\subsection{The Space of Hopf Circles} \label{sec:HopfCircle}

\indent \indent Let $M$ be a compact $7$-manifold.  In $\S$\ref{sec:3-Sasakian}, we equipped $M$ with a $3$-Sasakian structure $(\langle \cdot, \cdot\rangle$, $(\alpha_1, \alpha_2, \alpha_3)$, $(\mathsf{J}_1, \mathsf{J}_2, \mathsf{J}_3))$ and defined a ``Hopf circle" to be a leaf of the quasi-regular $1$-dimensional foliation $\mathcal{F}_w \subset \mathcal{F}_A \subset M$, for any $w \in S^2$.  We remarked that each projection $p_w \colon M \to M/\mathcal{F}_w$ is a principal $S^1$-orbibundle, and each leaf space $M/\mathcal{F}_w$ may be identified with the (orbifold) twistor space $Z$ of $X$. \\
\indent In this section, we equip $M$ with less data, namely that of a squashed quaternion-Sasakian structure $(\varphi_{a,b}, \mathsf{A})$.  In $\S$\ref{sec:HCNew}, we give two alternate characterizations of Hopf circles that are better suited to $\SO(4)$-geometry.  We also show that the space of Hopf circles may be identified with the $8$-orbifold $Z \times S^2$.  Then, in $\S$\ref{sec:almost-cpl-str}, we endow $Z \times S^2$ with a natural almost-Hermitian structure $(h_{a,b}, J, \omega_{a,b})$, and in $\S$\ref{sec:NKStr} we relate $(h_{1,1}, J, \omega_{1,1})$ to the strict nearly-K\"{a}hler structure on $Z$.

\subsubsection{Hopf Circles} \label{sec:HCNew}

\indent \indent Let $M$ be a compact $7$-manifold equipped with a squashed quaternion-Sasakian structure $(\varphi_{a,b}, \mathsf{A})$.  Recall that the distribution $\mathsf{A}$ is integrable, yielding a canonical foliation $\mathcal{F}_A$ whose leaves, called \emph{canonical leaves}, are totally-geodesic associative $3$-folds $F \subset M$ of constant positive sectional curvature.

\begin{defn}
    A \emph{Hopf circle} in $(M, \varphi_{a,b}, \mathsf{A})$ is a closed geodesic that lies in a canonical leaf.
\end{defn}

\begin{prop} \label{prop:HopfCircleSpace}
	Let $M$ be a compact squashed quaternion-Sasakian 7-manifold. The space of Hopf circles in $M$ is diffeomorphic to $Z \times S^2$, where $Z$ is the twistor space of the 4-orbifold $X = M /\mathcal{F}_A.$
\end{prop}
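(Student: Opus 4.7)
The plan is to construct an explicit diffeomorphism $\Psi\colon \mathcal{H} \to Z \times S^2$, where $\mathcal{H}$ denotes the space of oriented Hopf circles. The strategy exploits the principal $G$-orbibundle structure $h\colon M \to X$ for $G \in \{\SO(3), \SU(2)\}$ arising from the discussion in $\S$\ref{sec:4dimquotient}. The key geometric input is that each canonical leaf is a $G$-orbit on which the squashed metric $g_{a,b}$ restricts to a bi-invariant metric of constant positive curvature, so its closed geodesics are precisely the orbits of one-parameter subgroups $\{e^{tw}\} \leq G$, parametrized by unit vectors $w$ in the Lie algebra $\mathfrak{g}$.

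First, I would use the fundamental vector fields of the $G$-action to identify the space of \emph{pointed} oriented Hopf circles with $M \times S^2$, where $S^2 := \{w \in \mathfrak{g} \colon |w| = 1\}$. Forgetting the base point corresponds to the equivalence relation $(x, w) \sim (x \cdot e^{tw}, w)$ for $t \in \mathbb{R}$, yielding $\mathcal{H} \cong (M \times S^2)/{\sim}$ as a set.

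Next, recalling the realization of the twistor space as the associated bundle $Z = M \times_G S^2$ under the action $g \cdot (x, w) = (x \cdot g^{-1}, \Ad_g w)$, I would define
\[
\Psi\colon \mathcal{H} \longrightarrow Z \times S^2, \qquad [x, w] \longmapsto \bigl([x, w]_G,\, w\bigr).
\]
Both components descend to the quotient: the first because $(x \cdot e^{tw}, w) = e^{-tw} \cdot (x, w)$ shows every $\sim$-orbit is contained in a single $G$-orbit, and the second because $\sim$ fixes the Lie algebra coordinate. Injectivity reduces to the rank-one identity that the $\Ad$-stabilizer of any nonzero $w \in \mathfrak{g}$ equals the one-parameter subgroup $\{e^{tw}\}$, which forces the $G$-equivalence restricted to fixed second coordinate to coincide precisely with $\sim$; surjectivity follows from the transitivity of $\Ad$ on $S^2 = \mathrm{Sph}(\mathfrak{g})$.

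Finally, both $\mathcal{H}$ and $Z \times S^2$ are smooth $8$-dimensional orbifolds, and $\Psi$ descends from smooth quotient maps. Smoothness of $\Psi^{-1}$ can be verified via the global coframe on $\mathcal{B}$ from Proposition \ref{prop:structeqns}. I anticipate the principal obstacle to be orbifold bookkeeping at points of nontrivial $G$-isotropy, where one must verify that $\Psi$ is an orbifold diffeomorphism rather than a mere homeomorphism; this reduces to comparing local uniformizers on the two quotients and should follow from the compatibility between the $G$-isotropy group at $x \in M$ and the one-parameter subgroup $\{e^{tw}\}$ encoded by the corresponding Hopf circle.
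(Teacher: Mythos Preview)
Your proposal is correct and follows essentially the same approach as the paper. Both arguments parametrize pointed Hopf circles by $M \times S^2$ via a global trivialization of $\mathsf{A}$ and then quotient by the same equivalence relation $(x,w) \sim (x \cdot e^{tw}, w)$; the only difference is in the final identification. The paper works in a fixed $\zeta$-flat trivialization of $\mathsf{A}$ and simply observes that for each fixed $w \in S^2$ the slice $M \times \{w\}$ collapses to $Z_w \cong Z$ by Theorem~\ref{thm:twistspacequot}, yielding $(M \times S^2)/{\sim} = Z \times S^2$ directly. You instead invoke the associated-bundle description $Z = M \times_G S^2$ and check bijectivity of $\Psi$ via the rank-one fact that $\mathrm{Stab}_{\Ad}(w) = \{e^{tw}\}$. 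Your route is a touch more explicit and avoids appealing separately to the $w$-by-$w$ identification $Z_w \cong Z$, while the paper's version is shorter; the content is the same.
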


\begin{proof} A Hopf circle is a closed geodesic in $M$ lying entirely in a fixed canonical leaf.  Given a point $m \in M$ and a unit vector $w \in \mathsf{A}_m$, there is a unique Hopf circle passing through $m$ with tangent vector $w$. Working in a fixed $\zeta$-flat trivialization of $\mathsf{A}$, this geodesic is a fiber of the projection $p_w\colon M \to Z_w$ of Theorem \ref{thm:twistspacequot}.  Now, define an equivalence relation $\sim$ on $M \times S^2$ by declaring that $(m_1, w_1) \sim (m_2, w_2)$ if and only if $w_1 = w_2$ in this trivialization and $m_1$ and $m_2$ lie in the same fiber of $p_{w_1}\colon M \to Z_{w_1}$.  Then $(m_1, w_1)$ and $(m_2, w_2)$ define the same Hopf circle if and only if $(m_1, w_1) \sim (m_2, w_2)$.  Consequently, the space of Hopf circles is diffeomorphic to $(M \times S^2)/\!\sim\ = Z \times S^2$. 
\end{proof}

\indent We now study Hopf circles in $M$ in terms of $\SO(4)$-frames.  For this, we continue with the setup of the previous section.  That is, we let $\pi \colon \mathcal{B} \to M$ be the $\SO(4)$-coframe bundle, let $(\mu, \theta) \in \Omega^1(\mathcal{B}; \R^3 \oplus \R^4)$ denote the squashed tautological $1$-form, and let $(\zeta, \nu) \in \Omega^1(\mathcal{B}; \mathfrak{sp}(1) \oplus \mathfrak{sp}(1))$ denote the natural connection.  Thus, $(\mu,\theta,\zeta,\nu)$ is a global coframe of the $13$-manifold $\mathcal{B}$.  We now consider the following differential ideal $\mathcal{I}$ on $\mathcal{B}$ consisting of $8$ linearly independent $1$-forms
$$\mathcal{I} := \langle \theta_1, \theta_2, \theta_3, \theta_4, \mu_2, \mu_3, \zeta_2, \zeta_3 \rangle.$$
\indent By the structure equations (\ref{eq:3SasakStruct}), the ideal $\mathcal{I}$ is Frobenius.  Consequently, $\mathcal{B}$ is foliated by the $5$-dimensional integral submanifolds of $\mathcal{I}$.  Moreover, any such maximal integral $5$-fold in $\mathcal{B}$ projects via $\pi \colon \mathcal{B} \to M$ to a closed geodesic in $M$ that lies in a canonical leaf $F \subset M$ of the associative distribution $\mathsf{A}$. \\
\indent Conversely, if $\gamma\colon \left[a,b\right] \to M$ is a closed geodesic in $M$ lying entirely in a single canonical leaf $F,$ then define the subset $\mathcal{Q}(\gamma) \subset \gamma^* \mathcal{B}$ consisting of $\SO(4)$-coframes $T_mM \to \R^7$ sending the unit tangent vector of $\gamma$ to $e_1$, a $\U(2)$-subbundle of $\gamma^* \mathcal{B}$. We have $\theta_1 = \ldots = \theta_4 = 0$ and $\mu_2 = \mu_3 = 0$ on this subset by virtue of the frame adaptation, and the geodesic condition implies $\zeta_2 = \zeta_3 = 0$ on $\mathcal{Q}(\gamma)$.  Therefore, $\mathcal{Q}(\gamma)$ is a maximal 5-dimensional integral submanifold of $\mathcal{I}$.  In summary, we have proved:

\begin{prop} \label{prop:IdealDefHC} Let $M$ be a compact squashed quaternion-Sasakian 7-manifold.  A closed curve in $M$ is a Hopf circle if and only if it is the image under $\pi \colon \mathcal{B} \to M$ of a maximal integral $5$-fold of $\mathcal{I}$.
\end{prop}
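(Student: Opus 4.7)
The plan is to prove both directions by identifying a maximal integral 5-fold of $\mathcal{I}$ with an ``adapted frame bundle'' over a Hopf circle, exploiting the structure equations (\ref{eq:3SasakStruct}). The first step is to verify that $\mathcal{I}$ is Frobenius. Each of $d\theta_1, \ldots, d\theta_4$ is expressed in (\ref{eq:3SasakStruct}) as a combination of $\theta_j \wedge (\text{connection})$ plus terms of the form $\mu_j \wedge \theta_k$ with $j\in\{1,2,3\}$, all of which contain a generator of $\mathcal{I}$. From $d\alpha_i$ (or the rescaled $d\mu_i$), one reads off that $d\mu_2$ and $d\mu_3$ each land in $\mathcal{I}$ modulo the ideal, since the only non-ideal terms would come from $\mu_2 \wedge \mu_3$ or $\alpha$-connection acting on $\mu_1$, both of which either contain $\mu_2,\mu_3$ or wedge the connection with an ideal generator. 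Finally, $d\zeta_2 = 2\zeta_3 \wedge \zeta_1$ and $d\zeta_3 = 2\zeta_1 \wedge \zeta_2$ are manifestly in $\mathcal{I}$. Hence $\mathcal{I}$ is Frobenius and $\mathcal{B}$ is foliated by its maximal $(13-8)=5$-dimensional integral submanifolds.

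For the forward direction, given a Hopf circle $\gamma \subset M$, I would construct the subset $\mathcal{Q}(\gamma) \subset \gamma^*\mathcal{B}$ of those $\SO(4)$-coframes $T_mM \to \R^7$ that send the unit tangent vector of $\gamma$ to $e_1$. The residual frame freedom is the stabilizer of $e_1 \in \R^3 \subset \R^7$ in $\SO(4)$, which is the $\U(2)$ factor acting on the remaining directions, so $\mathcal{Q}(\gamma)$ is a rank-$4$ subbundle over $\gamma$ and has dimension $1+4=5$. Because $\gamma$ is contained in a canonical leaf (where $\theta = 0$) and its tangent points along $e_1$, the adaptation forces $\theta_1=\cdots=\theta_4=0$ and $\mu_2=\mu_3=0$ on $\mathcal{Q}(\gamma)$. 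The geodesic condition for $\gamma$ inside its canonical leaf then translates, via the $d\mu_1$ structure equation restricted to $\mathcal{Q}(\gamma)$, into $\zeta_2=\zeta_3=0$. Thus $\mathcal{Q}(\gamma)$ is a maximal integral $5$-fold of $\mathcal{I}$ that projects onto $\gamma$.

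For the reverse direction, let $N \subset \mathcal{B}$ be a maximal integral $5$-fold of $\mathcal{I}$. Of the seven semibasic $1$-forms $(\mu_1,\mu_2,\mu_3,\theta_1,\ldots,\theta_4)$, six belong to $\mathcal{I}$, so $\pi(N)$ is at most $1$-dimensional; a dimension count using the $\U(2)$-dimensional fiber of $\pi|_N$ shows $\pi(N)$ is exactly a $1$-dimensional submanifold with tangent direction represented by the restriction of $\mu_1$. Since $\theta|_N = 0$, the tangent to $\pi(N)$ lies in $\mathsf{A}$, so $\pi(N)$ is contained in a canonical leaf $F$. Combining $\mu_2|_N = \mu_3|_N = 0$ with $\zeta_2|_N = \zeta_3|_N = 0$ and the $d\mu_j$ equations of (\ref{eq:3SasakStruct}) gives the covariant derivative of the tangent vector field vanishing along $\pi(N)$, i.e.\ the geodesic equation inside $F$. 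Since $M$ is compact and the canonical leaves are the orbits of the quasi-regular $\Sp(1)$-action, every geodesic tangent to $\mathsf{A}$ closes up to a Hopf circle.

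The main technical obstacle is the Frobenius check for $d\mu_2, d\mu_3$: here the connection form $\zeta$ couples $\mu_1$ to $\mu_2, \mu_3$, so one must be careful that the pieces not in $\mathcal{I}$ cancel. This follows because those pieces involve $\zeta_2 \wedge \mu_1$ and $\zeta_3 \wedge \mu_1$, and $\zeta_2,\zeta_3$ are generators of $\mathcal{I}$. Once this is handled, identifying the integral leaves with the adapted frame bundles $\mathcal{Q}(\gamma)$ of Proposition \ref{prop:HopfCircleSpace} completes the correspondence.
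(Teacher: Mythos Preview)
Your proposal is correct and follows essentially the same route as the paper: verify that $\mathcal{I}$ is Frobenius from the structure equations, build the adapted $\U(2)$-frame bundle $\mathcal{Q}(\gamma)$ over a Hopf circle and check that all eight generators vanish there, and conversely read off from a maximal integral $5$-fold that its $\pi$-image is a geodesic in a canonical leaf. One small imprecision: the geodesic condition $\zeta_2=\zeta_3=0$ is not extracted from the $d\mu_1$ equation (which, once $\mu_2=\mu_3=\theta=0$, reduces to $d\mu_1=0$ and carries no information about $\zeta_2,\zeta_3$); rather, it comes from the Levi-Civita form $\psi_M|_{\mathcal{B}}$, whose upper-left block is $[\alpha+2\zeta]_3$, so the components of $\nabla e_1$ in the $e_2,e_3$ directions are $\alpha_3+2\zeta_3$ and $\alpha_2+2\zeta_2$, which vanish along the adapted bundle exactly when $\zeta_2=\zeta_3=0$.
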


\indent On the $13$-manifold $\mathcal{B}$, the vector fields dual to $\mu_1, \zeta_1, \nu_1, \nu_2, \nu_3$ generate a locally free ${\mathrm{U}(1) \times \mathrm{U}(2)}$-action, thereby yielding a principal $(\U(1) \times \U(2))$-orbibundle $\lambda \colon \mathcal{B} \to \mathcal{B}/(\U(1) \times \U(2))$.  The maximal 5-dimensional integral submanifolds of the ideal $\mathcal{I}$ are easily seen to be the orbits of ${\mathrm{U}(1) \times \mathrm{U}(2)}$ on $\mathcal{B}$.  Consequently, Propositions \ref{prop:HopfCircleSpace} and \ref{prop:IdealDefHC} imply that $\mathcal{B}/(\U(1) \times \U(2))$ can be identified with the space of Hopf circles, $Z \times S^2$.

\indent In summary, $\mathcal{B}$ admits the structure of a double fibration via the principal $\SO(4)$-bundle $\pi \colon \mathcal{B} \to M$ and the principal $(\U(1) \times \U(2))$-orbibundle $\lambda \colon \mathcal{B} \to Z \times S^2$.  Moreover, the eight $1$-forms $\beta_1, \beta_2, \beta_3, \beta_4$ and $\alpha_2, \alpha_3, \zeta_2, \zeta_3$ are $\lambda$-semibasic, whereas $\alpha_1, \zeta_1, \nu_1, \nu_2, \nu_3$ are connection $1$-forms for $\lambda$.  Altogether, we have a diagram
$$\begin{tikzcd}
& \mathcal{B} \arrow[ld, "\pi"'] \arrow[rd, "\lambda"] & \\
M \arrow[rd, "h"'] & & Z \times S^2 \arrow[ld, "q"] \\
                   & X  &                             
\end{tikzcd}$$
where $h \colon M \to X$ is the canonical fibration, and $q \colon Z \times S^2 \to X$ is the obvious $(S^2 \times S^2)$-bundle.

\subsubsection{Almost-Complex Structure on $Z \times S^2$} \label{sec:almost-cpl-str}

\indent \indent We now describe how the space $Z \times S^2= \mathcal{B}/(\U(1) \times \U(2))$ may be equipped with a naturally defined 2-parameter family of almost-Hermitian structures $(h_{a,b}, J, \omega_{a,b})$. Define $\C$-valued 1-forms $\pi_i$ on $\mathcal{B}$ by
\begin{align*}
\pi_1 & = \mu_2 + i\mu_3, & \pi_2 & = \sqrt{2}\left(\theta_1 + i\theta_2\right), & \pi_3 & = \sqrt{2}\left(\theta_3 + i\theta_4\right), & \pi_4 & = \zeta_2 + i\zeta_3.
\end{align*}
The space of $(1,0)$-forms for the almost-complex structure $J$ consists of precisely the $\C$-valued 1-forms on $Z \times S^2$ whose pullbacks to $\mathcal{B}$ lie in $\mathrm{span}_{\C}(\pi_1, \pi_2, \pi_3, \pi_4).$ This property characterizes $J.$ Note that $J$ does not depend on $a$ or $b.$

The Riemannian metric $h_{a,b}$ on $Z \times S^2$ is defined by declaring $(\pi_1, \pi_2, \pi_3, \pi_4)$ to be a $\U(4)$-frame, i.e.:
\begin{align*}
h_{a,b} & = \pi_1 \circ \overline{\pi}_1 + \pi_2 \circ \overline{\pi}_2 + \pi_3 \circ \overline{\pi}_3 + \pi_4 \circ \overline{\pi}_4 \\
& = \mu_2^2 + \mu_3^2 + \zeta_2^2 + \zeta_3^2 + 2(\theta_1^2 + \theta_2^2 + \theta_3^2 + \theta_4^2).
\end{align*}
The non-degenerate $2$-form $\omega_{a,b}$ making $(h_{a,b}, J, \omega_{a,b})$ an almost-Hermitian triple is then given by
\begin{align*}
\omega & = \textstyle \frac{i}{2}\left( \pi_1 \wedge \overline{\pi}_1 + \pi_2 \wedge \overline{\pi}_2 + \pi_3 \wedge \overline{\pi}_3 + \pi_4 \wedge \overline{\pi}_4 \right) \\
& = \mu_2 \wedge \mu_3 + \zeta_2 \wedge \zeta_3 + 2(\theta_1 \wedge \theta_2 + \theta_3 \wedge \theta_4).
\end{align*}
The structure equations (\ref{eq:3SasakStruct}) imply
\begin{align*}
        d \pi_1 & \equiv - \tfrac{a}{b^2} \, \overline{\pi_2 \wedge \pi_3} & d \pi_2 & \equiv - \tfrac{1}{a}\, \overline{\pi_3 \wedge \pi_1} - \overline{\pi_3 \wedge \pi_4}, \\
        d \pi_4 &\equiv 0 & d \pi_3 &\equiv - \tfrac{1}{a}\, \overline{\pi_1 \wedge \pi_2} + \overline{\pi_2 \wedge \pi_4},
\end{align*}
modulo $(\pi_1, \pi_2, \pi_3, \pi_4).$ Therefore, the almost-complex structure $J$ is not integrable.

\subsubsection{Strict Nearly-K\"{a}hler Structure on $Z \times \{w\}$} \label{sec:NKStr}

\indent \indent For every $w \in S^2,$ the 6-fold $Z_w = Z \times \{ w \}$ is an almost-complex sub-orbifold of $Z \times S^2.$ The almost-Hermitian structure $(h_{1,1}, J, \omega_{1,1})$ on $Z \times S^2$ restricts to $Z_w$ to define a strict nearly-K\"ahler structure: setting $a = b = 1$ and restricting to $Z_w,$ we have $\pi_4 = \overline{\pi_4} = 0$ and
\begin{equation*}
    \begin{aligned}
    d \omega &= -3 \, \mathrm{Im} \Upsilon,
    \\
    d \, \mathrm{Re} \Upsilon &= -2 \omega^2,
    \end{aligned}
\end{equation*}
where $\Upsilon = \pi_1 \wedge \pi_2 \wedge \pi_3$.  The nearly K\"ahler structure on $Z_w$ is independent of $w \in S^2$ and is equivalent to the standard nearly K\"ahler structure on $Z$ viewed as the twistor space of the positive self-dual Einstein 4-orbifold $X$.

\section{Associatives in Squashed $3$-Sasakian $7$-Manifolds} \label{sec:AssocSquash3-Sas}

\indent \indent We now begin our study of associative $3$-folds in squashed $3$-Sasakian $7$-manifolds $(M, \varphi_{a,b}, \mathsf{A})$.  In Proposition \ref{prop:stripedHopf}, we characterize the striped associatives in $M$ --- recall from Definition  \ref{def:striped} that ``striped" is a condition on the Gauss map --- as those that are locally ``Hopf-ruled."  Precisely:

\begin{defn} \label{def:HopfRuled} A $3$-dimensional submanifold $N \subset M$, not necessarily associative, is \emph{Hopf-ruled} if there exists a smooth fibration $N \to S$ to a surface $S$ whose fibers are Hopf circles in $M$.
\end{defn}

\indent In $\S$\ref{sec:HopfRuled}, we set up a formalism for Hopf ruled 3-folds (not necessarily associative) in $M$ that essentially identifies them with surfaces in $Z \times S^2$, the space of Hopf circles.  We then ask which surfaces in $Z \times S^2$ correspond to the \emph{associative} Hopf-ruled $3$-folds in $M$.  In $\S$\ref{sec:Correspondence}, we answer this question by establishing Theorem \ref{thm:MainCorrespondence}: the $J$-holomorphic curves in $(Z \times S^2, h_{a,b}, J, \omega_{a,b})$ correspond to Hopf-ruled associative $3$-folds in $(M, \varphi_{a,b})$.

\subsection{Adapted Frames for Associatives}

\indent \indent Let $(M, \varphi_{a,b}, \mathsf{A})$ be a squashed $3$-Sasakian $7$-manifold.  In this section, we study associative $3$-folds in $M$ with respect to the $\SO(4)$-structure $(\varphi_{a,b}, \mathsf{A})$ by the method of moving frames.  We begin by adapting $\SO(4)$-frames to the associative. \\
\indent Let $\mathcal{B} \to M$ be the bundle of $\SO(4)$-coframes as in $\S$\ref{sec:SO(4)-Coframe}, and let $u \colon N \to M$ be an associative $3$-fold.  At each point $p \in N$, the tangent space $T_pN$ is an element of the associative Grassmannian $\Gr_{\text{ass}}(T_pM, \varphi_{a,b}) \cong \G_2/\SO(4)$.  By Proposition \ref{prop:assocorbits}, there exists a coframe $\left(\mu_1, \mu_2, \mu_3, \theta_1, \theta_2, \theta_3, \theta_4 \right) \in \mathcal{B}_p$ and numbers $r \in [0,\pi/2]$ and $s \in [0,\pi/6]$ such that $T_p N$ is defined by the equations
 \begin{equation}\label{eq:frameadaptzeros}
	\begin{aligned}
		-\sin (2s) \mu_1 &+ \cos (2s) \theta_3 = 0, \\
		-\sin(s+r) \mu_2 &+ \cos(s+r) \theta_2 = 0, \\
		-\sin(s-r) \mu_3 &+ \cos(s-r) \theta_1 = 0, \\
		\theta_4 &= 0.
	\end{aligned}
 \end{equation}
As $p \in N$ varies, we obtain a pair of functions $r \colon N \to [0,\pi/2]$ and $s \colon N \to [0,\pi/6]$ together with a subset $\mathcal{B}_1$ of the pulled back bundle $u^*(\mathcal{B})$ consisting of all coframes satisfying (\ref{eq:frameadaptzeros}).

If we assume that the tangent spaces of the associative have uniform $\mathrm{SO}(4)$-stabilizer $\G$, then $\mathcal{B}_1$ defines a principal $\G$-bundle over $N$ and we have a diagram
 $$\begin{tikzcd}
	{\mathrm{G}} & {\mathrm{SO}(4)} & {\mathrm{SO}(4)} \\
	& {\mathcal{B}_1} & {u^*(\mathcal{B})} & {\mathcal{B}} \\
	& N & N & M.
	\arrow[from=1-2, to=2-3]
	\arrow[from=1-3, to=2-4]
	\arrow[from=2-3, to=2-4]
	\arrow[from=2-3, to=3-3]
	\arrow[from=2-4, to=3-4]
	\arrow[from=3-3, to=3-4]
	\arrow[from=1-1, to=2-2]
	\arrow[hook, from=2-2, to=2-3]
	\arrow[no head, from=3-2, to=3-3]
	\arrow[from=2-2, to=3-2]
\end{tikzcd}$$
For example, if $u \colon N \to M$ is striped and $r \in (0, \pi/2)$, then the tangent spaces all have stabilizer $\mathrm{G} \cong \text{O}(2) \leq \SO(4)$ for a suitable embedding of $\text{O}(2) \hookrightarrow \SO(4)$, and $\mathcal{B}_1 \to N$ is a principal $\text{O}(2)$-bundle.

\subsection{Striped Associative $3$-folds}

\indent \indent Let $u \colon N \to M$ be a \emph{striped} associative submanifold with adapted coframe bundle $\mathcal{B}_1 \to N$. By Definition \ref{def:striped}, we have $s = 0$ identically on $\mathcal{B}_1$, so that
\begin{align*}
    -\sin(r) \mu_2 + \cos(r) \theta_2 & = 0, & \theta_3 & = 0, \\
	\sin(r) \mu_3 + \cos(r) \theta_1 & = 0, & \theta_4 &= 0.
\end{align*}
Let us write $\cos(r) = x_1$ and $\sin(r) = x_2,$ so that $x_1^2 + x_2^2 = 1,$ and define 1-forms $\epsilon_1, \epsilon_2, \epsilon_3$ on $\mathcal{B}_1$ by
\begin{equation*}
	\begin{aligned}
		\epsilon_1 &= \mu_1 \\
		\epsilon_2 &= x_1 \mu_2 - x_2 \theta_2, \\
		\epsilon_3 &= x_1 \mu_3 + x_2 \theta_1.
	\end{aligned}
\end{equation*}
The $1$-forms $\epsilon_1, \epsilon_2, \epsilon_3$ are semi-basic for the projection $\mathcal{B}_1 \to N$.   The metric and volume form on $N$ induced from $M$ pull back to $\mathcal{B}_1$ to be
\begin{equation*}
	g_N = \epsilon_1^2 + \epsilon_2^2 + \epsilon_3^2, \:\:\:\: \mathrm{vol}_N = \epsilon_{123}.
\end{equation*}
The equations (\ref{eq:frameadaptzeros}) together with the structure equations (\ref{eq:Scaled3SasakStruct}) imply
\begin{subequations}\label{eq:TypeAstruct}
\begin{equation}\label{eq:TypeAstructa}
	\begin{aligned}
		d \epsilon_1 &= \left(-4 f_1 x_1^2 + 2 \frac{a}{b^2} x_1^2 - 4 \frac{1}{a} x_1^2 - 2 \frac{a}{b^2} \right) \epsilon_2 \wedge \epsilon_3, & & \\
		d \epsilon_2 &= - \left(\psi - \frac{1}{a} \epsilon_1 \right) \wedge \epsilon_3 - 2 f_2 x_1^2 \epsilon_2 \wedge \epsilon_3 + \frac{x_1}{x_2} y \, \epsilon_1 \wedge \epsilon_2, & & \\
		d \epsilon_3 &= \left(\psi - \frac{1}{a} \epsilon_1 \right) \wedge \epsilon_2 + 2 f_3 x_1^2 \epsilon_3 \wedge \epsilon_2 + \frac{x_1}{x_2} y \, \epsilon_1 \wedge \epsilon_3, & & \\
		d x_1 &= x_2 y \, \epsilon_1 - 2 x_1 x_2^2 f_3 \, \epsilon_2 + 2 x_1 x_2^2 f_2 \, \epsilon_3, & & \\
		d x_2 &= -x_1 y \, \epsilon_1 + 2 x_1^2 x_2 f_3 \, \epsilon_2 - 2 x_1^2 x_2 f_2 \, \epsilon_3,
	\end{aligned}
\end{equation}
and
\begin{equation}\label{eq:TypeAstructb}
    \begin{aligned}
        \zeta_1 &= \tfrac{1}{2} \psi + f_1 \epsilon_1 + f_2 \epsilon_2 + f_3 \epsilon_3, & \nu_1 &= -\tfrac{3}{2} \psi -f_1 \epsilon_1 - f_2 \epsilon_2 - f_3 \epsilon_3. \\
		\zeta_2 &= - \frac{x_1}{2 a} \left(2 a f_1 + 3 \right) \epsilon_2 + \frac{y}{2 x_2} \epsilon_3, & \nu_2 &= p_2 \epsilon_2 + p_3 \epsilon_3,  \\
		\zeta_3 &= -\frac{y}{2 x_2} \epsilon_2 - \frac{x_1}{2 a} \left(2 a f_1 + 3 \right) \epsilon_3 , & \nu_2 &= -p_3 \epsilon_2 + p_2 \epsilon_3,
    \end{aligned}
\end{equation}
\end{subequations}
where $y,$ $f_1, f_2, f_3,$ $p_2, p_3$ are functions defined on $\mathcal{B}_1$ and $\psi$ is a connection 1-form for the $\mathrm{S}^1$-structure $\mathcal{B}_1$ on $N.$

\begin{prop} \label{prop:stripedHopf} Let $u \colon N \to (M, \varphi_{a,b})$ be an associative $3$-fold.  Then:
$$N \text{ is striped or locally a subset of a canonical leaf } \iff N \text{ is locally ruled by Hopf circles.}$$
\end{prop}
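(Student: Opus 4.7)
The proof splits naturally into the two directions, with the forward direction further subdivided by the canonical-leaf alternative.

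For the easy direction ($\Leftarrow$), the plan is to observe that every Hopf circle lies in a canonical leaf and is therefore tangent to $\mathsf{A}$. If $N$ is locally Hopf-ruled near $x \in N$, the tangent to the Hopf circle through $x$ lies in $T_xN \cap \mathsf{A}_x$, forcing this intersection to be at least one-dimensional. By Proposition \ref{prop:AssocIntersect} this gives $s(x) = 0$, with either $\dim(T_xN \cap \mathsf{A}_x) = 1$ or $T_xN = \mathsf{A}_x$. Since $\mathsf{A}$ is integrable to $\mathcal{F}_A$, the locus $\{x : T_xN = \mathsf{A}_x\}$ is open in $N$ (where it holds, $N$ coincides with a canonical leaf near $x$), and its complement $\{r > 0\}$ is open by continuity of $r$. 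Hence on each connected component $N$ is locally either striped or a subset of a canonical leaf.

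The forward direction ($\Rightarrow$) is handled case by case. The canonical-leaf case is immediate: a canonical leaf $F$ is a $3$-Sasakian space form, and any nonvanishing section of $\mathsf{A}|_F$ of unit length is a Reeb field whose orbits foliate $F$ by Hopf circles. For the striped case, I would work on the adapted coframe bundle $\mathcal{B}_1 \to N$ from (\ref{eq:frameadaptzeros}) with $s \equiv 0$ and $r$ nowhere vanishing, and exploit the structure equations (\ref{eq:TypeAstruct}). The key steps are:
\begin{enumerate}
    \item Define on $N$ the rank-one distribution $D := \ker\epsilon_2 \cap \ker\epsilon_3$, and check, by inspecting the tangent vectors cut out by (\ref{eq:frameadaptzeros}), that $D$ coincides with $TN \cap \mathsf{A}|_N$.
    \item Observe that (\ref{eq:TypeAstructa}) gives $d\epsilon_2 \equiv d\epsilon_3 \equiv 0 \pmod{\epsilon_2, \epsilon_3}$, so by Frobenius $D$ integrates to a smooth one-dimensional foliation of $N$.
    \item Identify the leaves of $D$ with Hopf circles via Proposition \ref{prop:IdealDefHC}: on $\mathcal{B}_1 \cap \{\epsilon_2 = \epsilon_3 = 0\}$, the frame adaptation makes $\theta_1, \theta_2, \theta_3, \theta_4, \mu_2, \mu_3$ all vanish, while (\ref{eq:TypeAstructb}) shows that $\zeta_2, \zeta_3$ also vanish. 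Hence this subset of $\mathcal{B}_1$ sits inside a maximal integral $5$-fold of the ideal $\mathcal{I} = \langle \theta_1, \theta_2, \theta_3, \theta_4, \mu_2, \mu_3, \zeta_2, \zeta_3\rangle$, so its image under $\pi\colon \mathcal{B}\to M$ is locally a Hopf circle.
    \item Conclude that the local leaf space $S$ of $D$ is a surface and the projection $N \to S$ is a smooth fibration whose fibers are Hopf circles, exhibiting $N$ as Hopf-ruled.
\end{enumerate}

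The main obstacle is Step 3. Producing an integrable rank-one distribution tangent to $\mathsf{A}$ along $N$ is cheap --- all it uses is the first-order structure equations (\ref{eq:TypeAstructa}). What makes this distribution Hopf, rather than merely tangent to $\mathsf{A}$, is the fact that its integral curves are \emph{geodesics} in $M$. This geodesic information is encoded precisely in the expressions (\ref{eq:TypeAstructb}) for the connection forms $\zeta_2, \zeta_3$ as multiples of $\epsilon_2$ and $\epsilon_3$; without this second-order input the ideal $\mathcal{I}$ of Proposition \ref{prop:IdealDefHC} could not be matched to $\mathcal{B}_1 \cap \{\epsilon_2 = \epsilon_3 = 0\}$. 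Consequently, the heart of the argument is to verify that the defining structure equations of a striped associative $3$-fold force exactly the vanishing of $\zeta_2$ and $\zeta_3$ along the putative ruling direction.
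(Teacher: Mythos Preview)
Your proposal is correct and takes essentially the same approach as the paper: for $(\Leftarrow)$ both invoke Proposition~\ref{prop:AssocIntersect}, and for the striped case of $(\Rightarrow)$ your Steps~2--3 (Frobenius integrability of $\langle\epsilon_2,\epsilon_3\rangle$ on $\mathcal{B}_1$, followed by the observation that the frame adaptation together with (\ref{eq:TypeAstructb}) forces all of $\mu_2,\mu_3,\theta_1,\ldots,\theta_4,\zeta_2,\zeta_3$ to vanish along the integral manifolds) are exactly what the paper uses. Your exposition is more detailed---in particular your emphasis on why (\ref{eq:TypeAstructb}) is the crucial second-order ingredient that upgrades ``curve tangent to $\mathsf{A}$'' to ``Hopf circle''---but the logical skeleton is identical.
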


\begin{proof} $(\Longleftarrow)$ Suppose $N$ is locally ruled by Hopf circles.  Then for each $p \in N$, we have $\dim(T_pN \cap \mathsf{A}_p) \geq 1$.  The result now follows from Proposition \ref{prop:AssocIntersect} and Definition \ref{def:striped}. \\ 
\indent $(\Longrightarrow)$ If $N$ is a subset of a canonical leaf, then the result is clear.  So, suppose that $u \colon N \to M$ is striped. Equations (\ref{eq:TypeAstruct}) show that the ideal $\langle \epsilon_2, \epsilon_3 \rangle$ defined on the $4$-manifold $\mathcal{B}_1$ is Frobenius.  They also imply that the integral surfaces of $\langle \epsilon_2, \epsilon_3 \rangle$ on $\mathcal{B}_1$ are integral surfaces of the ideal $\left\langle \mu_2, \mu_3, \theta_1, \theta_2, \theta_3, \theta_4, \zeta_2, \zeta_3 \right\rangle$ defined on $u^*(\mathcal{B})$.  Therefore, these integral surfaces project to Hopf circles that lie in $N$.
\end{proof}
\begin{rmk}
We emphasize that the definition of ``striped associative" makes sense in any $7$-manifold with an $\SO(4)$-structure $(\varphi, \mathsf{A})$.  On the other hand, the concept of ``Hopf-ruled $3$-fold" is specific to \emph{quaternion-Sasakian} $\SO(4)$-structures and their squashings. It would be interesting to know if striped associatives in $7$-manifolds with other types of $\SO(4)$-structures admit similar geometric characterizations. 
\end{rmk}

\subsection{Hopf-Ruled $3$-folds} \label{sec:HopfRuled}

\indent \indent Let $(M, \varphi_{a,b}, \mathsf{A})$ be a compact squashed $3$-Sasakian $7$-manifold.  Recall that Proposition \ref{prop:HopfCircleSpace} gives a diffeomorphism
\begin{align*}
    \Gamma \colon Z \times S^2 & \to \frac{\mathcal{B}}{\U(1) \times \U(2)} \cong \{\text{Hopf circles in }M\} \\
    \Gamma(z,w) & = \pi(\lambda^{-1}(z,w)),
\end{align*}
where we are viewing $\mathcal{B}$ as a double fibration
$$\begin{tikzcd}
& \mathcal{B} \arrow[ld, "\pi"'] \arrow[rd, "\lambda"] &                                              \\
M & & Z \times S^2 
\end{tikzcd}$$
Heuristically, this suggests correspondences between the following classes of objects:
\begin{enumerate}
    \item A Hopf-ruled $3$-fold in $M$ --- i.e., an immersed $3$-fold $u \colon N \to M$ admitting a fibration to a surface whose fibers are Hopf circles.
    \item An immersed $3$-fold in $M$ of the form $\Gamma(\Sigma) := \pi(\lambda^{-1}(\Sigma)) \subset M$ for some surface $\Sigma \subset Z \times S^2$.
    \item A $2$-parameter family of Hopf circles --- i.e., an immersed surface $v \colon \Sigma \to Z \times S^2$.
    \item A ``directrix" surface $f \colon \Sigma \to Z$ together with a ``ruling" map $w \colon \Sigma \to S^2$ for which $\text{rank}(f,w) = 2$.
\end{enumerate}
Guided by this intuitive picture, in this section we explain how to recast the geometry of a Hopf-ruled $3$-fold $u \colon N \to M$ in terms of an associated surface $v \colon \Sigma \to Z \times S^2$, and vice versa.  The formalism that we establish here will be used to prove the Correspondence Theorem \ref{thm:MainCorrespondence}. 

\subsubsection{From Surfaces in $Z \times S^2$ to Hopf-Ruled $3$-folds in $M$} \label{sec:Surface2Ruled}

\indent \indent Let $v \colon \Sigma \to Z \times S^2$ be an immersed surface.  Recall that $Z \times S^2$ carries the Riemannian metric $h_{a,b}$ defined in $\S$\ref{sec:almost-cpl-str}.
Equip $\Sigma$ with the metric $h_\Sigma$ induced from $v \colon \Sigma \to (Z \times S^2, h_{a,b})$, let $\mathcal{F}_\Sigma \to \Sigma$ be the corresponding oriented orthonormal frame bundle, and let $\eta = \eta_1 + i\eta_2 \in \Omega^1(\mathcal{F}_\Sigma; \C)$ be the complex tautological form.  Thus, the induced metric and volume form on $\Sigma$ are
\begin{align*}
    h_\Sigma & = \eta \circ \overline{\eta} = \eta_1^2 + \eta_2^2 & \text{vol}_\Sigma & = \textstyle \frac{i}{2}\eta \wedge \overline{\eta} = \eta_1 \wedge \eta_2.    
\end{align*}
Next, noting that $v^*(\mathcal{B}) \to \Sigma$ is a $(\U(1) \times \U(2))$-bundle, we may define the $(T^2 \times \U(2))$-bundle
$$\mathcal{G}_\Sigma := \mathcal{F}_\Sigma \times_\Sigma v^*(\mathcal{B}).$$
We have a diagram
$$\begin{tikzcd}
\mathcal{G}_\Sigma \arrow[r, two heads] \arrow[d] & v^*(\mathcal{B}) \arrow[d] \arrow[r, hook] & \mathcal{B} \arrow[d, "\lambda"] \arrow[rd, "\pi"] &              \\
\Sigma \arrow[r, no head]                         & \Sigma \arrow[r, "v"]                        & Z \times S^2                               & M
\end{tikzcd}$$
Now, note that the image of $v^*(\mathcal{B}) \hookrightarrow \mathcal{B}$ is simply $\lambda^{-1}(\Sigma)$.  Therefore, the image of the natural map $\mathcal{G}_\Sigma \to M$ is equal to $(\pi \circ \lambda^{-1})(\Sigma)$.  Moreover, the image of $\mathcal{G}_\Sigma \to M$ is a Hopf-ruled $3$-dimensional immersed submanifold on the set
$$\mathcal{G}_\Sigma^{\circ} := \{ p \in \mathcal{G}_\Sigma \colon \text{rank}(\mu_1, \mu_2, \mu_3, \theta_1, \theta_2, \theta_3, \theta_4)|_p = 3 \},$$
i.e., the set of points on which the linear map $(\mu_1, \mu_2, \mu_3, \theta_1, \theta_2, \theta_3, \theta_4) \colon T_p\mathcal{G}_\Sigma \to \R^7$ has rank $3$.

\subsubsection{From Hopf-Ruled $3$-folds in $M$ to Surfaces in $Z \times S^2$} \label{sec:Ruled2Surface}

\indent \indent Let $u \colon N \to M$ be a Hopf-ruled immersed $3$-fold in $M$, and let $V$ be the unit vector field on $N$ tangent to the ruling.  Consider $u^*(\mathcal{B}) \to N$, which is a $\SO(4)$-bundle.  Adapt frames to the ruling by considering
$$\mathcal{B}^{1}_N|_p  := \{ (e_1, \ldots, e_7) \in u^*(\mathcal{B})|_p \colon e_1 = V_p \}.$$
Note that $\mathcal{B}^{1}_N \to N$ is a $\U(2)$-bundle. \\
 \indent Next, equip $N$ with the Riemannian metric $g_N$ induced from $u \colon N \to M$, let $\mathcal{F}_N \to N$ be the $\SO(3)$-coframe bundle, and let $\rho = (\rho_1, \rho_2, \rho_3) \in \Omega^1(\mathcal{F}_N; \R^3)$ be the tautological $1$-form.
Again, we can adapt frames to the ruling by considering the $\SO(2)$-bundle
$$\mathcal{F}^{1}_N := \{ f \in \mathcal{F}_N \colon f_*(V) = (1,0,0) \}.$$
Finally, define the $(\SO(2) \times \U(2))$-bundle
$$\mathcal{Q}_N := \mathcal{F}^1_N \times_N \mathcal{B}^1_N.$$
Altogether, we have a diagram
$$\begin{tikzcd}
\mathcal{Q}_N \arrow[r, two heads] \arrow[d] & \mathcal{B}^1_N \arrow[r, hook] \arrow[d] & u^*(\mathcal{B}) \arrow[d] \arrow[r, hook] & \mathcal{B} \arrow[d, "\pi"] \arrow[rd, "\lambda"] &                                   \\
N \arrow[r, no head]              & N \arrow[r, no head]                & N \arrow[r, "u"]        & M      & Z \times S^2
\end{tikzcd}$$
One can check that the image of the natural map $\mathcal{Q}_N \to \mathcal{B}$ is equal to $\lambda^{-1}(\Sigma)$ for some immersed surface $\Sigma \to Z \times S^2$, and hence $N = (\pi \circ \lambda^{-1})(\Sigma)$. \\

\indent The above formalism established, we now explore the geometry of a Hopf-ruled $3$-fold $u \colon N \to M$.  For this, recall the $\C$-valued $1$-forms $\pi_1, \pi_2, \pi_3, \pi_4 \in \Omega^1(\mathcal{B}; \C)$ discussed in $\S$\ref{sec:almost-cpl-str}.  In terms of these, the $\G_2$-structure and metric on $M^7$ are:
\begin{equation}\label{eq:piexpressphig}
\begin{aligned}
\varphi_{a,b} & = \textstyle \mu_1 \wedge \frac{i}{2} (\pi_1 \wedge \overline{\pi}_1 + \frac{1}{2}\pi_2 \wedge \overline{\pi}_2 + \frac{1}{2}\pi_3 \wedge \overline{\pi}_3) + \frac{1}{2}\,\text{Re}(\pi_1 \wedge \pi_2 \wedge \pi_3) \\
g_{a,b} & \textstyle = \mu_1^2 + \pi_1 \circ \overline{\pi}_1 + \frac{1}{2}\pi_2 \circ \overline{\pi}_2 + \frac{1}{2}\pi_3 \circ \overline{\pi}_3.
\end{aligned}
\end{equation}
Pulling all quantities back to $\mathcal{Q}_N$, we have:

\begin{lem} \label{lem:geometry-Hopf-ruled} Let $\sigma = \rho_2 + i\rho_3$.  There exist $\C$-valued functions $A_1, \ldots, A_4, B_1, \ldots, B_4 \colon \mathcal{Q}_N \to \C$ for which
\begin{align*}
\pi_1 & = A_1\sigma + B_1\overline{\sigma} &  \pi_2 & = A_2\sigma + B_2\overline{\sigma} \\
\pi_4 & = A_4\sigma + B_4\overline{\sigma} & \pi_3  & = A_3\sigma + B_3\overline{\sigma}.
\end{align*}
Write $A = (A_1, \frac{1}{\sqrt{2}}A_2, \frac{1}{\sqrt{2}}A_3)$ and $B = (B_1, \frac{1}{\sqrt{2}}B_2, \frac{1}{\sqrt{2}}B_3)$, viewed as vectors in $\C^3$, and let $\langle A, B \rangle := A_1\overline{B}_1 + \frac{1}{2}A_2\overline{B}_2 + \frac{1}{2}A_3\overline{B}_3$ denote their inner product.  Then on $\mathcal{Q}_N$,
\begin{align*}
\left.\varphi_{a,b}\right|_N & = \frac{i}{2}\left( \Vert A \Vert^2 - \Vert B \Vert^2 \right) \mu_1 \wedge \sigma \wedge \overline{\sigma} \\
g_{a,b \, N} & = \mu_1^2 + \left( \Vert A \Vert^2 + \Vert B \Vert^2 \right)\sigma \circ \overline{\sigma} + 2\,\mathrm{Re}\!\left[ \langle A,B \rangle\, \sigma^2 \right]  \\
\vol_{a,b \, N} & = \frac{i}{2}\left[ \left( \Vert A \Vert^2 + \Vert B \Vert^2 \right)^2 - 4 \left| \langle A, B \rangle \right|^2 \right]^{1/2} \mu_1 \wedge \sigma \wedge \overline{\sigma}.
\end{align*}
\end{lem}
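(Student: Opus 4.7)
The main work is to show that, when pulled back to $\mathcal{Q}_N$, each of $\pi_1, \pi_2, \pi_3, \pi_4$ lies in the $\C$-span of $\sigma$ and $\overline{\sigma}$; once this is established, the three displayed identities follow by direct substitution into the expressions (\ref{eq:piexpressphig}).

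First I would establish this spanning claim. For $\pi_1, \pi_2, \pi_3$, it is a consequence of the frame adaptation: since $\mathcal{B}^1_N \subset \mathcal{B}$ imposes $e_1 = V \in \mathsf{A}$, the tautological $1$-forms $\mu_2, \mu_3, \theta_1, \ldots, \theta_4$ all vanish on the Hopf direction $V$ and are semibasic for $\mathcal{Q}_N \to N$. As $V$ is dual to $\rho_1$, these forms must lie in the $\R$-span of $\rho_2, \rho_3$, so each $\pi_i$ for $i = 1, 2, 3$ admits an expression $A_i\sigma + B_i\overline{\sigma}$. For $\pi_4 = \zeta_2 + i\zeta_3$, the forms $\zeta_2, \zeta_3$ are $\lambda$-semibasic on $\mathcal{B}$ and hence descend to $1$-forms on $Z \times S^2$. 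By the construction in \S\ref{sec:Ruled2Surface}, the composite $\mathcal{Q}_N \to \mathcal{B} \to Z \times S^2$ factors through the $2$-dimensional surface $\Sigma$, so $\pi_4$ becomes semibasic for $\mathcal{Q}_N \to \Sigma$. Since the Hopf direction $V$ is vertical for $N \to \Sigma$, the form $\rho_1$ fails to be semibasic over $\Sigma$, and hence $\pi_4$ also lies in $\mathrm{span}_{\C}(\sigma, \overline{\sigma})$.

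With the decompositions $\pi_i = A_i\sigma + B_i\overline{\sigma}$ in hand, I would compute $\pi_j \wedge \overline{\pi}_j = (|A_j|^2 - |B_j|^2)\,\sigma \wedge \overline{\sigma}$ and observe that $\pi_1 \wedge \pi_2 \wedge \pi_3 = 0$ because any three $1$-forms lying in a $\C$-$2$-dimensional space have vanishing triple wedge. Substituting into the first expression of (\ref{eq:piexpressphig}) gives the formula for $\varphi_{a,b}|_N$. For the metric, I would use $\pi_j \circ \overline{\pi}_j = (|A_j|^2 + |B_j|^2)\,\sigma \circ \overline{\sigma} + 2\,\mathrm{Re}(A_j\overline{B}_j\,\sigma^2)$; the weights of $\tfrac{1}{2}$ on $\pi_2, \pi_3$ in (\ref{eq:piexpressphig}) produce exactly the weighted sums $\|A\|^2, \|B\|^2, \langle A, B\rangle$ appearing in the lemma. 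The volume form then follows by expanding $\sigma^2 = \rho_2^2 - \rho_3^2 + 2i\rho_2\rho_3$, computing the $2\times 2$ Gram determinant of the $\rho_2, \rho_3$ block of $g_{a,b\,N}$ to obtain $(\|A\|^2 + \|B\|^2)^2 - 4|\langle A, B\rangle|^2$, and converting back via $\sigma \wedge \overline{\sigma} = -2i\,\rho_2 \wedge \rho_3$.

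The only step that is not purely mechanical is the semibasic argument for $\pi_4$, which depends crucially on the fact that the natural map $\mathcal{Q}_N \to Z \times S^2$ factors through the $2$-dimensional $\Sigma$; everything else reduces to bookkeeping in complex exterior and symmetric algebra.
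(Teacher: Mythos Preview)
Your proof is correct, and for $\pi_1,\pi_2,\pi_3$ and the computational second half it coincides with the paper's argument. The one genuine difference is your treatment of $\pi_4$. The paper handles all four $\pi_j$ at once by invoking Proposition~\ref{prop:IdealDefHC}: on $\mathcal{Q}_N$ the Hopf circles are simultaneously the integral curves of $\langle \rho_2,\rho_3\rangle$ and of the ideal $\langle \mu_2,\mu_3,\theta_1,\ldots,\theta_4,\zeta_2,\zeta_3\rangle$, so every generator of the latter---including $\zeta_2,\zeta_3$---lies in the span of $\sigma,\overline{\sigma}$. Your argument instead uses that $\zeta_2,\zeta_3$ are $\lambda$-semibasic and that the composite $\mathcal{Q}_N \to Z\times S^2$ has image $\Sigma$; this is valid, but to conclude $\pi_4 \in \mathrm{span}(\sigma,\overline{\sigma})$ you are implicitly using that the map $\mathcal{Q}_N \to \Sigma$ factors through $N$ (so that its vertical bundle is exactly the common kernel of $\rho_2,\rho_3$), which deserves a sentence of justification. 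The paper's route is shorter and treats all $\pi_j$ uniformly; yours trades the appeal to Proposition~\ref{prop:IdealDefHC} for an appeal to the double-fibration picture of \S\ref{sec:HopfCircle}, which is a reasonable alternative.
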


\begin{proof} 
By the definitions of $\mathcal{B}^1_N$ and $\mathcal{F}^1_N$ we have $\mu_1 = \rho_1$ on $\mathcal{Q}_N.$ On the one hand, the Hopf circles in $N$ are the projections to $N$ of the integral submanifolds of the ideal $\langle \rho_2, \rho_3 \rangle = \langle \sigma, \overline{\sigma} \rangle.$ On the other hand, Hopf circles are the projections to $N$ of the integral submanifolds of the ideal $\left\langle \mu_2, \mu_3, \theta_1, \theta_2, \theta_3, \theta_4, \zeta_2, \zeta_3 \right\rangle.$ Therefore, on $\mathcal{Q}_N,$ we must have
\begin{align*}
\mu_2, \mu_3, \zeta_2, \zeta_3, \theta_1, \theta_2, \theta_3, \theta_4 & \equiv 0\,\text{ mod }(\sigma, \overline{\sigma}).
\end{align*}
This yields the existence of $A_1, A_2, A_3, A_4, B_1, B_2, B_3, B_4$.  The remaining formulas follow by using (\ref{eq:piexpressphig}) to express $\varphi_{a,b}|_N$ and $g_{a,b \, N}$ in terms of these functions.
\end{proof}

\subsection{Hopf-Ruled Associatives: Correspondence Theorem} \label{sec:Correspondence}

\indent \indent We are now in a position to prove the Correspondence Theorem \ref{thm:MainCorrespondence}. Recall that $Z \times S^2$ carries the almost-Hermitian structure $(h_{a,b}, J, \omega_{a,b})$ defined in $\S$\ref{sec:almost-cpl-str}.  An immersed surface $v \colon \Sigma \to Z \times S^2$ is called \emph{$J$-holomorphic} if each tangent plane $v_*(T_p\Sigma) \subset T_{v(p)}(Z \times S^2)$ is $J$-invariant.  In practice, we will use the following alternate characterizations of $J$-invariant $2$-planes.

\begin{lem}[Characterizations of complex lines] \label{lem:holo-char} Let $(V, (h,J, \omega))$ be a Hermitian vector space, and let $E \subset V$ be an oriented real $2$-dimensional subspace.  The induced metric and orientation on $E$ endow it with a complex structure (that is not necessarily the restriction of $J$). \\
\indent (a) $E \subset V$ is $J$-invariant $\iff$ Every $\beta \in \Lambda^{2,0}(V^*)$ satisfies $\beta|_E = 0$. \\
\indent (b) $E \subset V$ is $J$-invariant and positively oriented $\iff$ Every $\alpha \in \Lambda^{1,0}(V^*)$ satisfies $\alpha|_E \in \Lambda^{1,0}(E^*)$.
\end{lem}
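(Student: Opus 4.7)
The plan is to reduce both statements to linear algebra on the type decomposition $\Lambda^k V_\C^* = \bigoplus_{p+q = k} \Lambda^{p,q}(V^*)$ induced by $J$, using the $\R$-linear isomorphism
\[
    V \xrightarrow{\sim} V^{1,0}, \qquad v \mapsto v^{1,0} := \tfrac{1}{2}(v - iJv),
\]
which becomes $\C$-linear once $V$ is given the complex structure $J$ (one checks $(Jv)^{1,0} = i \cdot v^{1,0}$).

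For (a), I would fix a basis $\xi_1, \xi_2$ of $E$ and compute the type decomposition of $\xi_1 \wedge \xi_2 \in \Lambda^2 V \subset \Lambda^2 V_\C$. The key observation is that its $(2,0)$-component is precisely $\xi_1^{1,0} \wedge \xi_2^{1,0}$. Since any $\beta \in \Lambda^{2,0}(V^*)$ pairs trivially with the $(1,1)$- and $(0,2)$-pieces, the hypothesis ``$\beta|_E = 0$ for all $\beta \in \Lambda^{2,0}(V^*)$'' is equivalent to $\xi_1^{1,0} \wedge \xi_2^{1,0} = 0$ in $\Lambda^2 V^{1,0}$. The latter means $\xi_1^{1,0}$ and $\xi_2^{1,0}$ are $\C$-linearly dependent in $V^{1,0}$; transporting this across $(V, J) \cong V^{1,0}$ yields $\xi_2 \in \operatorname{span}_\R(\xi_1, J\xi_1)$, so $E = \operatorname{span}_\R(\xi_1, J\xi_1)$ is $J$-invariant. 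The converse is immediate: a $J$-invariant real $2$-plane is a complex line, on which every $(2,0)$-form vanishes for dimensional reasons.

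For (b), I would translate ``$\alpha|_E \in \Lambda^{1,0}(E^*)$'' into the identity $\alpha(J_E v) = i\,\alpha(v)$ for every $v \in E$, where $J_E$ is the metric-and-orientation-induced complex structure on $E$. Combining this with the defining property $\alpha(Jv) = i\,\alpha(v)$ of $\alpha \in \Lambda^{1,0}(V^*)$ gives $\alpha\bigl((J_E - J)v\bigr) = 0$ for every $v \in E$ and every such $\alpha$. Since $(1,0)$-forms separate points of $V^{1,0}$ and since a real vector $w$ satisfies $w^{1,0} = 0$ iff $w = 0$, this forces $J_E v = Jv$ on all of $E$. Consequently $JE \subseteq E$ and $J|_E = J_E$; by uniqueness of the metric-compatible complex structure on an oriented real $2$-plane, the given orientation on $E$ is the complex orientation induced by $J$, i.e., $E$ is positively oriented. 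The converse is one line: if $J|_E = J_E$, then $J_E^*(\alpha|_E) = (J^*\alpha)|_E = i\,\alpha|_E$.

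I do not anticipate any substantial obstacle here; both parts are standard linear algebra. The only bookkeeping to watch is keeping the $J$-type decomposition of $\Lambda^\bullet V_\C$ distinct from the $J_E$-type decomposition of $\Lambda^\bullet E_\C$ (the two being a priori unrelated), and fixing a single sign convention for $(1,0)$-forms throughout. With these settled, each implication reduces to a single translation between a wedge- or functional-vanishing statement in $V$ and an identity of complex structures on $E$.
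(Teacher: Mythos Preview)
Your argument is correct. The paper states this lemma without proof, treating it as standard linear algebra; your type-decomposition approach for (a) and the comparison $J_E$ versus $J|_E$ for (b) constitute a clean, self-contained verification, with nothing missing.
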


Finally, recall that $Z \times S^2$ has the structure of an $(S^2 \times S^2)$-bundle $q \colon Z \times S^2 \to X$.  We can now prove: 

\begin{thm}[Correspondence Theorem] \label{thm:Correspondence} Let $(M^7, \varphi_{a,b})$ be a compact squashed $3$-Sasakian $7$-manifold, where $a,b > 0$ are fixed.
\begin{enumerate}[label=(\alph*)]
\item Every Hopf-ruled associative $3$-fold in $(M^7, \varphi_{a,b})$ is locally of the form $\Gamma(\Sigma)$ for some $J$-holomorphic curve $\Sigma \subset Z \times S^2$.
\item Let $\Sigma \subset Z \times S^2$ be a $J$-holomorphic curve.
\begin{enumerate}[label=\arabic*.]
    \item If $\Sigma$ does not lie in an $(S^2 \times S^2)$-fiber, then there exists a discrete set $D \subset \Sigma$ such that $\Gamma(\Sigma - D) \subset M$ is a Hopf-ruled associative $3$-fold. 
    \item If $\Sigma$ lies in an $(S^2 \times S^2)$-fiber, then $\Gamma(\Sigma)$ is a subset of a canonical leaf.
\end{enumerate}
\end{enumerate}
\end{thm}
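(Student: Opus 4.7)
The plan is to translate both directions of the correspondence into algebraic statements on the coframe bundle $\mathcal{Q}_N$ of Section \ref{sec:Ruled2Surface}, exploiting Lemma \ref{lem:geometry-Hopf-ruled} together with the striped-associative structure equations \eqref{eq:TypeAstruct}.

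For part (a), suppose $u \colon N \to M$ is Hopf-ruled and associative. By Proposition \ref{prop:stripedHopf}, either $N$ lies in a canonical leaf (and its image in $Z \times S^2$ is a point, trivially $J$-holomorphic) or $N$ is striped, so one may work on the $\mathrm{O}(2)$-subbundle $\mathcal{B}_1 \to N$. The adaptation conditions \eqref{eq:frameadaptzeros} with $s = 0$ give $\theta_2 = \tfrac{x_2}{x_1}\mu_2$, $\theta_1 = -\tfrac{x_2}{x_1}\mu_3$, and $\theta_3 = \theta_4 = 0$; together with $\sigma := \epsilon_2 + i\epsilon_3$ this yields
\begin{equation*}
\pi_1 = \tfrac{x_1}{x_1^2 - x_2^2}\,\sigma, \qquad \pi_2 = \tfrac{\sqrt{2}\,ix_2}{x_1^2 - x_2^2}\,\sigma, \qquad \pi_3 = 0.
\end{equation*}
The structure equations \eqref{eq:TypeAstructb} place $\zeta_2, \zeta_3$ into the rotationally symmetric form $\zeta_2 = P\epsilon_2 - Q\epsilon_3$ and $\zeta_3 = Q\epsilon_2 + P\epsilon_3$ for explicit real functions $P, Q$, so $\pi_4 = \zeta_2 + i\zeta_3 = (P+iQ)\sigma$ as well. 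Writing $\pi_i = A_i\sigma + B_i\overline{\sigma}$ as in Lemma \ref{lem:geometry-Hopf-ruled}, one has $B_i = 0$ for every $i$, whence $\pi_i \wedge \pi_j = (A_iB_j - A_jB_i)\,\sigma \wedge \overline{\sigma} = 0$; by Lemma \ref{lem:holo-char}(a), the tangent plane of the associated surface $\Sigma \subset Z \times S^2$ is $J$-invariant, so $\Sigma$ is $J$-holomorphic.

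For part (b), case 1, let $\Sigma \subset Z \times S^2$ be $J$-holomorphic and not contained in any $(S^2 \times S^2)$-fiber. On the frame bundle $\mathcal{G}_\Sigma$ of Section \ref{sec:Surface2Ruled} there is a local $(1,0)$-form $\sigma$ for which $\pi_i|_\Sigma = A_i\,\sigma$, i.e., $B_i = 0$ for every $i$. Lemma \ref{lem:geometry-Hopf-ruled} then reduces both $\varphi_{a,b}|_N$ and $\vol_N$ to $\tfrac{i}{2}\|A\|^2\,\mu_1 \wedge \sigma \wedge \overline{\sigma}$, establishing the associative condition on the open set $\mathcal{G}_\Sigma^{\circ}$ where $\Gamma$ is an immersion; that open set is exactly the complement of the zero locus of $(A_1, A_2, A_3)$. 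Because $\Sigma$ is not in a fiber of $q$, at least one of $A_2, A_3$ is not identically zero, and the similarity principle for $J$-holomorphic curves forces the common zero set $D := \{(A_1, A_2, A_3) = 0\}$ to be discrete. For case 2, if $\Sigma \subset q^{-1}(x_0)$, then commutativity of the double-fibration diagram $h \circ \pi = q \circ \lambda$ gives $\Gamma(\Sigma) = \pi(\lambda^{-1}(\Sigma)) \subset h^{-1}(x_0)$, which is a canonical leaf.

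The main technical hurdle is the vanishing of $B_4$ in part (a). The associative condition alone, fed through Lemma \ref{lem:geometry-Hopf-ruled}, constrains only $A_1, A_2, A_3, B_1, B_2, B_3$ via the Cauchy--Schwarz equality $\|A\|^2\|B\|^2 = |\langle A, B\rangle|^2$, so \emph{a priori} there is no reason for $B_4$ to vanish. The key input is that the structure equations \eqref{eq:TypeAstructb} pin $\zeta_2, \zeta_3$ down to the precise skew-symmetric combination that makes $\zeta_2 + i\zeta_3$ a pure $\sigma$-multiple; this rigidity of the $\SO(4)$-geometry beyond the mere $\G_2$-condition is what makes the $J$-holomorphic side of the correspondence sharp.
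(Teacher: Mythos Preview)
Your approach to part (a) is genuinely different from the paper's and is essentially correct. The paper does \emph{not} invoke the striped structure equations \eqref{eq:TypeAstruct} here: instead it works on $\mathcal{Q}_N$, uses Lemma~\ref{lem:geometry-Hopf-ruled} to convert associativity into the Cauchy--Schwarz equality $\|A\|\,\|B\| = |\langle A,B\rangle|$ (so that $(A,B)$ are $\C$-linearly dependent in $\C^3$ and hence $\pi_i\wedge\pi_j=0$ for $i,j\le 3$), and then \emph{differentiates} $d(\pi_1\wedge\pi_2)=0$ and $d(\pi_1\wedge\pi_3)=0$ via the ambient structure equations to force $\pi_2\wedge\pi_4 = \pi_3\wedge\pi_4 = 0$, finally obtaining $\pi_1\wedge\pi_4=0$ by linear algebra. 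Your route---reading off $B_1=B_2=B_3=0$ from the frame adaptation and $B_4=0$ from the explicit skew form of $(\zeta_2,\zeta_3)$ in \eqref{eq:TypeAstructb}---is more direct and makes transparent exactly which feature of the $\SO(4)$-geometry forces the $\pi_4$-column, though it leans on the pre-derived striped equations and tacitly assumes $r\in(0,\pi/2)$. (Your displayed coefficients $\tfrac{x_1}{x_1^2-x_2^2}$ etc.\ are off---they should be $x_1$, $\sqrt{2}\,ix_2$---but this traces to a sign inconsistency between \eqref{eq:frameadaptzeros} and the definition of $\epsilon_2,\epsilon_3$ in the paper and does not affect the conclusion $B_i=0$.)

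For part (b) there is a real gap. You correctly identify the failure locus of the immersion as $\{A_1=A_2=A_3=0\}$, but the bare invocation of ``the similarity principle for $J$-holomorphic curves'' does not establish discreteness: those $A_i$ live on the frame bundle $\mathcal{G}_\Sigma$, not on $\Sigma$, and you have not shown they satisfy any $\overline\partial$-type equation. The paper fills exactly this gap by computing $dW_j \wedge \eta$ from the structure equations (its step~(iii)), recognising $(W_2,W_3)$ as a $G$-equivariant function corresponding to a section $\Phi_{\mathcal{H}}$ of an associated rank-$2$ complex bundle over $\Sigma$, and showing that this section is holomorphic for the Koszul--Malgrange structure (step~(iv)); discreteness of $D=\{W_2=W_3=0\}$ then follows immediately. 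Without this derivative computation your discreteness claim is unsupported. Your argument for case~2 via the double-fibration identity $h\circ\pi = q\circ\lambda$ is fine and matches the paper's.
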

\begin{proof} (a) Suppose that $u \colon N^3 \to M^7$ is an immersed Hopf-ruled associative $3$-fold.  Since $u$ is a Hopf-ruled $3$-fold, we may apply the formalism of $\S$\ref{sec:Ruled2Surface} and perform calculations on the space $\mathcal{Q}_N$.  Recall also that there exists a immersed surface $v \colon \Sigma \to Z \times S^2$ for which $\lambda^{-1}(\Sigma) \subset \mathcal{B}$ is the image of the natural map $\mathcal{Q}_N \to \mathcal{B}$. \\
\indent Now, since $u$ is associative, we have $\left.\varphi_{a,b}\right|_N = \vol_{a,b\,N}$.  Therefore, Lemma \ref{lem:geometry-Hopf-ruled} yields
$$\left( \Vert A \Vert^2 - \Vert B \Vert^2 \right)^2 = \left( \Vert A \Vert^2 + \Vert B \Vert^2 \right)^2 - 4 \left| \langle A,B \rangle \right|^2.$$
Expanding and simplifying yields $\Vert A \Vert \Vert B \Vert =  \left| \langle A,B \rangle\right|$, and so $\{A,B\}$ is a $\C$-linearly dependent set in $\C^3$.  It follows that
\begin{align*}
\pi_1 \wedge \pi_2 & = 0 & \pi_1 \wedge \pi_3 & = 0 & \pi_2 \wedge \pi_3 & = 0.
\end{align*}
In particular, $d(\pi_1 \wedge \pi_2) = 0$ and $d(\pi_1 \wedge \pi_3) = 0$.  Expanding these via the structure equations (\ref{eq:3SasakStruct}), 
we obtain $A_2B_4 - A_4B_2 = 0$ and $A_3B_4 - A_4B_3 = 0$, whence $\pi_2 \wedge \pi_4 = 0$ and $\pi_3 \wedge \pi_4 = 0$.  Considering the vectors $(A_1, A_2, A_3, A_4)$ and $(B_1, B_2, B_3, B_4)$ in $\C^4$, a linear algebra argument shows that $A_1B_4 - A_4B_1 = 0$, so that $\pi_1 \wedge \pi_4 = 0$. \\
\indent Altogether, we have shown that $\pi_i \wedge \pi_j = 0$ for all $i,j = 1, \ldots, 4$.  By Lemma \ref{lem:holo-char}(a), we conclude that $v \colon \Sigma \to Z \times S^2$ is a $J$-holomorphic curve. \\

\indent (b) Let $v \colon \Sigma \to Z \times S^2$ be an immersed $J$-holomorphic curve.  Since $v 
\colon \Sigma \to Z \times S^2$ is an immersed surface, we may apply the formalism of $\S$\ref{sec:Surface2Ruled} and perform calculations on the space $\mathcal{G}_\Sigma$.  Since $v$ is $J$-holomorphic, Lemma \ref{lem:holo-char}(b) implies that there exist functions $W_1$, $W_2$, $W_3$, $W_4 \colon \mathcal{G}_\Sigma \to \C$ for which
\begin{align*}
\pi_1 & = W_1 \eta & \pi_2 & = W_2 \eta & \pi_3 & = W_3 \eta & \pi_4 & = W_4 \eta.
\end{align*}
and since $\omega_{a,b}|_\Sigma = \vol_\Sigma$,
\begin{equation}\label{eq:Wrelation}
    |W_1|^2+|W_2|^2+|W_3|^2+|W_4|^2 = 1
\end{equation}
The proof will consist of five steps: (i) Show that the image of $\mathcal{G}_\Sigma^{\circ} \to M^7$ is associative; (ii) Show that $\mathcal{G}_\Sigma - \mathcal{G}_\Sigma^\circ = \{W_1 = W_2 = W_3 = 0\}$; (iii) Compute the derivatives of $W_1, \ldots, W_4$; (iv) Exhibit $\mathcal{Y} := \{W_2 = W_3 = 0\}$ as the zero set of a holomorphic section over $\Sigma$; (v) Complete the proof. \\

\indent (i) Let $N \subset M$ be the image of $\mathcal{G}_\Sigma^\circ \to M$.  Recall that the $\G_2$-structure and metric on $M^7$ are
\begin{align*}
\varphi_{a,b} & = \textstyle \mu_1 \wedge \frac{i}{2} (\pi_1 \wedge \overline{\pi}_1 + \frac{1}{2}\pi_2 \wedge \overline{\pi}_2 + \frac{1}{2}\pi_3 \wedge \overline{\pi}_3) + \frac{1}{2}\text{Re}(\pi_1 \wedge \pi_2 \wedge \pi_3) \\
g_{a,b} & = \textstyle \mu_1^2 + \pi_1 \circ \overline{\pi}_1 + \frac{1}{2}\pi_2 \circ \overline{\pi}_2 + \frac{1}{2}\pi_3 \circ \overline{\pi}_3
\end{align*}
Therefore, on $\mathcal{G}_\Sigma$:
\begin{align*}
\varphi_{a,b} & = \textstyle \left( |W_1|^2 + \frac{1}{2}|W_2|^2 + \frac{1}{2}|W_3|^2 \right) \frac{i}{2}\mu_1 \wedge \eta \wedge \overline{\eta} \\
g_{a,b}|_{\mathcal{G}_\Sigma} & = \textstyle \mu_1^2 + \left( |W_1|^2 + \frac{1}{2}|W_2|^2 + \frac{1}{2}|W_3|^2 \right) \eta \circ \overline{\eta}
\end{align*}
The volume form on $N$ induced from $g_{a,b}|_{\mathcal{G}_\Sigma}$ is then
$$\textstyle \text{vol}_N = \left( |W_1|^2 + \frac{1}{2}|W_2|^2 + \frac{1}{2}|W_3|^2 \right)\frac{i}{2}\alpha_1 \wedge \eta \wedge \overline{\eta}.$$
This shows that the image of $\mathcal{G}_\Sigma^\circ \to M$ is associative.  To complete the proof, we need to construct $\Sigma^\circ := \Sigma - D$.  We will do this by identifying an open dense set $\Sigma^\circ \subset \Sigma$ with the property that $\mathcal{G}_{\Sigma^{\circ}} \subset \mathcal{G}_\Sigma^\circ$. \\

\indent (ii) On $\mathcal{G}_\Sigma$, we have
\begin{align*}
\mu_2 \wedge \mu_3 & = \frac{i}{2}|W_1|^2\,\eta \wedge \overline{\eta} & \theta_1 \wedge \theta_2 & = \frac{i}{4}|W_2|^2\,\eta \wedge \overline{\eta} & \theta_3 \wedge \theta_4 & = \frac{i}{4}|W_3|^2\,\eta \wedge \overline{\eta}
\end{align*}
and
\begin{align*}
\theta_2 \wedge \theta_4 & = \theta_1 \wedge \theta_3 = \textstyle \frac{1}{8}\left(W_2\overline{W}_3 - \overline{W}_2W_3\right)\eta \wedge \overline{\eta}  \\
\theta_1 \wedge \theta_4 & = - \theta_2 \wedge \theta_3 = \textstyle \frac{i}{8}\left(W_2 \overline{W}_3 + \overline{W}_2W_3\right)\eta \wedge \overline{\eta}  \\
\mu_2 \wedge \theta_1 & = \mu_3 \wedge \theta_2 = \textstyle \frac{1}{4\sqrt{2}} \left(W_1 \overline{W}_2 - \overline{W}_1W_2\right)\eta \wedge \overline{\eta} \\
\mu_2 \wedge \theta_2 & = -\mu_3 \wedge \theta_1 =  \textstyle \frac{i}{4\sqrt{2}}\left(W_1\overline{W}_2 + \overline{W}_1W_2\right)\eta \wedge \overline{\eta} \\
\mu_2 \wedge \theta_3 & = \mu_3 \wedge \theta_4 = \textstyle \frac{1}{4\sqrt{2}} \left(W_1\overline{W}_3 - \overline{W}_1W_3
\right)\eta \wedge \overline{\eta} \\
\mu_2 \wedge \theta_4 & = - \mu_3 \wedge \theta_3 = \textstyle \frac{i}{4\sqrt{2}} \left(W_1\overline{W}_3 + \overline{W}_1W_3\right)\eta \wedge \overline{\eta}
\end{align*}
It follows that $\mathcal{G}_\Sigma - \mathcal{G}_\Sigma^\circ = \{W_1 = W_2 = W_3 = 0\}$. \\ 

\indent (iii) Let $\psi$ denote the connection 1-form for the Levi-Civita connection on $\Sigma$ associated to the metric $h_\Sigma,$ so that $\eta$ satisfies the the structure equation
\begin{equation*}
    d \eta = i \, \psi \wedge \eta.
\end{equation*}
Now, for each $j = 1,2,3,4$, we have
$$dW_j \wedge \eta = d(W_j\eta) - W_j\,d\eta = d\pi_j - iW_j \psi \wedge \eta.$$
Calculating $d\pi_j$ via the structure equations (\ref{eq:3SasakStruct}), we find
\begin{align*}
d(W_1 - i a W_4) \wedge \eta & = 2i(W_1 - i a W_4) \alpha_1 \wedge \eta + 2 i \left( W_1 - i a W_4 \right) \zeta_1 \wedge \eta - i(W_1 - i a W_4) \psi \wedge \eta \\
dW_2 \wedge \eta & = - iW_2  \alpha_1 \wedge \eta - i W_2 \zeta_1 \wedge \eta - iW_2  \nu_1 \wedge \eta - W_3(  \nu_2 - i\nu_3) \wedge \eta  - iW_2 \psi \wedge \eta \\
dW_3 \wedge \eta & = -i W_3 \alpha_1 \wedge \eta - iW_3  \zeta_1 \wedge \eta + iW_3  \nu_1 \wedge \eta - W_2( - \nu_2 - i\nu_3 )\wedge \eta - iW_3 \psi \wedge \eta \\
d W_4 \wedge \eta & = 2i W_4 \zeta_1 \wedge \eta - i W_4 \psi \wedge \eta
\end{align*}
Set $X := W_1 - i a W_4$. By Cartan's Lemma, there exist functions $F_1, F_2, F_3, F_4 \colon \mathcal{G}_\Sigma \to \C$ for which
\begin{equation} \label{eq:WDerivatives}
\begin{aligned}
d X  & = 2i X (\alpha_1 + \zeta_1)  - iX \psi + F_1 \eta, \\
d W_4 & = 2i W_4 \zeta_1  - i W_4 \psi  + F_4 \eta, \\
\begin{bmatrix} dW_2 \\ dW_3 \end{bmatrix} & = - \begin{pmatrix} i\nu_1 + i \zeta_1 & -\nu_2 - i\nu_3 \\ \nu_2 - i\nu_3 & -i\nu_1 + i \zeta_1 \end{pmatrix} \begin{bmatrix} W_2 \\ W_3 \end{bmatrix} + \begin{pmatrix} -i\alpha_1 - i\psi & 0 \\ 0 & -i\alpha_1 - i \psi \end{pmatrix} \begin{bmatrix} W_2 \\ W_3 \end{bmatrix} + \begin{bmatrix} F_2 \\ F_3 \end{bmatrix}\eta.
\end{aligned}
\end{equation}
\indent (iv) The first two equations of (\ref{eq:WDerivatives}) imply that $|X|^2$ and $|W_4|^2$ descend to well-defined functions on $\Sigma$. Similarly, the last equation of (\ref{eq:WDerivatives}) implies that $|W_2|^2 + |W_3|^2$ descends to a well-defined function on $\Sigma$.  We may thus consider the following set:
\begin{align*}
\mathcal{Y} & := \{W_2 = W_3 = 0\} \subset \Sigma
\end{align*}
\indent Let $G = \mathrm{T}^2 \times \U(2),$ the structure group of the bundle $\mathcal{G}_\Sigma \to \Sigma$. The vector fields dual to $\alpha_1$ and $\psi$ generate the $\mathrm{T}^2$-action on $\mathcal{G}_\Sigma$ and the vector fields dual to $\zeta_1, \nu_1, \nu_2, \nu_3$ generate the $\mathrm{U}(2)$-action on $\mathcal{G}_\Sigma$.  The function $X \colon \mathcal{G}_\Sigma \to \C$ is $G$-equivariant with respect to the $G$-representation on $\C$ for which $\mathrm{U}(2)$ acts via the determinant representation and $\mathrm{T}^2$ acts with weights $(2,-1)$ and the function $W_4 \colon \mathcal{G}_\Sigma \to \C$ is $G$-equivariant with respect to the $G$-representation on $\C$ for which $\mathrm{U}(2)$ acts via the determinant representation and $\mathrm{T}^2$ acts with weights $(0,-1)$. Therefore, $X$ and $W_4$ correspond to sections $\Phi_{\mathcal{X}}, \Phi_{\mathcal{W}_4}$ of associated complex line bundles $\mathscr{L}_\mathcal{X}, \mathscr{L}_{\mathcal{W}_4}$ over $\Sigma$.   Similarly, $(W_2, W_3) \colon \mathcal{G}_\Sigma \to \C$ is $G$-equivariant with respect to the $G$-representation on $\C^2$ given by the tensor product of the standard $\U(2)$-representation with the $\mathrm{T}^2$-representation having weights $(-1,-1)$ and so corresponds to a section $\Phi_{\mathcal{H}}$ of an associated rank-$2$ complex vector bundle $\mathscr{L}_{\mathcal{H}} \to \Sigma$. \\
\indent Let us equip all of these vector bundles with their Koszul-Malgrange holomorphic structure with respect to the connection on the principal $G$-bundle $\mathcal{G}_\Sigma \to \Sigma$ whose connection forms are $\left( \alpha_1, \psi, \zeta_1, \nu_1, \nu_2, \nu_3 \right)$.  Equations (\ref{eq:WDerivatives}) show that the sections $\Phi_{\mathcal{X}}, \Phi_{\mathcal{Y}}, \Phi_{\mathcal{H}}$ are holomorphic.  Moreover, since
\begin{align*}
\mathcal{Y} & = \{ \Phi_{\mathcal{H}} = 0 \}
\end{align*}
we see that $\mathcal{Y}$ is the vanishing locus of a holomorphic section.  In particular, $\mathcal{Y}$ is discrete or $\mathcal{Y} = \Sigma$. \\

\indent (v) If $\mathcal{Y}$ is discrete, then we are done: the set $D$ is simply $\mathcal{Y}$. We therefore assume for the rest of the proof that $\Sigma = \mathcal{Y}.$

The differential ideal $\langle \pi_2, \pi_3 \rangle = \langle \theta_1, \theta_2, \theta_3, \theta_4 \rangle$ descends to $Z \times S^2$ to define a Frobenius distribution. The 4-dimensional integral submanifolds of this distribution are precisely the fibres of the projection $q \colon Z \times S^2 \to X.$ Therefore, the condition $\Sigma = \mathcal{Y}$ implies $\Sigma$ lies entirely in a fiber of $q$. On the other hand, the ideal $\langle \theta_1, \theta_2, \theta_3, \theta_4 \rangle$ descends to $M$ to define a Frobenius distribution whose 3-dimensional integral submanifolds are precisely the canonical leaves, i.e., the fibers of $h \colon M \to X.$ It follows that if $\Sigma$ lies entirely in an $(S^2 \times S^2)$-fiber, then $\Gamma(\Sigma)$ is a subset of a canonical leaf.
\end{proof}

The proof of Theorem \ref{thm:Correspondence}(b) yields more information than is stated in the theorem. In particular, we have:

\begin{cor} \label{cor:MainCorollary}
    The set $D$ consists of the points of $\Sigma$ tangent to the fibers of the projection $q \colon Z \times S^2 \to X.$ Therefore, if $\Sigma \subset Z \times S^2$ is a compact $J$-holomorphic curve nowhere tangent to the fibers of $Z \times S^2 \to X,$ then $\Gamma(\Sigma)$ is a smooth compact Hopf-ruled associative 3-fold in $M.$ 
\end{cor}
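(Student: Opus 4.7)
The plan is to identify the discrete set $D$ produced in the proof of Theorem \ref{thm:Correspondence}(b) with the locus of points at which $\Sigma$ is tangent to a $q$-fiber, and then observe that the compactness statement falls out immediately.

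First I recall from step (v) of the proof of Theorem \ref{thm:Correspondence}(b.1) that, when $\Sigma$ does not lie in an $(S^2\times S^2)$-fiber, the removed set is exactly
\[
D \;=\; \mathcal{Y} \;=\; \{\,p\in\Sigma : W_2(p)=W_3(p)=0\,\},
\]
where $W_2,W_3\colon \mathcal{G}_\Sigma\to\C$ are the functions satisfying $\pi_2 = W_2\eta$ and $\pi_3 = W_3\eta$, and $\eta$ is the nowhere-vanishing complex tautological $1$-form of $\mathcal{F}_\Sigma$. The next step is to describe the vertical distribution $\ker q_*$ of $q\colon Z\times S^2\to X$ in terms of the tautological forms on $\mathcal{B}$. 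From the double-fibration diagram at the end of $\S$\ref{sec:HCNew} we have $h\circ\pi = q\circ\lambda$, and by construction $\beta_1,\beta_2,\beta_3,\beta_4$ are precisely the $(h\circ\pi)$-semibasic $1$-forms on $\mathcal{B}$. Their images under $\lambda$ therefore cut out $\ker q_*$ on $Z\times S^2$. Equivalently, using the rescaling $\theta_i=b\,\beta_i$, the surface $v\colon\Sigma\to Z\times S^2$ is tangent to a $q$-fiber at $p$ if and only if $v^*\theta_i$ vanishes at $p$ for every $i=1,2,3,4$.

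Combining these two observations is a one-line computation on $\mathcal{G}_\Sigma$: choosing any lift $\tilde p$ of $p$, the forms $\pi_2=\sqrt{2}(\theta_1+i\theta_2)=W_2\eta$ and $\pi_3=\sqrt{2}(\theta_3+i\theta_4)=W_3\eta$ vanish on vectors projecting to $T_p\Sigma$ iff $W_2(\tilde p)=W_3(\tilde p)=0$, because $\eta$ is nowhere zero. This shows $v^*\theta_i|_p=0$ for all $i$ exactly when $p\in\mathcal{Y}$, identifying $D=\mathcal{Y}$ with the fiber-tangency locus of $q$, which is the first assertion.

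For the second assertion, suppose $\Sigma$ is a compact $J$-holomorphic curve nowhere tangent to the $q$-fibers. By the identification just proved, $D=\emptyset$; moreover $\Sigma$ cannot lie in a single $(S^2\times S^2)$-fiber, since such a $\Sigma$ would be tangent to that fiber everywhere. Hence Theorem \ref{thm:Correspondence}(b.1) applies to give that $\Gamma(\Sigma)=\Gamma(\Sigma-D)$ is a smooth Hopf-ruled associative $3$-fold in $M$. Compactness is automatic: $\Gamma(\Sigma)=\pi(\lambda^{-1}(\Sigma))$, and $\lambda^{-1}(\Sigma)\subset\mathcal{B}$ is compact as a $(\U(1)\times\U(2))$-orbibundle over the compact surface $\Sigma$, so its continuous image in $M$ is compact. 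The only real obstacle is the translation between the tautological forms on $\mathcal{B}$ and the geometric conditions on $Z\times S^2$ via the $\lambda$-fibration; with that translation in hand, no further calculation is required.
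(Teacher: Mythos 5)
Your argument is correct and matches the route the paper intends: it identifies $D=\mathcal{Y}=\{W_2=W_3=0\}$ with the $q$-fiber tangency locus via the fact (already noted in step (v) of the proof of Theorem \ref{thm:Correspondence}) that $\langle \pi_2,\pi_3\rangle = \langle\theta_1,\theta_2,\theta_3,\theta_4\rangle$ cuts out the vertical distribution of $q$, and then deduces compactness from $\Gamma(\Sigma)=\pi(\lambda^{-1}(\Sigma))$ with $\lambda^{-1}(\Sigma)$ compact. No gaps; this is essentially the paper's own (implicit) argument.
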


\indent Now, recall from $\S$\ref{sec:NKStr} that we have equipped the twistor space $Z$ with its standard strict nearly-K\"{a}hler structure.  Recall also from $\S$\ref{sec:3-Sasakian} that a submanifold of $Z$ is \emph{horizontal} if it is tangent to $\mathsf{H} \subset TZ$, the distribution orthogonal to the fibers of the twistor projection $\tau \colon Z \to X$.  We observe:

\begin{prop}\label{prop:HorzHolo} Let $f \colon \Sigma \to Z$ be a horizontal immersed pseudo-holomorphic curve, equip $\Sigma$ with the induced complex structure, and let $w \colon \Sigma \to S^2$ be a holomorphic map.  Then $(f,w) \colon \Sigma \to Z \times S^2$ is a $J$-holomorphic curve that is nowhere tangent to the fibers of $q \colon Z \times S^2 \to X$. %
\end{prop}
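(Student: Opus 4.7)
My plan is to verify the two claims in the statement using Lemma~\ref{lem:holo-char}(b) and the horizontality of $f$. The first claim (that $(f,w)$ is $J$-holomorphic) will follow by pulling back the $(1,0)$-forms $\pi_1,\pi_2,\pi_3,\pi_4$ from $\S\ref{sec:almost-cpl-str}$ to $\Sigma$; the second (that $(f,w)$ is nowhere tangent to an $(S^2\times S^2)$-fiber of $q$) will follow from a dimensional argument using $\mathsf{H}\cap\mathsf{V}=\{0\}$.

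For the first claim, I recall from $\S\ref{sec:NKStr}$ that each slice $Z\times\{w_0\}$ inherits the strict nearly-K\"{a}hler structure on $Z$, whose $(1,0)$-forms lift to $\pi_1,\pi_2,\pi_3$ on $\mathcal{B}$; and that each slice $\{z_0\}\times S^2$ inherits the standard complex structure on $S^2$, whose $(1,0)$-form lifts to $\pi_4$. Choosing any local lift of $(f,w)$ to $\mathcal{B}$, the pullback of $\pi_i$ (for $i=1,2,3$) is $f^{\ast}\pi_i$ and the pullback of $\pi_4$ is $w^{\ast}\pi_4$. Since $f$ is pseudo-holomorphic, $f^{\ast}\pi_2$ and $f^{\ast}\pi_3$ are $(1,0)$-forms on $\Sigma$; since $f$ is horizontal and $\pi_1=\mu_2+i\mu_3$ annihilates $\mathsf{H}\subset TZ$, we also have $f^{\ast}\pi_1=0$. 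And since $w$ is holomorphic, $w^{\ast}\pi_4$ is $(1,0)$. Hence every $(1,0)$-form on $Z\times S^2$ pulls back to a $(1,0)$-form on $\Sigma$, and Lemma~\ref{lem:holo-char}(b) will yield that $(f,w)$ is $J$-holomorphic. A minor bookkeeping point to confirm is the orientation hypothesis in Lemma~\ref{lem:holo-char}(b): the orientation on $\Sigma$ induced by $f\colon\Sigma\to Z$ will agree with the one for which $(f,w)^{\ast}\omega_{a,b}$ is a positive multiple of the area form, and this positivity will follow from $f^{\ast}\pi_1=0$ together with the immersivity of $f$.

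For the second claim, I argue by contradiction. The vertical tangent space of $q\colon Z\times S^2\to X$ at $(z_0,w_0)$ is $\mathsf{V}_{z_0}\oplus T_{w_0}S^2$. If $(f,w)_{\ast}T_p\Sigma$ were contained in this $4$-plane for some $p\in\Sigma$, then projecting to $TZ$ would yield $f_{\ast}T_p\Sigma\subset\mathsf{V}_{f(p)}$. But $f$ is horizontal, so $f_{\ast}T_p\Sigma\subset\mathsf{H}_{f(p)}$, and since $\mathsf{H}\cap\mathsf{V}=\{0\}$ one concludes $f_{\ast}T_p\Sigma=0$, contradicting the hypothesis that $f$ is an immersion. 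This second step is immediate; the only part of the proof that requires any care is confirming the identification of $J$ with the product of the nearly-K\"{a}hler structure on $Z$ and the standard complex structure on $S^2$ used in the first step, but this is exactly what $\S\ref{sec:NKStr}$ supplies, so I expect no genuine obstacle.
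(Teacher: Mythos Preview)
The paper states Proposition~\ref{prop:HorzHolo} without proof, so there is nothing to compare against; your argument correctly supplies the missing details. Both steps are sound: the product description of $J$ (with $\pi_1,\pi_2,\pi_3$ spanning the $(1,0)$-forms on the $Z$ factor and $\pi_4$ on the $S^2$ factor) together with horizontality gives $f^*\pi_1=0$ and $f^*\pi_2,f^*\pi_3,w^*\pi_4\in\Lambda^{1,0}(\Sigma)$, and the transversality step via $\mathsf{H}\cap\mathsf{V}=\{0\}$ is immediate.

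One small point worth tightening: the forms $\pi_i$ live on $\mathcal{B}$ rather than on $Z\times S^2$, so strictly speaking you are pulling back along a local lift $\widetilde{(f,w)}\colon\Sigma\to\mathcal{B}$, and the assertion that ``$\pi_1,\pi_2,\pi_3$ see only $f$ while $\pi_4$ sees only $w$'' amounts to the fact that $\pi_1,\pi_2,\pi_3$ are semibasic for $\mathcal{B}\to Z$ and $\pi_4$ is semibasic for $\mathcal{B}\to S^2$. This is implicit in $\S\ref{sec:HopfCircle}$ (the product decomposition $Z\times S^2$ with $\zeta_2,\zeta_3$ as the $S^2$ directions), but you might state it explicitly. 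The orientation check you flag is handled by $f^*\pi_1=0$ and the immersivity of $f$, as you note.
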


Proposition \ref{prop:HorzHolo} together with Corollary \ref{cor:MainCorollary} will be the key tool in our construction of compact associative submanifolds in the squashed 7-sphere $S^7$ and squashed Aloff-Wallach space $N_{1,1}$ in \S\ref{sec:examples}.

\begin{rmk} Let $(M, \varphi_{a,b}, \mathsf{A})$ be a squashed $3$-Sasakian $7$-manifold. What follows is a heuristic description of the Hopf-ruled associative $3$-folds $\Gamma(\Sigma) \subset M$. Choose a trivialization of $\mathsf{A} \subset TM$, so that the projections $p_w \colon M \to Z$ (for $w \in S^2$) from $\S$\ref{sec:3-Sasakian} are well-defined.  If $\Sigma \subset Z$ is a pseudo-holomorphic curve with respect to the strict nearly-K\"{a}hler structure on $Z$, then for a fixed $w \in S^2$, the pre-image
$$p_w^{-1}(\Sigma) = \bigcup_{z \in \Sigma} p_w^{-1}(z) \subset M$$
is an associative $3$-fold (by the squashed analogue of Proposition \ref{prop:trivial-submanifolds}(a) followed by Proposition \ref{prop:trivial-associatives}) that is clearly Hopf-ruled.  Now, by varying the ruling direction $w$ in a suitable fashion, we can obtain a larger class of examples.  That is, heuristically speaking, the general Hopf-ruled associative in $M$ is a submanifold of the form
$$ \bigcup_{z \in \Sigma} p_{w(z)}^{-1}(\Sigma) \subset M,$$
where the ``directrix surface" $f \colon \Sigma \to Z$ is a pseudo-holomorphic map, and the ``ruling function" $w \colon \Sigma \to S^2$ is a holomorphic map.  As the Correspondence Theorem indicates, this heuristic is valid after removing a discrete set $D$ from $\Sigma$ and avoiding any pair $(f,w) \colon \Sigma \to Z \times S^2$ that lies in an $(S^2 \times S^2)$-fiber of $q \colon Z \times S^2 \to X$.
\end{rmk}

\begin{rmk}
Every $3$-Sasakian $(4n+3)$-manifold $M^{4n+3}$ carries a generalized associative calibration $\phi_K \in \Omega^3(M)$, as defined by Bryant and Harvey \cite{bryharv89}.  We expect that our construction generalizes straightforwardly to that setting.
\end{rmk}

\section{Construction of Compact Associatives}\label{sec:examples}

\indent \indent In this section, we prove Theorems \ref{thm:AssocSphere} and \ref{thm:AssocAW}.  That is, we will apply Proposition \ref{prop:HorzHolo} and Corollary \ref{cor:MainCorollary} to construct infinitely many topological types of non-trivial, compact associative $3$-folds in the squashed $7$-spheres $(S^7, \varphi_{a,b})$ and squashed Aloff-Wallach spaces $(N_{1,1}, \varphi_{a,b})$.

\subsection{The $7$-Sphere} \label{sec:7Sphere}

\indent \indent We begin by considering the $7$-sphere.  There are three natural families of $\G_2$-structures on $S^7$, depending on whether one views the $7$-sphere as
$$\frac{\Spin(7)}{\G_2} \ \ \text{ or } \ \ \frac{\Sp(2)\Sp(1)}{\SO(4)} \ \ \text{ or } \ \ \frac{\Sp(2)}{\Sp(1)}.$$
\indent First, regarding $S^7 = \Spin(7)/\G_2$, it is well-known that $S^7$ admits a unique $\Spin(7)$-invariant $\G_2$-structure $\widehat{\varphi}$ up to scale.  In fact, $\widehat{\varphi}$ is nearly-parallel and induces the round metric on $S^7$.  For the sake of context, in $\S$\ref{sec:StandardS7} we briefly review the literature on associative $3$-folds in $(S^7, \widehat{\varphi})$. \\
\indent Second, regarding $S^7 = \Sp(2)\Sp(1)/\SO(4)$, one can show that $S^7$ admits a $2$-parameter family $\varphi_{a,b}$ of $\Sp(2)\Sp(1)$-invariant $\G_2$-structures, where the parameters are chosen so that $\varphi_{1,1}$ induces the round metric $g_{1,1}$ of constant curvature $1$.  In fact, letting $\mathsf{A} \subset TS^7$ denote the vertical bundle for the quaternionic Hopf fibration $h \colon S^7 \to S^4$, the $\SO(4)$-structure $(\varphi_{1,1}, \mathsf{A})$ is quaternion-Sasakian, and the $2$-parameter family $\varphi_{a,b}$ consists of its squashings.  Consequently, by the discussion in $\S$\ref{sec:Squashed3Sas}, each $\varphi_{a,b}$ is co-closed, and $\varphi_{a,b}$ is nearly-parallel if and only if $b^2 = 5a^2$.  Moreover, $\varphi_{1,1}$ is isometric to the nearly-parallel $\widehat{\varphi}$. \\ 
\indent Third, regarding $S^7 = \Sp(2)/\Sp(1)$, it was recently shown by Loubeau, Moreno, S\`{a} Earp, and Saavedra \cite{loubeau2022harmonic} that the space of $\Sp(2)$-invariant $\G_2$-structures on $S^7$ is parametrized by the $10$-manifold $\R^+ \times \GL_3^+(\R)$.  Note that $\widehat{\varphi}$ and $\varphi_{a,b}$ all belong to this $10$-dimensional family.  We will not consider these $\G_2$-structures here, but refer the reader to \cite{loubeau2022harmonic}.

\subsubsection{Associative Submanifolds in the Round $7$-Sphere $(S^7, \widehat{\varphi})$} \label{sec:StandardS7}

\indent \indent Associative $3$-folds in $(S^7, \widehat{\varphi})$ have been studied in several works:

\begin{itemize}
    \item Fox \cite{fox2007cayley} proved a correspondence theorem relating \emph{geodesically-ruled} associative $3$-folds in $(S^7, \widehat{\varphi})$ to $J$-holomorphic curves in the $12$-manifold $\Gr_2^+(\R^8) \cong \Spin(7)/\U(3)$, where $J$ is a particular $\Spin(7)$-invariant non-integrable almost-complex structure.  Using this, he constructed
    constructed a wealth of geodesically-ruled associative $3$-folds in $(S^7, \widehat{\varphi})$ from minimal surfaces in $S^6 = \Spin(7)/\SU(4)$ and meromorphic functions.  See also \cite[$\S$6]{lotay2012associative}.
    \item Lotay \cite{lotay2012associative} classified the homogeneous associatives in $(S^7, \widehat{\varphi})$ up to $\Spin(7)$-congruence, building upon work of Mashimo \cite{mashimo1985homogeneous}.  Moreover, for each non-constant-curvature minimal $2$-sphere in $S^6$, Lotay constructed a geodesically-ruled associative in $(S^7, \widehat{\varphi})$ that admits an $S^1$-family of non-congruent, global isometric deformations.
    \item Kawai \cite{kawai2014deformations}, \cite{kawai2018second} studied the deformation theory of the $8$ homogeneous associatives in $(S^7, \widehat{\varphi})$.  This was revisited by Moreno and S\`{a} Earp \cite{moreno2017weitzenb} and Moore \cite{moore2019deformations}.
    \end{itemize}
Up to $\Spin(7)$-congruence, the $8$ homogeneous associative $3$-folds in $(S^7, \widehat{\varphi})$ are:
\begin{enumerate}
    \item $A_1 \approx T^3$ is a $T^3$-orbit in $S^7 \subset \C^4$, where $T^3 \leq \SU(4)$ is the maximal torus.  In fact, $A_1$ is special Legendrian: it is the link of the $T^3$-invariant special Lagrangian cone in $\C^4$ discovered by Harvey and Lawson \cite[$\S$III.3]{harvey1982calibrated}.
    \item $A_2 \approx S^3/\Z_3$ is an $\SU(2)$-orbit in $S^7 \subset \C^4$, where the $\SU(2)$-action on $\Sym^3(\C^2) = \C^4$ is irreducible.  In fact, $A_2$ is complex Legendrian: it is of the form $A_2 = p_w^{-1}(\Sigma_0)$ for a suitable $w \in S^2$, where $\Sigma_0 \subset \CP^3$ is the Veronese $\CP^1$, which is horizontal for the twistor fibration $\tau \colon \CP^3 \to S^4$.
    \item $A_3 \approx S^3$ is an $\SU(2)$-orbit in $S^7 \subset \C^4$, where $\SU(2)$ acts on $\Sym^3(\C^2) = \C^4$ via its irreducible representation.
    \item $L_1 \approx S^3$, $L_2 \approx 
    \RP^3$, $L_3 \approx S^3/A_4^*$, and $L_4 \approx S^3/D_3^*$ are Lagrangian submanifolds of a totally geodesic $S^6 \subset S^7$.  In particular, $L_1$ is the link of the Lawson-Osserman coassociatve cone, and is $\U(2)$-invariant.  Also, $L_2 \subset S^5$ is a CR $3$-fold in a totally-geodesic $S^5 \subset S^6$ that is $\SU(2)$-invariant.  Finally, $L_3$ and $L_4$ are orbits of irreducible the $\SO(3)$-action on $\R^7$.
    \item $P_1 \approx S^3$ is the totally-geodesic Hopf $3$-sphere.
\end{enumerate}

\subsubsection{Associative Submanifolds in Squashed $7$-Spheres $(S^7, \varphi_{a,b})$} \label{sec:SquashedS7}

\indent \indent To our knowledge, the only previous work on associative $3$-folds in the squashed $7$-spheres $(S^7, \varphi_{a,b})$ is that of Kawai \cite{kawai2015some}, who classifies the homogeneous associatives in the nearly-parallel $(S^7, \varphi_{a, a\sqrt{5}})$ for $a = 3/5$ and studies their deformation theory. Up to $\Sp(2)\Sp(1)$-congruence, he shows that the $5$ homogeneous associative $3$-folds in $(S^7, \varphi_{a, a\sqrt{5}})$ are:
\begin{enumerate}
    \item $A_1 \approx T^3$ is is a $T^3$-orbit in $S^7 \subset \C^4$, where $T^3 \leq \SU(4)$ is the maximal torus.  In fact, $A_1 = p_w^{-1}(\Sigma)$ for some $w \in S^2$ and some pseudo-holomorphic curve $\Sigma \subset \CP^3$.
    \item $A_2 \approx S^3/\Z_3$ is an $\SU(2)$-orbit in $S^7 \subset \C^4$, where $\SU(2)$ acts on $\Sym^3(\C^2) = \C^4$ via its irreducible representation.  In fact, $A_2 = p_w^{-1}(\Sigma_0)$ for some $w \in S^2$, where $\Sigma_0 \subset \CP^3$ is the Veronese $\CP^1$, which is horizontal for the twistor fibration $\tau \colon \CP^3 \to S^4$. 
    \item $A_3 \approx S^3$ is an $\SU(2)$-orbit in $S^7 \subset \C^4$, where $\SU(2)$ acts on $\Sym^3(\C^2) = \C^4$ via its irreducible representation.  In fact, $A_3 = p_w^{-1}(\Sigma)$ for some $w \in S^2$, where $\Sigma \subset \CP^3$ is a \emph{null-torsion} pseudo-holomorphic $\CP^1$ in $\CP^3$.
    \item $P_1 \approx S^3$ is the totally-geodesic (Hopf) $3$-sphere given by $P_1 = \{(z_1, z_2, 0, 0) \in \C^4\} \cap S^7$.
    \item $P_2 \approx S^3$ is the totally-geodesic $3$-sphere given by $P_2 = \{(z_1, 0, z_3, 0) \in \C^4\} \cap S^7$.
\end{enumerate}
\indent The above associative $3$-folds in $(S^7, \varphi_{a, a\sqrt{5}})$ are all \emph{trivial}, in the sense that they are of the form $p_w^{-1}(\Sigma)$ for a fixed $w \in S^2$ and pseudo-holomorphic curve $\Sigma \subset \CP^3$.  Applying our theory to $M = S^7$, so that $Z = \CP^3$ and $X = S^4$, we can construct non-trivial examples as follows:

\begin{thm} Fix $a,b > 0$.  For every $g \geq 0$, there exists a non-trivial compact associative $3$-fold in $(S^7, \varphi_{a,b})$ diffeomorphic to an $S^1$-bundle over a genus $g$ surface.
\end{thm}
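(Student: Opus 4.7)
The plan is to apply Proposition \ref{prop:HorzHolo} together with Corollary \ref{cor:MainCorollary} to a well-chosen pair $(f,w) \colon \Sigma_g \to \CP^3 \times S^2 = Z \times S^2$, where $f$ supplies the directrix and $w$ supplies the varying ruling direction.

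First I would invoke Bryant's construction \cite{bryant1982conformal}, which produces, for every $g \geq 0$, an embedded horizontal pseudo-holomorphic curve $f \colon \Sigma_g \to \CP^3$ with respect to the strict nearly-K\"ahler structure on $\CP^3 = Z$, where $\Sigma_g$ is a compact Riemann surface of genus $g$. Equip $\Sigma_g$ with the complex structure induced by $f$. Since every compact Riemann surface admits non-constant meromorphic functions, choose a non-constant holomorphic map $w \colon \Sigma_g \to S^2 = \CP^1$. By Proposition \ref{prop:HorzHolo}, the pair $(f,w) \colon \Sigma_g \to Z \times S^2$ is then a $J$-holomorphic curve that is nowhere tangent to the fibers of $q \colon Z \times S^2 \to X = S^4$.

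Next, Corollary \ref{cor:MainCorollary} directly yields that $\Gamma(f,w) \subset S^7$ is a smooth compact Hopf-ruled $\varphi_{a,b}$-associative $3$-fold, for any fixed $a,b > 0$. The topology comes from unwinding the construction of $\Gamma$ in $\S$\ref{sec:HopfCircle}: the preimage $\lambda^{-1}((f,w)(\Sigma_g)) \subset \mathcal{B}$ fibers over $\Sigma_g$ with $S^1$-Hopf-circle fibers, so $\Gamma(f,w) \to \Sigma_g$ is an $S^1$-bundle, as claimed.

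The remaining point is non-triviality, and this is the step that requires the most care. By ``trivial'' we mean that $\Gamma(f,w)$ is of the form $p_{w_0}^{-1}(\Sigma)$ for some fixed $w_0 \in S^2$ and pseudo-holomorphic curve $\Sigma \subset \CP^3$; equivalently, $(f,w)(\Sigma_g)$ lies in a slice $\CP^3 \times \{w_0\}$. To rule this out, I would observe that the projection to the second factor of $(f,w)$ is precisely $w$, which was chosen to be non-constant; hence the image of $(f,w)$ in $Z \times S^2$ cannot lie in any single slice $Z \times \{w_0\}$, so $\Gamma(f,w)$ is not of the trivial form. Combining the above with the fact that $g$ was an arbitrary non-negative integer completes the proof. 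The only subtlety I anticipate is being precise about what ``$S^1$-bundle'' means in view of the orbifold structure of $Z$; but since $Z = \CP^3$ is a smooth manifold in the case at hand and the map $p_w$ is a genuine principal $S^1$-bundle, this reduces to a direct unwinding of the definition of $\Gamma$.
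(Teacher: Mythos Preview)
Your proposal is correct and follows essentially the same route as the paper's own proof: invoke Bryant's horizontal pseudo-holomorphic curves $f \colon \Sigma_g \to \CP^3$, pair with a non-constant holomorphic $w \colon \Sigma_g \to S^2$, apply Proposition~\ref{prop:HorzHolo} and Corollary~\ref{cor:MainCorollary} (the paper cites Theorem~\ref{thm:Correspondence}(b) directly, but this amounts to the same thing), and deduce non-triviality from $w$ being non-constant. Your added remarks on the $S^1$-bundle structure and the smoothness of $Z = \CP^3$ are reasonable elaborations but not needed beyond what the paper states.
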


\begin{proof}
\indent Let $\Sigma_g$ be a compact surface of genus $g$.  For each $g \geq 0$, Bryant \cite{bryant1982conformal} has constructed an embedded horizontal, pseudo-holomorphic curve $f \colon \Sigma_g \to \CP^3$.  Equip $\Sigma_g$ with the induced complex structure, and let $w \colon \Sigma_g \to S^2$ be a holomorphic map.  By Proposition \ref{prop:HorzHolo}, the surface $(f,w) \colon \Sigma_g \to Z \times S^2$ is a $J$-holomorphic curve nowhere tangent to an $(S^2 \times S^2)$-fiber.  Thus, for any $a, b >0$, Theorem \ref{thm:Correspondence}(b) implies that $\Gamma(f,w) \subset S^7$ is a $\varphi_{a,b}$-associative $3$-fold diffeomorphic to an $S^1$-bundle over $\Sigma_g$.  If $w \colon \Sigma_g \to S^2$ is chosen to be non-constant, then $\Gamma(f,w) \subset S^7$ is non-trivial.
\end{proof}

\subsection{The Aloff-Wallach Space $N_{1,1}$}

\indent\indent The exceptional Aloff-Wallach space $N_{1,1}$ is the homogeneous space $\left( \SU(3) \times \SO(3) \right)\! / \mathrm{U}(2)$, where the isotropy subgroup $\U(2)$ is the image of the homomorphism $(\rho_1, \rho_2) \colon \mathrm{U}(2) \to \SU(3) \times \SO(3),$ where
\begin{equation*}
        \rho_1 : \mathrm{U}(2) \to \SU(3), \:\:\: A \mapsto \begin{bmatrix}
            \det (A) ^{-1} & 0 \\
            0 & A
        \end{bmatrix},
\end{equation*}
and $\rho_2 \colon \mathrm{U}(2) \to \SO(3) \cong \mathrm{U}(2) / Z(\mathrm{U}(2))$ is the canonical projection.

The space $N_{1,1}$ admits an $(\SU(3) \times \SO(3))$-invariant 3-Sasakian metric, and hence by the results of \S2 and \S3, $N_{1,1}$ admits a two parameter family of co-closed $\G_2$-structures $\varphi_{a,b}$ and a quaternion-Sasakian $\SO(4)$-structure $(\varphi_{1,1}, \mathsf{A})$. The $\G_2$-structure $\varphi_{a,b}$ is nearly parallel if and only if $b^2 = 5 a^2,$ while the $\G_2$-structure $\varphi_{1,1}$ is isometric to a nearly parallel $\G_2$-structure $\widehat{\varphi}$ and induces the normal homogeneous metric on $N_{1,1}$.

Explicitly, we may write the $(\mathfrak{su}(3) \oplus \mathfrak{so}(3))$-valued left-invariant Maurer-Cartan form of $\SU(3) \times \SO(3)$ as
\begin{equation}\label{eq:SUMaurerCartan}
\begin{aligned}
    \sqrt{2} & \begin{bmatrix}
    -i \tfrac{\sqrt{2}}{3} \nu_1 & - \beta_1 + i \beta_2 & -\beta_3 - i \beta_4 \\
    \beta_1 + i \beta_2 & i \tfrac{\sqrt{2}}{6} \nu_1 + i \left( \sqrt{2} \zeta_1 + \tfrac{1}{2} \alpha_1 \right) & \sqrt{2} \left( - \zeta_2 + i \zeta_3 \right) + {\tfrac{1}{2} \left(  -\alpha_2 + i \alpha_3 \right)} \\
    \beta_3 - i \beta_4 & \sqrt{2} \left( \zeta_2 + i \zeta_3 \right) + \tfrac{1}{2} \left( \alpha_2 + i \alpha_3 \right) & i \tfrac{\sqrt{2}}{6} \nu_1 - i \left( \sqrt{2} \zeta_1 + \tfrac{1}{2} \alpha_1 \right)
    \end{bmatrix} \\
    & \oplus \begin{bmatrix}
    0 & 2 \zeta_3 & -2 \zeta_2 \\
    -2 \zeta_3 & 0 & 2 \zeta_1 \\
    2 \zeta_2 & -2 \zeta_1 & 0
    \end{bmatrix}
\end{aligned}
\end{equation}
The vector fields dual to the 1-forms $\nu_1, \zeta_1, \zeta_2, \zeta_3$ generate the isotropy subgroup $\U(2),$ and the 3-form
\begin{equation*}
    \varphi_{1,1} = \alpha_{123} + \alpha_1 \wedge \left(\beta_{12} + \beta_{34} \right) + \alpha_2 \wedge \left(\beta_{13} - \beta_{24} \right) + \alpha_3 \wedge \left(-\beta_{14} - \beta_{23} \right)
\end{equation*}
and distribution
\begin{equation*}
    \mathsf{A} = \ker\!\left( \beta_1, \beta_2, \beta_3, \beta_4 \right)
\end{equation*}
descend to $N_{1,1}$ to define an $(\SU(3) \times \SO(3))$-invariant quaternion-Sasakian $\SO(4)$-structure. The associated 2-parameter family of co-closed $\G_2$-structures is given by
\begin{equation*}
    \varphi_{a,b} = a^3 \, \alpha_{123} + a b^2 \, \left( \alpha_1 \wedge \left(\beta_{12} + \beta_{34} \right) + \alpha_2 \wedge \left(\beta_{13} - \beta_{24} \right) + \alpha_3 \wedge \left(-\beta_{14} - \beta_{23} \right) \right)
\end{equation*}

\subsubsection{Twistor space and canonical fibration}

\indent\indent In the case of the Aloff-Wallach space $N_{1,1}$ the canonical fibration $h \colon M \to X$ is given by the fibration $N_{1,1} \to \mathbb{CP}^2 = \left( \SU(3) \times \SO(3) \right) / \left( \rho_1(\U(2)) \times \SO(3) \right)$. The self-dual Einstein metric $g_X$ on $X$ is simply the Fubini-Study metric on $\mathbb{CP}^2$. In terms of the Maurer-Cartan form (\ref{eq:SUMaurerCartan}) of $\SU(3) \times \SO(3),$ the vector fields dual to the $1$-forms $\nu_1, \zeta_1, \zeta_2, \zeta_3, \alpha_1, \alpha_2, \alpha_3$ generate the $\rho_1(\U(2)) \times \SO(3)$-action, while the $1$-forms $\beta_1, \beta_2, \beta_3, \beta_4$ are semi-basic for the projection from $\SU(3) \to \mathbb{CP}^2.$ The metric $g_X$ is given by
\begin{equation*}
    g_X = \beta_1^2 + \beta_2^2 + \beta_3^2 + \beta_4^2.
\end{equation*}

The twistor space $Z$ of $\mathbb{CP}^2$ is the flag manifold $\SU(3) / \mathrm{T}^2.$ Therefore, the space of Hopf circles in $N_{1,1}$ is diffeomorphic to $\left( \SU(3) / \mathrm{T}^2 \right) \times S^2 = \left( \SU(3) \times \SO(3) \right) / \mathrm{T}^3$. In terms of the Maurer-Cartan form (\ref{eq:SUMaurerCartan}), the $\mathrm{T}^3$-action is generated by the vector fields dual to $\alpha_1,$ $\nu_1,$ and $\zeta_1.$ The almost Hermitian structure $\left( h_{1,1}, J, \omega_{1,1} \right)$ on $\left( \SU(3) / \mathrm{T}^2 \right) \times S^2$ is defined by
\begin{equation*}
    \begin{aligned}
        h_{1,1} &= \alpha_2^2 + \alpha_3^2 + \zeta_2^2 + \zeta_3^2 + 2(\beta_1^2 + \beta_2^2 + \beta_3^2 + \beta_4^2), \\
        \omega &= \alpha_2 \wedge \alpha_3 + \zeta_2 \wedge \zeta_3 + 2(\beta_1 \wedge \beta_2 + \beta_3 \wedge \beta_4).
    \end{aligned}
\end{equation*}

\subsubsection{Pseudo-holomorphic curves in the flag manifold $\mathrm{SU}(3) / \mathrm{T}^2$}\label{sssect:psholflag}

\indent\indent In this subsection, we study pseudoholomorphic curves in the flag manifold $\SU(3) / \mathrm{T}^2$ with the ultimate goal of constructing associative submanifolds of $(N_{1,1}, \varphi_{a,b})$ via Theorem \ref{thm:Correspondence}.

Consider $\C^3$ with the standard hermitian inner product $\langle \cdot, \cdot \rangle.$ Let $Y$ denote the space of complete flags in $\C^3$:
\begin{equation*}
	Y = \left\lbrace \left(\ell, P \right) \mid \ell \in \mathbb{CP}^2 , \: P \in \left( \mathbb{CP}^2 \right)^*, \: \ell \subset P \right\rbrace.
\end{equation*}
Say that a special unitary frame $\left(\mathbf{e}_1, \mathbf{e}_2, \mathbf{e}_3 \right)$ of $\mathbb{C}^3$ is \emph{adapted} to the point $\left(\ell, P \right) \in Y$ if it satisfies the following properties:
\begin{equation*}
	\begin{aligned}
		\ell &= \mathrm{span} \left(\mathbf{e}_1\right), & P &= \mathrm{span} \left(\mathbf{e}_1, \mathbf{e}_2 \right).
	\end{aligned}	
\end{equation*}
The set of adapted coframes may be identified with $\mathrm{SU}(3),$ and this gives an identification of $Y$ with $\SU(3)/\mathrm{T}^2.$ Denote the coset projection $\SU(3) \to \SU(3)/\mathrm{T}^2$ by $\pi$.

There are three $\SU(3)$-equivariant maps $q_i : \SU(3) \to \mathbb{CP}^2,$ each of which factors through a surjection $\tau_i : \SU(3) / \mathrm{T}^2 \to \mathbb{CP}^2$:
\begin{equation*}
	\begin{aligned}
		q_1 : \left(\mathbf{e}_1, \mathbf{e}_2, \mathbf{e}_3 \right) \mapsto \mathrm{span}(\mathbf{e}_1), & & \tau_1 : \left(\ell, P \right) \mapsto \ell, \\
		q_2 : \left(\mathbf{e}_1, \mathbf{e}_2, \mathbf{e}_3 \right) \mapsto \mathrm{span}(\mathbf{e}_2), & & \tau_2 : \left(\ell, P \right) \mapsto P / \ell, \\
		q_3 : \left(\mathbf{e}_1, \mathbf{e}_2, \mathbf{e}_3 \right) \mapsto \mathrm{span}(\mathbf{e}_3), & & \tau_3 : \left(\ell, P \right) \mapsto P^\perp.
	\end{aligned} 
\end{equation*}

Write the (left-invariant) Maurer-Cartan form of $\SU(3)$ in the form
\begin{equation*}
	\gamma = \begin{bmatrix}
		\tfrac{i}{3} \kappa + i \psi & -\overline{\eta_3} & \eta_2 \\
		\eta_3 & \tfrac{i}{3} \kappa - i \psi & -\overline{\eta_1} \\
		-\overline{\eta_2} & \eta_1 & - \tfrac{2i}{3} \kappa
	\end{bmatrix}.
\end{equation*}
Viewing each $\mathbf{e}_i$ as a vector-valued function $\mathbf{e}_i \colon \SU(3) \to \C^3$, we then have the structure equations: 
\begin{equation}\label{eq:SU3framestruct}
	d \begin{bmatrix}
		\mathbf{e}_1 & \mathbf{e}_2 & \mathbf{e}_3 
	\end{bmatrix} = \begin{bmatrix}
	\mathbf{e}_1 & \mathbf{e}_2 & \mathbf{e}_3 
\end{bmatrix} \begin{bmatrix}
		\tfrac{i}{3} \kappa + i \psi & -\overline{\eta_3} & \eta_2 \\
		\eta_3 & \tfrac{i}{3} \kappa - i \psi & -\overline{\eta_1} \\
		-\overline{\eta_2} & \eta_1 & - \tfrac{2i}{3} \kappa
	\end{bmatrix}.
\end{equation}
The Maurer-Cartan equation $d \gamma = - \gamma \wedge \gamma$ on $\SU(3)$ is equivalent to:
\begin{equation}\label{eq:SU3struct}
	\begin{aligned}
		d \eta_1 &= i \left(\kappa - \psi \right) \wedge \eta_1 - \overline{\eta_2 \wedge \eta_3}, & d \kappa &= \tfrac{3i}{2} \left(\eta_1 \wedge \overline{\eta_1} - \eta_2 \wedge \overline{\eta_2}\right), \\
		d \eta_2 &= -i \left( \kappa + \psi \right) \wedge \eta_2 - \overline{\eta_3 \wedge \eta_1}, & d \psi &= \tfrac{i}{2} \left(-\eta_1 \wedge \overline{\eta_1} - \eta_2 \wedge \overline{\eta_2} + 2 \eta_3 \wedge \overline{\eta_3} \right), \\
		d \eta_3 &= 2 i \, \psi \wedge \eta_3 - \overline{\eta_1 \wedge \eta_2}.
	\end{aligned}
\end{equation}
The forms
\begin{equation*}
	\begin{aligned}
		\Omega &= \tfrac{i}{2} \left( \eta_1 \wedge \overline{\eta_1} + \eta_2 \wedge \overline{\eta_2} + \eta_3 \wedge \overline{\eta_3} \right), \:\:\:\: &
		\Upsilon &= \eta_1 \wedge \eta_2 \wedge \eta_3,
	\end{aligned}
\end{equation*}
define the nearly-K\"ahler structure on $\SU(3) / \mathrm{T}^2.$ Let $g_{\mathrm{NK}}$ and $J_{\mathrm{NK}}$ denote the induced Riemannian metric and induced almost complex structure. A 1-form $\theta \in \Omega^1\!\left( \SU(3) / \mathrm{T}^2; \C \right)$ is a $(1,0)$ form if and only if $\pi^* \theta$ lies in the span of $\eta_1, \eta_2, \eta_3.$ Note that while the 1-forms $\eta_i$ do not descend to $Z,$ the subbundles $T_i = \mathrm{span}(\eta_i)$ are well-defined on $\SU(3) / \mathrm{T}^2$ and we have $T^* Z = T_1 \oplus T_2 \oplus T_3$. \\ 

Now suppose that $f\colon \Sigma \to \SU(3) / \mathrm{T}^2$ is a $J_{\mathrm{NK}}$-holomorphic curve. The metric $g_{\mathrm{NK}}$ induces a hermitian structure $h_\Sigma$ on $\Sigma.$ Let $\mathcal{B}(\Sigma) \to \Sigma$ denote the associated $\U(1)$-bundle of unitary coframes on $\Sigma,$ with $\C$-valued tautological 1-form $\sigma \in \Omega^1(\mathcal{B}(\Sigma); \C)$ and Levi-Civita connection form $\phi \in \Omega^1(\mathcal{B}(\Sigma); \mathfrak{u}(1))$. The structure equations of $\mathcal{B}(\Sigma)$ are:
\begin{equation*}
	\begin{aligned}
		d \sigma &= i \phi \wedge \sigma, \:\:\: &		d \phi &= i K \sigma \wedge \overline{\sigma},
	\end{aligned}
\end{equation*}
where $K$ is the Gaussian curvature of $h_\Sigma.$

 Let $\mathcal{F}(\Sigma)$ denote $\mathcal{B}(\Sigma) \times_\Sigma f^* \SU(3)$, a $\mathrm{T}^3$-bundle over $\Sigma.$ The fact that $\Sigma$ is a $J_{\mathrm{NK}}$-holomorphic curve implies by Lemma \ref{lem:holo-char}(b) that there exist $\C$-valued functions $A_1, A_2, A_3$ on $\mathcal{F}(\Sigma)$ such that
 \begin{equation*}
 	\eta_i = A_i \sigma, \:\: i= 1, 2, 3.
 \end{equation*}
The function $A_i$ measures the length of the projection of $T^* \Sigma$ to $T_i.$ Note that
\begin{equation*}
	| A_1 |^2 + |A_2 |^2 + |A_3 |^2 = 1.
\end{equation*}
The structure equations (\ref{eq:SU3struct}) imply that there exist $\C$-valued functions $B_1, B_2, B_3$ on $\mathcal{F}(\Sigma)$ such that
\begin{equation}\label{eq:dAeqns}
	\begin{aligned}
		d A_1 &= B_1 \sigma - i A_1 \phi - i A_1 \psi + i A_1 \kappa, \\
		d A_2 &= B_2 \sigma - i A_2 \phi - i A_2 \psi - i A_2 \kappa, \\
		d A_3 &= B_3 \sigma - i A_3 \phi + 2 i A_3 \psi.
	\end{aligned}
\end{equation}

\begin{prop}
	The cubic form $C = A_1 A_2 A_3 \sigma^3$ descends to $\Sigma$ to define a holomorphic section of the holomorphic line bundle $\mathrm{Sym}^3 \left(T^* \Sigma \right).$
\end{prop}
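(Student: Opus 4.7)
The plan is to verify both conclusions directly from the structure equations~(\ref{eq:dAeqns}) together with $d\sigma = i\phi \wedge \sigma$: $\mathrm{T}^3$-invariance of $C$ will yield descent to a well-defined section on $\Sigma$, while the Koszul--Malgrange criterion will yield holomorphicity. Both steps reduce to a short weight calculation combined with a single application of the product rule.

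For the descent, I would first use Cartan's formula on~(\ref{eq:dAeqns}) to read off the weights $(w_\phi, w_\psi, w_\kappa)$ of the equivariant functions $A_1, A_2, A_3$ under the vertical vector fields dual to $\phi, \psi, \kappa$ on $\mathcal{F}(\Sigma)$:
\begin{equation*}
A_1\colon (-1,-1,+1), \qquad A_2\colon (-1,-1,-1), \qquad A_3\colon (-1,+2,0),
\end{equation*}
while the tautological form $\sigma$ has weights $(+1,0,0)$. Summing yields total weight $(0,0,0)$ for $C = A_1 A_2 A_3\,\sigma^{\otimes 3}$, so $C$ is $\mathrm{T}^3$-invariant. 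Because $A_1 A_2 A_3$ is, in particular, $(\psi,\kappa)$-invariant, it descends to a function on $\mathcal{B}(\Sigma)$ of $\phi$-weight $-3$. The standard correspondence between $\mathrm{U}(1)$-equivariant functions on the unitary frame bundle and sections of powers of the canonical bundle $\mathcal{K} = T^{*1,0}\Sigma$ then identifies $C$ with a well-defined section of $\mathcal{K}^{\otimes 3} = \Sym^3(T^*\Sigma)$.

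For holomorphicity, I would differentiate $A_1 A_2 A_3$ using the product rule and~(\ref{eq:dAeqns}). The crucial observation is that the coefficients of $\psi$ and $\kappa$ cancel exactly --- this is forced by the vanishing of the corresponding weights --- leaving
\begin{equation*}
d(A_1 A_2 A_3) \;=\; \bigl(B_1 A_2 A_3 + A_1 B_2 A_3 + A_1 A_2 B_3\bigr)\,\sigma \;-\; 3i\,A_1 A_2 A_3\,\phi.
\end{equation*}
In particular, $d(A_1 A_2 A_3)$ has no $\overline{\sigma}$-component modulo the connection forms $\phi, \psi, \kappa$. The Koszul--Malgrange criterion, applied with the connection induced by Levi-Civita (whose form on $\mathcal{K}^{\otimes 3}$ is precisely $3i\phi$), then asserts that the section $C$ is holomorphic.

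The argument is essentially bookkeeping and presents no genuine obstacle, but two points demand care. The first is to interpret $\sigma^3$ in the statement as the symmetric (not exterior) tensor cube --- the exterior cube of a complex $1$-form on a real surface vanishes identically. The second is to keep the weight conventions consistent when passing between equivariant functions on $\mathcal{F}(\Sigma)$, equivariant functions on $\mathcal{B}(\Sigma)$, and sections of $\mathcal{K}^{\otimes 3}$ on $\Sigma$. Once those conventions are fixed, the cancellation of the $\psi$- and $\kappa$-terms in $d(A_1 A_2 A_3)$ is automatic from the $\mathrm{T}^3$-invariance of $C$, and holomorphicity follows with no additional computation.
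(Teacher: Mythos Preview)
Your proposal is correct and follows essentially the same approach as the paper: both arguments hinge on the single product-rule computation
\[
d(A_1A_2A_3) = (B_1A_2A_3 + A_1B_2A_3 + A_1A_2B_3)\,\sigma - 3i\,A_1A_2A_3\,\phi,
\]
from which descent and holomorphicity are read off simultaneously. Your version simply makes the underlying weight bookkeeping and the Koszul--Malgrange interpretation explicit, whereas the paper states the conclusions directly from the formula.
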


\begin{proof}
	The equations (\ref{eq:dAeqns}) imply that
	\begin{equation*}
		d( A_1 A_2 A_3) = \left(B_1 A_2 A_3 + B_2 A_3 A_1 + B_3 A_1 A_2 \right) \sigma - 3 i A_1 A_2 A_3 \phi,
	\end{equation*}
	so $A_1 A_2 A_3 \sigma^3$ descends to a well-defined section of  $\mathrm{Sym}^3 \left(T^* \Sigma \right).$ The absence of a $(0,1)$-term in $d(A_1A_2A_3)$ implies this section is holomorphic.
\end{proof}

\begin{defn}
	We shall say a $J$-holomorphic curve $\Sigma \to Z$ is \emph{superholomorphic} if $C$ is identically zero on $\Sigma.$
\end{defn}

\begin{rmk}
	Any rational curve $\mathbb{CP}^1 \to Z$ is superholomorphic, because $\mathbb{CP}^1$ does not support any non-vanishing holomorphic cubic forms.
\end{rmk}

\begin{example}[Lifts of curves in $\mathbb{CP}^2$]\label{eg:liftingcurvesCP}
    The space $\SU(3)$ may also be identified with the Hermitian frame bundle of $\mathbb{CP}^2$. Suppose $f \colon S \to \mathbb{CP}^2$ is a holomorphic curve. We may adapt frames up to a $\mathrm{T}^2$-ambiguity at a point $p \in S$ so that $p = \operatorname{span}(\mathbf{e}_1)$ and $d \mathbf{e}_1 \in \operatorname{span}( \mathbf{e}_1, \mathbf{e}_2 )$ along $S.$ The resulting map $S \to \SU(3) / \mathrm{T}^2$ is known as the \emph{Frenet frame} of $S.$ It is straightfoward to check that each of the three maps $f_i : S \to \SU(3) / \mathrm{T}^2$
    \begin{equation*}
        \begin{aligned}
            & f_1 : \, p \mapsto \left(\mathbf{e}_1 , \operatorname{span}(\mathbf{e}_1, \mathbf{e}_2) \right) \\
           & f_2 : \, p \mapsto \left(\mathbf{e}_2 , \operatorname{span}(\mathbf{e}_2, \mathbf{e}_3) \right) \\
           & f_3 : \, p \mapsto \left(\mathbf{e}_3 , \operatorname{span}(\mathbf{e}_3, \mathbf{e}_1) \right)
        \end{aligned}
    \end{equation*}
    gives a superholomorphic curve $f_i(S) \subset \SU(3) / \mathrm{T}^2$.
$$\begin{tikzcd}
                                            & \mathrm{SU}(3)/\mathrm{T}^2 \arrow[d, "\tau_i"] \\
S \arrow[r, "f"'] \arrow[ru, "f_i", dashed] & \mathbb{CP}^2                                  
\end{tikzcd}$$

\end{example}

\begin{thm}\label{thm:superholom}
    Any superholomorphic curve $\Sigma \subset \SU(3) / \mathrm{T}^2$ arises as the image $f_i(S)$ of a holomorphic curve in $\mathbb{CP}^2$ under one of the three maps $f_i$ defined in Example \ref{eg:liftingcurvesCP}.
\end{thm}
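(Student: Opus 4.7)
The strategy is to first exploit the structure equations (\ref{eq:dAeqns}) to recognise each $A_i$ as a holomorphic section of an associated line bundle over $\Sigma$, thereby forcing the superholomorphic hypothesis to make one $A_i$ vanish identically, and then to translate that vanishing into the geometric condition defining one of the three Frenet lifts $f_i$.

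First I would observe that in (\ref{eq:dAeqns}) each $dA_i$ has no $\overline{\sigma}$-component modulo the connection 1-forms $\phi,\psi,\kappa$ on the principal $\mathrm{T}^3$-bundle $\mathcal{F}(\Sigma)\to\Sigma$. Equipping the three associated complex line bundles $\mathscr{L}_1,\mathscr{L}_2,\mathscr{L}_3\to\Sigma$ with their Koszul--Malgrange holomorphic structures, exactly as in the proof of Theorem \ref{thm:Correspondence}(b), each $A_i$ descends to a holomorphic section of $\mathscr{L}_i$. Consequently $\{A_i=0\}\subset\Sigma$ is either all of $\Sigma$ or a discrete subset.

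Second, the superholomorphic hypothesis says $A_1A_2A_3\equiv 0$, so the three zero sets cover $\Sigma$. Since a connected real surface cannot be covered by three discrete subsets, at least one $A_i$ must vanish identically; after relabelling I may assume $A_2\equiv 0$ on $\Sigma$.

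Third, I would translate this vanishing into geometry using (\ref{eq:SU3framestruct}). The condition $\eta_2|_\Sigma=0$ gives $d\mathbf{e}_1\equiv \mathbf{e}_2\eta_3\pmod{\mathbf{e}_1}$ along $\Sigma$, so $d\mathbf{e}_1\in\operatorname{span}(\mathbf{e}_1,\mathbf{e}_2)$ and the composite $\tau_1\circ f\colon\Sigma\to\mathbb{CP}^2$ is holomorphic. Provided $\tau_1\circ f$ is non-constant (equivalently $A_3\not\equiv 0$), its image $S\subset\mathbb{CP}^2$ is an immersed holomorphic curve, and the flag $(\mathbf{e}_1,\operatorname{span}(\mathbf{e}_1,\mathbf{e}_2))$ carried by $f$ agrees at every point with the osculating flag used to define $f_1(S)$, whence $\Sigma=f_1(S)$. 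The remaining degenerate case $A_2\equiv A_3\equiv 0$ is handled symmetrically: the structure equation for $d\mathbf{e}_2$ then reads $d\mathbf{e}_2\equiv \mathbf{e}_3\eta_1\pmod{\mathbf{e}_2}$, so $\tau_2\circ f$ is a non-constant holomorphic map to $\mathbb{CP}^2$ whose Frenet lift recovers $\Sigma=f_2(\tau_2(\Sigma))$.

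The main obstacle I anticipate lies in the third step: one must confirm that the 2-plane $\operatorname{span}(\mathbf{e}_1,\mathbf{e}_2)$ selected by the $\mathrm{T}^2$-adapted frames of $f$ coincides with the osculating 2-plane of $S$ at every point of $\Sigma$, not merely generically. This is automatic on the open set where $\tau_1\circ f$ is an immersion, since there $\operatorname{span}(\mathbf{e}_1,\mathbf{e}_2)=\operatorname{span}(\mathbf{e}_1,d\mathbf{e}_1)$ by the frame adaptation; the branch points of $\tau_1\circ f$, which form the discrete set $\{A_3=0\}$, require a short continuity argument using the already-established holomorphicity of $A_3$ to extend the flag identification across them.
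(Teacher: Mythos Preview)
Your proposal is correct and follows essentially the same route as the paper: recognise each $A_i$ as a holomorphic section via (\ref{eq:dAeqns}), conclude from $A_1A_2A_3\equiv 0$ that one $A_i$ vanishes identically, and then read off from the structure equations (\ref{eq:SU3framestruct}) that the surviving frame data is precisely the Frenet lift of a projected curve in $\mathbb{CP}^2$. Your treatment is in fact more careful than the paper's terse version---you address the branch-point set $\{A_3=0\}$ and the degenerate case $A_2\equiv A_3\equiv 0$ explicitly, whereas the paper simply invokes the permutation symmetry; note also that your indexing ($A_2\equiv 0 \Rightarrow f_1$) matches the Frenet condition $d\mathbf{e}_1\in\operatorname{span}(\mathbf{e}_1,\mathbf{e}_2)$ directly, while the paper's stated pairing ($A_3\equiv 0 \Rightarrow f_1$) relies on the permutation remark at the end of its proof.
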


\begin{proof}
    Let $\Sigma \subset \SU(3) / \mathrm{T}^2$ be a superholomorphic curve, so that $A_1A_2A_3$ vanishes on $\mathcal{F}(\Sigma).$ By (\ref{eq:dAeqns}), $A_1,$ $A_2,$ and $A_3$ represent holomorphic sections of bundles over $\Sigma,$ so at least one of them must vanish identically on $\mathcal{F}(\Sigma).$
    
    If $A_3$ vanishes identically on $\mathcal{F}(\Sigma),$ the structure equations (\ref{eq:SU3framestruct}) imply that $\tau_1 (\Sigma)$ is a holomorphic curve in $\mathbb{CP}^2$ and $\mathbf{e}_1, \mathbf{e}_2, \mathbf{e}_3$ is its Frenet frame. It follows that $\Sigma = f_1 \left( \tau_1 (\Sigma) \right).$
 
    The calculations are identical in the cases where $A_1$ or $A_2$ vanish identically on $\mathcal{F}(\Sigma),$ up to a permutation of the $\mathbf{e}_i,$ $f_i,$ and $q_i.$
\end{proof}

\begin{cor}\label{cor:confimmsuperholom}
    Fix $i \in \left\lbrace 1, 2, 3 \right\rbrace.$ Every compact Riemann surface can be conformally immersed in $\SU(3) / \mathrm{T}^2$ as a superholomorphic curve with $A_i = 0.$
\end{cor}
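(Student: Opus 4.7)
The plan is to combine Theorem \ref{thm:superholom} with the classical algebraic-geometric fact that every compact Riemann surface admits a holomorphic immersion into $\mathbb{CP}^2$. Theorem \ref{thm:superholom} already characterizes superholomorphic curves in $\SU(3)/\mathrm{T}^2$ as the images $f_j(S)$ of holomorphic curves in $\mathbb{CP}^2$ under the three Frenet lifts, so the corollary reduces to producing such immersed holomorphic curves in $\mathbb{CP}^2$ and then keeping track of which $A_i$ vanishes for each Frenet lift.

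First I would recall the classical existence result: every compact Riemann surface $S$ of genus $g$ admits a holomorphic immersion $f\colon S \to \mathbb{CP}^2$ (with, in general, only nodal self-intersections, which is no obstruction to being a conformal immersion of $S$). For $g=0$ take a line, for $g=1$ a smooth plane cubic, and for $g \geq 2$ embed $S \hookrightarrow \mathbb{CP}^N$ via a sufficiently positive line bundle and then project from a generic $(N-3)$-plane disjoint from $S$ and from its tangent variety. A dimension count shows that for $N$ large the generic center of projection avoids both loci, so the resulting map $S \to \mathbb{CP}^2$ is a holomorphic immersion.

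Given such an $f$, I would lift it to the three Frenet frames $f_1, f_2, f_3 \colon S \to \SU(3)/\mathrm{T}^2$ of Example \ref{eg:liftingcurvesCP}. Because $f$ is a holomorphic immersion, each $f_j$ is well-defined and is itself a holomorphic immersion (the first-order jet of $f$ determines $P$ uniquely), so the pullback of $h_{\mathrm{NK}}$ is a conformal multiple of $h_S$. By the argument at the end of the proof of Theorem \ref{thm:superholom}, the Frenet adaptation forces the component of $d\mathbf{e}_i$ transverse to the osculating plane of $f_j$ to vanish along $f_j(S)$; reading off the structure equations \eqref{eq:SU3framestruct} shows that the resulting vanishing Maurer--Cartan form is precisely one of $\eta_1, \eta_2, \eta_3$, which is to say that one of the functions $A_1, A_2, A_3$ on $\mathcal{F}(f_j(S))$ vanishes identically. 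Permuting the roles of $(\mathbf{e}_1,\mathbf{e}_2,\mathbf{e}_3)$ cyclically as in $f_1, f_2, f_3$ permutes the three options cyclically, so given the chosen $i \in \{1,2,3\}$ in the statement of the corollary I would select the unique $j$ whose Frenet adaptation forces $A_i = 0$.

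The main (and really only) nontrivial input is the classical existence of holomorphic immersions of any compact Riemann surface in $\mathbb{CP}^2$; the rest of the argument is a direct application of Theorem \ref{thm:superholom} together with a bookkeeping check of the structure equations to identify, for each Frenet lift $f_j$, which of the three functions $A_1, A_2, A_3$ vanishes. Once that correspondence is pinned down the proof is immediate.
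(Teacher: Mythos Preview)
Your approach is correct and matches the paper's: the corollary is stated without explicit proof, as it is intended to follow immediately from Example~\ref{eg:liftingcurvesCP} and Theorem~\ref{thm:superholom} together with the classical fact (stated earlier in the paper) that every compact Riemann surface admits an immersed holomorphic image in $\mathbb{CP}^2$. One small tightening: your justification that each lift $f_j$ is an immersion appeals only to well-definedness of the Frenet frame; the cleaner way to see that $f_j$ is actually an immersion is to note that for each $j$ one of the three projections $\tau_k \colon \SU(3)/\mathrm{T}^2 \to \mathbb{CP}^2$ satisfies $\tau_k \circ f_j = f$ (e.g.\ $\tau_1 \circ f_1 = f$, $\tau_3 \circ f_2 = f$, $\tau_2 \circ f_3 = f$), so immersivity of $f$ forces immersivity of $f_j$.
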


\begin{rmk}
    The cubic form $C$ is essentially the same as the cubic form constructed by Chern--Wolfson \cite{chern1983minimal} and Eells--Wood \cite{eells1983harmonic} in the context of minimal surfaces in $\mathbb{CP}^2.$
\end{rmk}

\subsubsection{Associative submanifolds in squashed $N_{1,1}$}

\indent\indent Applying the results of \S\ref{sssect:psholflag} together with Theorem \ref{thm:Correspondence}(b) we are able to construct non-trivial examples of associative submanifolds in $N_{1,1}.$ Here, an associative 3-fold is considered \emph{trivial} if it is of the form $p_w^{-1}(\Sigma)$ for a fixed projection $p_w : N_{1,1} \to \SU(3) / \mathrm{T}^2$ and pseudo-holomorphic curve $\Sigma \subset \SU(3) / \mathrm{T}^2.$

\begin{thm} Fix $a,b > 0$.  For every $g \geq 0$, there exists a non-trivial compact associative $3$-fold in $(N_{1,1}, \varphi_{a,b})$ diffeomorphic to an $S^1$-bundle over a genus $g$ surface.
\end{thm}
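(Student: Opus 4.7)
The plan is to mirror the proof of Theorem \ref{thm:AssocSphere}, with the twistor space $Z = \SU(3)/\mathrm{T}^2$ of $X = \mathbb{CP}^2$ replacing $\mathbb{CP}^3$. Given a compact Riemann surface $\Sigma_g$ of genus $g$, the strategy is to produce an immersed horizontal pseudo-holomorphic lift $\tilde{f} \colon \Sigma_g \to Z$, combine it with a non-constant holomorphic map $w \colon \Sigma_g \to S^2$, and then invoke Proposition \ref{prop:HorzHolo} together with Corollary \ref{cor:MainCorollary} to extract the associative $3$-fold in $N_{1,1}$.

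First, I would realize $\Sigma_g$ as an immersed holomorphic curve $f \colon \Sigma_g \to \mathbb{CP}^2$. This is classical: every compact Riemann surface embeds holomorphically in $\mathbb{CP}^3$, and a generic linear projection yields a holomorphic immersion (possibly with nodes) into $\mathbb{CP}^2$, which after passing to the normalization gives an immersion of $\Sigma_g$. Second, I would take the Frenet lift $\tilde{f} = f_1 \colon \Sigma_g \to Z$ from Example \ref{eg:liftingcurvesCP}, sending $p \mapsto (\mathbf{e}_1, \mathrm{span}(\mathbf{e}_1, \mathbf{e}_2))$ with $\mathbf{e}_1$ spanning $f(p)$ and $\mathbf{e}_2$ pointing along $df(p)$. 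The map $\tilde{f}$ is pseudo-holomorphic by Theorem \ref{thm:superholom}, and the adaptation $d \mathbf{e}_1 \in \mathrm{span}(\mathbf{e}_1, \mathbf{e}_2)$ combined with the structure equations (\ref{eq:SU3framestruct}) forces $\tilde{f}$ to be horizontal for the twistor projection $\tau_1 \colon Z \to \mathbb{CP}^2 = X$.

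Third, I would choose a non-constant holomorphic map $w \colon \Sigma_g \to \mathbb{CP}^1 = S^2$, which exists on any compact Riemann surface by the existence of non-constant meromorphic functions. By Proposition \ref{prop:HorzHolo}, the pair $(\tilde{f}, w) \colon \Sigma_g \to Z \times S^2$ is a $J$-holomorphic curve nowhere tangent to the fibers of $q \colon Z \times S^2 \to X$, and Corollary \ref{cor:MainCorollary} then yields a smooth compact Hopf-ruled $\varphi_{a,b}$-associative $3$-fold $\Gamma(\tilde{f}, w) \subset N_{1,1}$ for every choice of squashing parameters $a, b > 0$. Since $\Gamma(\Sigma) = \pi(\lambda^{-1}(\Sigma))$ and the $\lambda$-fibers are Hopf circles, $\Gamma(\tilde{f}, w)$ is diffeomorphic to an $S^1$-bundle over $\Sigma_g$. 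Non-triviality follows from the non-constancy of $w$: a trivial associative has the form $p_{w_0}^{-1}(S)$ for a single fixed $w_0 \in S^2$, which would require $w$ to be constant.

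The main thing to verify carefully is the horizontality of the Frenet lift, which should fall out of a direct computation identifying the vertical $(1,0)$-forms for $\tau_1$ and checking that they vanish on $\tilde{f}$ using the frame adaptation. A secondary subtlety is ensuring the holomorphic map $f \colon \Sigma_g \to \mathbb{CP}^2$ is genuinely an immersion, which can be handled either by choosing the projection generically or by invoking the discrete removable set $D$ allowed by Theorem \ref{thm:MainCorrespondence}(b).
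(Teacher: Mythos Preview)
Your approach is essentially the paper's: immerse $\Sigma_g$ holomorphically in $\mathbb{CP}^2$, lift to a horizontal pseudo-holomorphic curve in $Z=\SU(3)/\mathrm{T}^2$, pair with a non-constant holomorphic $w\colon\Sigma_g\to S^2$, and apply Proposition~\ref{prop:HorzHolo} together with Corollary~\ref{cor:MainCorollary}. The paper packages the first two steps as Corollary~\ref{cor:confimmsuperholom}, choosing the lift with $A_1=0$.

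There is, however, a genuine (if easily fixed) error in your choice of lift. The vertical $(1,0)$-form for the twistor projection $\tau=\tau_1$ is $\eta_1$: the fibre over $[\mathbf{e}_1]$ consists of planes $P\supset\mathbf{e}_1$, so tangent to the fibre means $\mathbf{e}_2$ moves within $\mathbf{e}_1^\perp$ toward $\mathbf{e}_3$, which is exactly the $\eta_1$-direction in (\ref{eq:SU3framestruct}). Thus horizontality for $\tau_1$ is the condition $A_1=0$. But the Frenet lift $f_1$ does \emph{not} satisfy this in general: the adaptation $d\mathbf{e}_1\in\mathrm{span}(\mathbf{e}_1,\mathbf{e}_2)$ forces only $\eta_2=0$, and for any curve whose tangent line varies (e.g.\ a conic) one has $\eta_1\neq 0$ along $f_1(S)$, since $d\mathbf{e}_2$ must have a nonzero $\mathbf{e}_3$-component. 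So $f_1$ is a section of $\tau_1$ over $f(S)$, but not a horizontal one. The remedy is simply to use the lift $f_j$ that yields $A_1=0$ (guaranteed by Corollary~\ref{cor:confimmsuperholom}); the rest of your argument then goes through verbatim.
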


\begin{proof}
    Let $\Sigma_g$ be a compact surface of genus $g$.  For each $g \geq 0$, by Corollary \ref{cor:confimmsuperholom}, there exists an immersed pseudo-holomorphic curve $f \colon \Sigma_g \to \SU(3) / \mathrm{T}^2$ with $A_1 = 0$. By virtue of the condition $A_1 = 0,$ $f$ is horizontal for the twistor projection $\SU(3) / \mathrm{T}^2 \to \mathbb{CP}^2.$ Equip $\Sigma_g$ with the induced complex structure, and let $w \colon \Sigma_g \to S^2$ be a holomorphic map.  By Proposition \ref{prop:HorzHolo}, the surface $(f,w) \colon \Sigma_g \to \left( \SU(3) / \mathrm{T}^2 \right) \times S^2$ is a $J$-holomorphic curve nowhere tangent to an $(S^2 \times S^2)$-fiber.  Thus, for any $a, b >0$, Theorem \ref{thm:Correspondence}(b) implies that $\Gamma(f,w) \subset N_{1,1}$ is a $\varphi_{a,b}$-associative $3$-fold diffeomorphic to an $S^1$-bundle over $\Sigma_g$.  If $w \colon \Sigma_g \to S^2$ is chosen to be non-constant, then $\Gamma(f,w) \subset N_{1,1}$ is non-trivial.
\end{proof}

\bibliographystyle{plain}
\bibliography{FBRef}

\Addresses

\end{document}